\documentclass[11pt]{article}
\usepackage[margin=0.86in]{geometry}
\usepackage{amsmath,amssymb,amsfonts,amsthm}
\usepackage{graphicx,booktabs,array,tikz,needspace}
\usepackage[numbers]{natbib}
\usepackage[colorlinks=true,linkcolor=blue,citecolor=green,urlcolor=black,
            hypertexnames=false]{hyperref}
\usepackage{microtype}
\hypersetup{
  pdftitle={Exact Simulation of Multivariate Diffusions and Bridges},
  pdfauthor={Jose Blanchet, Jikai Jin, Hao Liu},
  pdfkeywords={exact simulation, diffusion bridge, stochastic differential equations, Monte Carlo method, complexity}
}

\newcounter{algorithm}
\newenvironment{algorithm}[1][]{\par\addvspace{\medskipamount}\noindent\begin{minipage}{\linewidth}\refstepcounter{algorithm}}{\end{minipage}\par\addvspace{\medskipamount}}
\newcommand{\algcaption}[1]{\noindent\textbf{Algorithm \thealgorithm. #1}\par\nopagebreak[4]\smallskip\nopagebreak[4]}
\newcommand{\alginout}[2]{\noindent\textit{Input:} #1\par\nopagebreak[4]\noindent\textit{Output:} #2\par\nopagebreak[4]\smallskip\nopagebreak[4]}
\newcommand{\E}{\mathbb{E}}
\newcommand{\Pp}{\mathbb{P}}
\newcommand{\R}{\mathbb{R}}
\newcommand{\Id}{I_d}
\newcommand{\dd}{\,\mathrm{d}}
\newcommand{\ind}{\mathbf{1}}
\newcommand{\calN}{\mathcal{N}}
\newcommand{\calL}{\mathcal{L}}

\newcommand{\Wk}{\mathcal{W}}
\newcommand{\Mc}{\Gamma_{\mathrm{coef}}}
\newcommand{\TC}{\operatorname{TransitionCoin}}
\theoremstyle{plain}
\newtheorem{theorem}{Theorem}[section]
\newtheorem{proposition}[theorem]{Proposition}
\newtheorem{lemma}[theorem]{Lemma}

\theoremstyle{definition}
\newtheorem{definition}[theorem]{Definition}
\newtheorem{assumption}[theorem]{Assumption}
\newtheorem{remark}[theorem]{Remark}

\title{Exact Simulation of Multivariate Diffusions and Bridges}

\author{%
Jose Blanchet%
\thanks{The authors are listed in alphabetical order.}%
\textsuperscript{\phantom{*},}%
\thanks{Department of Management Science and Engineering,
Stanford University. Email: \texttt{jose.blanchet@stanford.edu}}
\and
Jikai Jin%
\footnotemark[1]\textsuperscript{\phantom{*},}%
\thanks{Institute for Computational and Mathematical Engineering,
Stanford University. Email: \texttt{jkjin@stanford.edu}}
\and
Hao Liu%
\footnotemark[1]\textsuperscript{\phantom{*},}%
\thanks{Department of Management Science and Engineering,
Stanford University. Email: \texttt{haoliu20@stanford.edu}}
}
\date{\today}

\begin{document}
\maketitle

\begin{abstract}
We provide the first generic exact sampling
algorithms with finite expected total cost for uniformly elliptic
multivariate diffusions and the corresponding diffusion bridges with bounded coefficients and bounded continuous first derivatives. We formulate a computational cost model that counts work in terms of generation of Gaussian and uniform random variables to execute acceptance--rejection sampling, as well as function evaluations of the drift and diffusion coefficients, together with elementary operations such as sums and multiplications. Our constructions combine randomized
parametrix expansions with a finite weighted graph that
incorporates both bridge endpoints into the proposal. The exact sampling of the diffusion
is minimax optimal jointly in the horizon and number of requested
observations, while bridge sampling is minimax optimal in the
number of observations for fixed horizon and endpoints.
\end{abstract}

\medskip
\noindent\textbf{MSC2020 subject classifications.}
Primary 65C05; secondary 60H10, 60J60, 65C30, 68Q25.

\smallskip
\noindent\textbf{Keywords and phrases.}
Exact simulation, diffusion bridge, stochastic differential equations,
Monte Carlo method, complexity.

\tableofcontents

\section{Introduction}\label{sec:introduction}

Exact simulation of a diffusion seeks to generate the exact sample paths of the process. In this paper, we consider the generation of the values of the process at prescribed
times with their joint distribution, without any bias. Exact samples yield Monte Carlo estimates without time-discretization
bias. They also preserve useful structure: nonnegative functions
yield nonnegative estimates, and functions convex in a decision
variable for every state yield convex sample-average
objectives.
Beskos and Roberts~\citep{BeskosRoberts2005} showed how to generate exact samples from one-dimensional diffusions using acceptance--rejection sampling. The key idea is to reduce the sampling problem to the case of diffusions with constant volatility / diffusion coefficient. This, however, can only be done in generality in the one-dimensional case using the so-called Lamperti transformation. The key idea in the Beskos-Roberts algorithm is to propose samples from the Brownian measure and apply It\^o's lemma to transform the stochastic integral that appears in the Girsanov transformation into a Riemann integral. Then, under suitable boundedness assumptions on the drift and its first and second derivatives, the acceptance step can be interpreted in terms of Poisson thinning. Subsequently, the boundedness assumptions were removed by localization and related techniques in the work of retrospective simulation
\citep{BeskosPapaspiliopoulosRoberts2006}, layered Brownian bridges
\citep{BeskosPapaspiliopoulosRoberts2008} and localization
\citep{ChenHuang2013}. 

However, what remains key in these techniques are two essential features. First, the ability to reduce the exact sampling problem to one involving a diffusion with a constant volatility matrix, and second being able to represent the Girsanov likelihood ratio in terms of a Riemann integral. The first feature creates substantial limitations in multivariate
settings~\citep{AitSahalia2008}, and even if the target diffusion
itself has constant volatility, the second feature further restricts
the drift to be a gradient vector
field~\citep{BeskosPapaspiliopoulosRoberts2008}.

The work of Blanchet and Zhang~\citep{BlanchetZhang2020} developed a
different approach that samples multivariate diffusion skeletons without
these restrictions, combining tolerance-enforced simulation \citep{BlanchetChenDong2017}, rough-path
estimates and randomized multilevel ideas. Their algorithms terminate
almost surely but have infinite expected computational cost. Thus the existence
of an exact sampler leaves a further question about the computational
resources needed to use it. 

Given these limitations, it is natural to ask: \emph{can exact simulation of multivariate diffusions with general time- and state-dependent coefficients be achieved with finite expected total computational cost?}

One of the main contributions of this paper is to formulate a \emph{query-only} computational model for exact
diffusion simulation and construct the first generic exact samplers with finite expected
total work. The algorithms know the coefficient class bounds and
access the coefficients only through scalar point evaluations. By \emph{query-only} we mean that we count all operations, including preprocessing,
coefficient queries, auxiliary randomization, rejected proposals
and output --- but we do not assume we have access to the transition density. We consider uniformly elliptic multivariate diffusions
with globally bounded, time- and state-dependent coefficients and
bounded continuous derivatives through a fixed order $k\ge1$.

Further, some applications of diffusion sampling, in particular
those involving parametric inference of diffusion models, motivate
the study of a multivariate diffusion conditioned on its terminal
value at a particular time. These processes are known as diffusion
bridges. For exact and other bridge-based approaches to inference
for discretely observed diffusions, see Beskos, Papaspiliopoulos,
Roberts and
Fearnhead~\citep{BeskosPapaspiliopoulosRobertsFearnhead2006},
Delyon and Hu~\citep{DelyonHu2006}, Bladt and
S{\o}rensen~\citep{BladtSorensen2014}, and Schauer, van der Meulen
and van Zanten~\citep{SchauerVanderMeulenVanZanten2017}. Bladt, Finch and S{\o}rensen~\citep{BladtFinchSorensen2016}
develop coupling-based proposals and MCMC methods for
time-homogeneous ergodic diffusion bridges. Their MCMC
construction targets the bridge law as its invariant distribution.
The present work allows time-dependent coefficients and does not
require ergodicity, under the boundedness and regularity assumptions
specified below. It also requires exactness of the sample returned
at termination and accounts for the expected total work needed
to obtain it.

Sampling from multivariate diffusion bridges with general
time- and state-dependent coefficients is challenging because they are defined on a conditioning event having null probability. Therefore, the exact sampling algorithms discussed earlier cannot be directly used for exact bridge diffusion sampling --- unless we are once again in the setting of constant volatility and gradient drift vector fields; in those cases, exact bridge diffusion sampling can be adapted from the exact (unconditional) sampling techniques discussed earlier. So, given this further application, it is also natural to ask: \emph{Can we perform exact simulation of multivariate diffusion bridges with general time- and state-dependent coefficients within finite expected total computational cost?}

The second main contribution of this paper is to answer this question in the affirmative. 

Moreover, we further prove that the unconditional exact sampling construction is minimax optimal jointly in the terminal sampling time horizon $T$ and the number of sampled observations $m$, while the bridge construction is minimax optimal in $m$ for fixed horizon and endpoints. By minimax optimality, we mean achieving, up to constant factors, the smallest worst-case expected total work among all exact
algorithms in the same computational model. The worst case is
taken over all diffusion coefficients satisfying the prescribed
coefficient, derivative and ellipticity bounds, and all admissible
grids of $m$ observation times.

Linear horizon bounds for computational cost already appear in
exact simulation based on successive short intervals
\citep{BeskosRoberts2005,BeskosPapaspiliopoulosRoberts2006}.
Our unconditional sampling result establishes linear total expected work for the present multivariate coefficient class in a query-only model,
together with a matching information lower bound. The minimax
analysis is related to information-based complexity
\citep{TraubWasilkowskiWozniakowski1988} and optimal approximation
of stochastic differential equations
\citep{HofmannMullerGronbachRitter2000,MullerGronbach2002}.

The unconditional exact sampling construction follows a natural strategy: simulate
successive short transitions, subdividing any requested time gap that
exceeds a permitted duration, computed from the assumed bounds on the process coefficients. Each short transition is sampled by
Gaussian rejection. Although the transition density is unavailable,
the acceptance decision can be implemented by a so-called \emph{transition
coin}, a binary random variable whose mean is the density divided by
a known Gaussian envelope. Constructing this coin with uniformly
bounded expected work makes the short-transition strategy executable
and gives the stated total-work bound.

For bridges, the same short-transition machinery suggests generating
an exact diffusion prefix and accepting it according to its final
transition density to the prescribed endpoint. This construction
provides a natural way to impose the terminal condition, but its
expected work can grow exponentially with the horizon. Yet another important contribution of this paper is to give a construction that controls this conditioning cost by incorporating both endpoints into the proposal through a finite weighted graph of spatial cells and time layers. A graph path
selects a sequence of cells, within which continuous states are
proposed and accepted by rejection sampling. Additional proposal
components cover large increments and spatial exits, preserving
the full bridge law. For fixed endpoints and a fixed number of
requested observations, the expected work grows at most
polynomially as the horizon increases. Related uses of graphs to represent and sample stochastic
trajectories include stochastic roadmap simulation for molecular
motion \citep{ApaydinEtAl2003} and transition-path sampling on
Markovian networks \citep{SharpeWales2020}. 

Our coin constructions use randomized parametrix expansions to
obtain unbiased estimators of density ratios, together with pathwise
bounds that turn these estimators into valid acceptance probabilities.
This approach is related to probabilistic parametrix representations
and unbiased diffusion estimation
\citep{BallyKohatsuHiga2015,AnderssonKohatsuHiga2017,HenryLabordereTanTouzi2017},
as well as randomized debiasing
\citep{McLeish2011,RheeGlynn2015}. The bridge construction also uses
Bernoulli factories, including the linear factory of
Huber~\citep{Huber2016}, and exact sampling from unknown weights.
Related methods for the latter problem appear in Dughmi, Hartline,
Kleinberg and Niazadeh~\citep{DughmiHartlineKleinbergNiazadeh2017}.

Unbiased estimation provides another way to eliminate
time-discretization bias. Randomized debiasing methods construct
estimators of specified expectations
\citep{McLeish2011,RheeGlynn2015,BlanchetGlynn2015}, without necessarily generating
samples from the target distribution.
Glynn~\citep{Glynn2016ExactEstimation} discusses unbiased estimation
as a useful relaxation of exact simulation, particularly when
sampling from the target law is difficult. The distinction also
matters for applications: unbiasedness alone does not guarantee
the structural properties of direct sample averages described
above, as discussed by Blanchet and
Zhang~\citep{BlanchetZhang2020}. For a further perspective on exact
simulation of diffusions and its open computational questions,
see Blanchet~\citep{Blanchet2022OpenProblems}.

The rest of the paper is organized as follows. Section~\ref{sec:setup} specifies the coefficient class, sampling
targets and computational model. Sections~\ref{sec:forward}
and~\ref{poly:section} present the unconditional diffusion and the bridge constructions,
respectively. Section~\ref{sec:minimax} gives the minimax guarantees,
and Section~\ref{sec:discussion} concludes and discusses further questions.
Appendix~\ref{app:accounting} records the work details, and the remaining appendices give the detailed
constructions and proofs.

\section{Sampling problems and computational model}\label{sec:setup}\label{cx:section}

We first specify the coefficient
class and these two target laws, then describe the information available
to the algorithm and the work charged for using it.

\subsection{The diffusion and its coefficients}

Let $X$ solve the $d$-dimensional stochastic differential equation
\begin{equation*}
 \dd X_t=b(t,X_t)\dd t+\sigma(t,X_t)\dd W_t,\qquad X_0=x,
 \quad a=\sigma\sigma^\top,
\end{equation*}
where $W$ is a standard $d$-dimensional Brownian motion. For vectors,
$|\cdot|$ denotes the Euclidean norm. For matrices,
$P:Q=\operatorname{tr}(P^\top Q)$ and $P\preceq Q$ means that $Q-P$
is positive semidefinite; $\Id$ is the $d\times d$ identity.
The space $C_b^k$ consists of functions with bounded continuous partial
derivatives through total order $k$, including order zero.

The constructions below compare short diffusion transitions with Gaussian
kernels. We impose uniform bounds on the coefficients and their
derivatives, together with uniform ellipticity. We use the following assumption
throughout the paper.

\begin{assumption}\label{ass:class}
The integers $d,k\ge1$, the bounds $\Mc>0$ and $0<\lambda\le\Lambda$,
and the horizon $T>0$ are known to the algorithm. Every component of
$b$ and $\sigma$ belongs to $C_b^k([0,T]\times\R^d)$, every time--space
derivative of total order at most $k$, including order zero, has absolute
value at most $\Mc$, and
\[
 \lambda\Id\preceq a(t,z)\preceq\Lambda\Id,
 \qquad (t,z)\in[0,T]\times\R^d.
\]
Derivatives at the time endpoints have continuous one-sided extensions.
\end{assumption}

For fixed $d,k,\Mc,\lambda,\Lambda$, let $\mathcal F_T$ denote the full
coefficient class satisfying Assumption~\ref{ass:class}; we assume this
class is nonempty. 
These assumptions imply existence and pathwise uniqueness of a
nonexplosive strong solution \citep{KaratzasShreve1991}. The diffusion
also has a strictly positive, jointly continuous transition density
$p(s,u;t,v)$ satisfying the Chapman--Kolmogorov identity pointwise.
We use this continuous density version to specify both sampling targets.

\subsection{The unconditional and bridge targets}

For $0=t_0<t_1<\cdots<t_m\le T$, the \emph{unconditional target} is the joint
law of $(X_{t_1},\ldots,X_{t_m})$ for the initial state $X_0=x$.
The Markov property expresses its density as the product of successive
transition densities:
\begin{equation}\label{eq:forwardtarget}
 \prod_{i=1}^m p(t_{i-1},z_{i-1};t_i,z_i),\qquad z_0=x.
\end{equation}
Thus an exact sampler for individual transitions can be applied
successively to produce the requested vector.

For bridge sampling, both $X_0=x$ and $X_T=y$ are fixed, and the requested
times satisfy $0=t_0<t_1<\cdots<t_m<t_{m+1}=T$. Conditioning the joint
density of the intermediate states and the terminal state gives the
\emph{bridge target}
\begin{equation}\label{eq:bridgetarget}
 \frac{\prod_{i=1}^{m+1}p(t_{i-1},z_{i-1};t_i,z_i)}{p(0,x;T,y)},
 \qquad z_0=x,\quad z_{m+1}=y.
\end{equation}

To describe the constructions, we call $e=(s,u;t,v)$, $s<t$, an
\emph{edge}: it records a transition between the indicated time--state
pairs. A \emph{coin} with success probability $p$ is a $\{0,1\}$-valued
random variable of mean $p$. For known $p$, such a coin is generated by
comparison with a fresh uniform random variable.

\subsection{Information and work}\label{sec:model}
For coefficients satisfying Assumption~\ref{ass:class}, the algorithm
receives the class bounds and accesses $b$ and $\sigma$ through point
queries to their scalar components. Queries may depend on previous
answers and random draws. Coefficient derivatives, formulas, integrals
and transition densities are not supplied. An algorithm whose coefficient
access is restricted to these queries is called \emph{query-only}.

To state a work guarantee, we must also account for the computation
between queries. We use an exact-real model with fixed positive finite
costs for scalar arithmetic and comparisons, square roots, exponentials,
logarithms, coefficient queries, independent scalar Gaussian and uniform
draws, memory operations and output writes. Known constants such as $\pi$
are available. The work $\Wk$ counts preprocessing, rejected proposals,
recursive calls and all auxiliary computation. Matrix calculations and
random-length loops are charged for their scalar operations.
Appendix~\ref{app:accounting} gives the detailed conventions, in the
spirit of real-number computation and nonuniform random generation
\citep{BlumShubSmale1989,Devroye1986}. Unless stated otherwise, constants in work bounds may depend on the fixed
class parameters and primitive costs.

The unconditional sampling construction in Section~\ref{sec:forward} develops an exact
short-transition sampler from coefficient values. Section~\ref{poly:section}
uses the same transition coins, together with proposals that incorporate
the prescribed terminal state, to sample bridges.

\section{Exact diffusion sampling}\label{sec:forward}\label{core:section}

Recall the unconditional target \eqref{eq:forwardtarget}. Its product form
suggests sampling successive transitions, subdividing a requested time
gap whenever it is too long. The main task is therefore to sample one
short transition using only coefficient values. We do this by Gaussian
rejection sampling. Although the transition density cannot be evaluated,
its acceptance probability can be implemented by a coin.

For $c,h>0$, write
\[
 G_{c,h}(w)=(2\pi ch)^{-d/2}\exp\{-|w|^2/(2ch)\}.
\]
We construct a computable duration $h_{\mathrm{env}}>0$ and a Gaussian
envelope $E_h=K_{\mathrm{rej}}G_{4\kappa/3,h}$, where $\kappa=2\Lambda$,
together with a routine $\TC(s,u;t,v)$ of mean
$p(s,u;t,v)/E_{t-s}(v-u)$. Its expected work is uniformly bounded for
$0<t-s\le h_{\mathrm{env}}$, including arbitrary endpoint displacements.
The construction starts with a randomized parametrix expansion, produces
a local coin, and then extends it by a midpoint recursion. Detailed
implementations and proofs are given in Appendix~\ref{app:forward-proofs}.

\subsection{Local randomization}\label{sec:parametrixexpansion}

For an edge $e=(s,u;t,v)$, let $h=t-s$, $w=v-u$ and $A=a(t,v)$.
Freezing the covariance at the terminal point gives the explicit kernel
\[
 g(e)=\frac{\exp\{-w^\top A^{-1}w/(2h)\}}
 {(2\pi h)^{d/2}\sqrt{\det A}}.
\]
Its defect under the backward diffusion operator
$\calL_s=b(s,u)\cdot\nabla+\tfrac12a(s,u):D^2$ is
\begin{equation}\label{core:defect}
 \begin{aligned}
 H(e)&=(\partial_s+\calL_s)g(e)
 =g(e)\left[b(s,u)\cdot\ell+
 \tfrac12\{a(s,u)-A\}:\left(\ell\ell^\top-A^{-1}/h\right)\right],
 \qquad \ell=A^{-1}w/h.
 \end{aligned}
\end{equation}
Thus both $g$ and $H$ are evaluated from coefficient values. For kernels
$F,H$, define their time--space convolution by
\[
 (F\otimes H)(s,u;t,v)=\int_s^t\int_{\R^d}
 F(s,u;r,z)H(r,z;t,v)\dd z\dd r.
\]
The parametrix representation is the absolutely convergent series
\begin{equation}\label{core:parametrix-series}
 p(e)=g(e)+\sum_{j\ge1}(g\otimes H^{\otimes j})(e).
\end{equation}
For $0<h\le1$, computable class constants satisfy
\[
 g(e)\le C_gG_{\kappa,h}(w),\qquad
 |H(e)|\le C_Hh^{-1/2}G_{\kappa,h}(w),\qquad
 R(e):=\frac{p(e)}{G_{\kappa,h}(w)}\le C_p^*.
\]
The proof of these bounds can be found in Section \ref{sec:B.1}.

To implement the series, we randomize the times and intermediate states
in one convolution term. For each $j\ge1$, we can generate $\Upsilon_j$ using
$O(j+1)$ work (details see Section \ref{sec:B.2}). With
$\mathsf Z_j(h)=\pi^{j/2}h^{j/2}/\Gamma(1+j/2)$ (where $\Gamma$ denotes the gamma function), it satisfies
\begin{equation}\label{core:Tnproperties}
 \E\Upsilon_j=\frac{(g\otimes H^{\otimes j})(e)}{G_{\kappa,h}(w)},
 \qquad
 |\Upsilon_j|\le C_gC_H^j\mathsf Z_j(h).
\end{equation}
The random times absorb the inverse square-root singularities in the
defect bounds, and a Gaussian bridge supplies the intermediate states.
The normalizers are computed by a recurrence, so no Gamma-function
primitive is required.

Draw $J\ge1$ with $\Pp(J=j)=2^{-j}$ and generate $\Upsilon_J$
conditionally on $J$. With $r_0=g(e)/G_{\kappa,h}(w)$, we define the local estimator $\widehat R_{\mathrm{loc}}=r_0+2^J\Upsilon_J.$
Therefore, $\E\widehat R_{\mathrm{loc}}=R(e)$. Its expected work is
uniformly bounded since $\E J=2$. The correction may have either
sign, but on short edges with nearby endpoints the leading term
dominates every realization. Specifically, for each $L\ge1$ there is
a computable $h(L)>0$ such that $h\le h(L)$ and $|w|\le L\sqrt h$
imply
\begin{equation}\label{core:localbounds}
 \tfrac34r_0\le\widehat R_{\mathrm{loc}}\le\tfrac54r_0
 <B_{\mathrm{loc}}:=\tfrac32 C_g
 \quad\text{almost surely}.
\end{equation}
Consequently, an independent uniform comparison with a normalized
version of this estimator is a valid coin. Lemma~\ref{core:localpositive}
gives the explicit choice of $h(L)$.

\subsection{A coin for arbitrary endpoints}\label{core:spatialsection}

The local construction does not cover all states produced by a Gaussian
proposal. To extend this to arbitrary endpoints, we denote $\zeta=w/\sqrt h$ and $\bar s=(s+t)/2$.
Under the Gaussian midpoint law
$Z=(u+v)/2+\sqrt{\kappa h/4}\,\xi$, $\xi\sim\calN(0,\Id)$,
the Chapman--Kolmogorov identity gives
\begin{equation}\label{core:midpointidentity}
 R(e)=\E[R(e_1)R(e_2)],\qquad
 e_1=(s,u;\bar s,Z),\quad e_2=(\bar s,Z;t,v).
\end{equation}
The normalized displacements of the two new edges are
\[
 \zeta_1=\frac{\zeta}{\sqrt2}+\sqrt{\kappa/2}\,\xi,
 \qquad
 \zeta_2=\frac{\zeta}{\sqrt2}-\sqrt{\kappa/2}\,\xi.
\]
We choose an envelope of the form
\begin{equation}\label{core:spatial-envelope}
 C(\zeta)=b_{\mathrm{env}}+k_{\mathrm{env}}e^{a_0|\zeta|^2},
 \qquad a_0=(8\kappa)^{-1},
 \qquad S(\zeta)=\E[C(\zeta_1)C(\zeta_2)].
\end{equation}
Both $C$ and $S$ are explicitly computable. The constants
$b_{\mathrm{env}},k_{\mathrm{env}},L_{\mathrm{env}}$ can be chosen
from the class bounds so that, with
$h_{\mathrm{env}}=h(L_{\mathrm{env}})$,
\[
 R(e)\le C(\zeta),\qquad
 0\le\widehat R_{\mathrm{loc}}\le C(\zeta)
 \ \text{if }|\zeta|\le L_{\mathrm{env}},\qquad
 \frac{S(\zeta)}{C(\zeta)}\le\frac14
 \ \text{if }|\zeta|>L_{\mathrm{env}}.
\]
These inequalities hold for every $0<h\le h_{\mathrm{env}}$;
their calibration is given in Section~\ref{supp:spatialproof}.
This construction leads to the following recursive coin of mean $R(e)/C(\zeta)$.

\begin{algorithm}
\algcaption{$\operatorname{SpatialCoin}(s,u;t,v)$}
\label{core:spatialalgorithm}
\alginout{$e=(s,u;t,v)$ with $0<t-s\le h_{\mathrm{env}}$.}
{a coin of mean $R(e)/C((v-u)/\sqrt{t-s})$.}
All draws use fresh randomness conditional on the current inputs.
\begin{enumerate}
\item Set $h=t-s$ and $\zeta=(v-u)/\sqrt h$.
\item If $|\zeta|\le L_{\mathrm{env}}$, generate
$\widehat R_{\mathrm{loc}}$ and an independent uniform random variable $U$.
Return $\ind\{U\le\widehat R_{\mathrm{loc}}/C(\zeta)\}$.
\item Otherwise, draw a uniform random variable $U_0$. If
$U_0>S(\zeta)/C(\zeta)$, return zero.
\item Draw $\xi$ with density
\[
 \widetilde f_\zeta(\xi)=
 \frac{C(\zeta_1)C(\zeta_2)}{S(\zeta)}
 (2\pi)^{-d/2}e^{-|\xi|^2/2},
\]
and set $\bar s=(s+t)/2$ and
$Z=(u+v)/2+\sqrt{\kappa h/4}\,\xi$.
\item Call $\operatorname{SpatialCoin}(s,u;\bar s,Z)$ and
$\operatorname{SpatialCoin}(\bar s,Z;t,v)$ independently conditional
on $Z$, and return the product of their outputs.
\end{enumerate}
\end{algorithm}

The tilted midpoint density $\widetilde f_\zeta$ is a four-component Gaussian mixture (see Section \ref{supp:spatialproof}).
Its weighting cancels the two envelope denominators in the recursive
coin probabilities, recovering \eqref{core:midpointidentity}. A call
makes two further calls with probability at most $1/4$, so the expected
total number of calls is at most two. Section \ref{supp:spatialproof}
establishes the claimed mean, almost-sure termination and uniformly
bounded expected work.

\subsection{Short transitions and the requested skeleton}
\label{core:shorttransitionalgorithm}

To obtain a Gaussian rejection envelope, set
$A_0=b_{\mathrm{env}}+k_{\mathrm{env}}$. Since
$C(\zeta)\le A_0e^{a_0|\zeta|^2}$, define
\begin{equation}\label{core:gaussianenvelope}
 E_h(w)=A_0G_{\kappa,h}(w)e^{a_0|w|^2/h}
       =K_{\mathrm{rej}}G_{4\kappa/3,h}(w),
 \qquad K_{\mathrm{rej}}=A_0(4/3)^{d/2}.
\end{equation}
Generate $B_{\mathrm{sp}}=\operatorname{SpatialCoin}(s,u;t,v)$
and an independent $U\sim\operatorname{Uniform}(0,1)$, and define another coin via
\begin{equation*}
 \TC(e)=B_{\mathrm{sp}}
 \ind\left\{U\le\frac{C(\zeta)}{A_0e^{a_0|\zeta|^2}}\right\}.
\end{equation*}
The additional comparison adjusts the normalization, giving
\begin{equation}\label{core:shortcoinlaw}
 \Pp\{\TC(e)=1\}=\frac{p(e)}{E_h(v-u)}.
\end{equation}
The extra work is bounded, so the transition coin retains the uniform
expected-work guarantee; see the details in Lemma~\ref{core:shortcoin}.

The \emph{short-transition algorithm} from $(s,u)$ to time $t$, with
$0<h=t-s\le h_{\mathrm{env}}$, now has a simple form: propose
$V\sim\calN(u,(4\kappa/3)h\Id)$ and accept if a fresh
$\TC(s,u;t,V)$ returns one. On failure, repeat the entire trial
with fresh randomness. The proposal density multiplied by the
acceptance probability is $p(s,u;t,v)/K_{\mathrm{rej}}$. Hence a
trial succeeds with probability $1/K_{\mathrm{rej}}$, and its
accepted state has the required transition law. This short-transition algorithm has
uniformly bounded expected work by Lemma~\ref{core:shorttransition}.

For the requested grid, subdivide each gap into
\begin{equation}\label{fw:counts}
 n_i=\left\lceil(t_i-t_{i-1})/h_{\mathrm{env}}\right\rceil
 \quad\text{equal steps},\qquad
 N_{\mathrm{short}}=\sum_{i=1}^m n_i
 \le m+t_m/h_{\mathrm{env}}.
\end{equation}
Then every resulting duration is admissible. An accepted state is retained
while the next transition is sampled, so a rejection repeats only the
current short-transition trial. Now we are ready to present the exact diffusion sampling algorithm.

\begin{algorithm}
\algcaption{Unconditional diffusion skeleton}\label{fw:algorithm}
\alginout{an initial state $x$ and times $0=t_0<t_1<\cdots<t_m\le T$.}
{a vector with density \eqref{eq:forwardtarget}.}
\begin{enumerate}
\item Calibrate the short-transition constants from the class bounds
and set the current time and state to $(0,x)$.
\item For $i=1,\ldots,m$, subdivide $[t_{i-1},t_i]$ as in
\eqref{fw:counts}. On each step, apply the short-transition routine
from the current time and state, then replace them by the step's
terminal time and accepted state. At $t_i$, store the state as the
$i$th output. Discard auxiliary states once their successors have
been accepted.
\item Return the stored vector.
\end{enumerate}
\end{algorithm}

Successive exact transitions give the unconditional target, while
\eqref{fw:counts} bounds their number without any restriction on
observation spacings. The resulting guarantee counts all operations,
including calibration and output. Its proof can be found in
Appendix~\ref{app:forward-proofs}.

\begin{theorem}[Unconditional sampling with linear expected work]\label{fw:upper}
Algorithm~\ref{fw:algorithm}, denoted by $A_F$, is query-only,
terminates almost surely and samples \eqref{eq:forwardtarget} exactly.
There is a constant $C>0$, depending only on the fixed coefficient
class and primitive costs, such that for every $T>0$, $x\in\R^d$,
$m\ge1$ and the grid,
\begin{equation}\label{cx:forward-upper}
 \sup_{(b,\sigma)\in\mathcal F_T}\E\Wk_{A_F}\le C(t_m+m).
\end{equation}
The constant is uniform in the initial state, horizon and requested grid.
\end{theorem}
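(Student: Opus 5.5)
The proof splits into three parts: the query-only property, exactness together with almost-sure termination, and the work bound. Each part reduces to the short-transition results already stated. For \emph{query-only}, I check that every quantity used is computed from the class bounds or from point evaluations of $b$ and $\sigma$. These quantities are $h_{\mathrm{env}}$, $K_{\mathrm{rej}}$, $C$, $S$, $\widetilde f_\zeta$, $g$, $H$ and $\Upsilon_j$. No derivative or transition density is ever requested, since $H$ in \eqref{core:defect} involves only coefficient values. The calibration in step 1 depends only on $(d,k,\Mc,\lambda,\Lambda)$. Because $h_{\mathrm{env}}\le 1$ may be assumed, I would also check that it does not depend on $T$, so it costs $O(1)$ work.

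For \emph{exactness}, I invoke \eqref{core:shortcoinlaw} from Lemma~\ref{core:shortcoin}. Proposing $V\sim\calN(u,(4\kappa/3)h\Id)$ and accepting with probability $p(s,u;t,V)/E_h(V-u)$ makes the subprobability density of an accepted trial equal to $p(s,u;t,v)/K_{\mathrm{rej}}$. Trials are independent and identically distributed given $(s,u)$. The number of trials is therefore geometric with success probability $1/K_{\mathrm{rej}}$, hence almost surely finite. The accepted state has law $p(s,u;t,\cdot)$, because the total mass $\int p=1$ is exactly what makes the success probability equal $1/K_{\mathrm{rej}}$. Each TransitionCoin call terminates almost surely by Lemma~\ref{core:shortcoin}. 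Every subdivided step has duration at most $h_{\mathrm{env}}$ by \eqref{fw:counts}, so the lemma applies on every step. Chaining the steps with fresh randomness and using the Markov property gives the joint law of the fine skeleton as the product of the short transition densities. Integrating out the auxiliary states by Chapman--Kolmogorov recovers \eqref{eq:forwardtarget}. The Chapman--Kolmogorov identity holds pointwise for the continuous version of $p$, so this integration is legitimate.

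For the \emph{work bound}, Lemma~\ref{core:shorttransition} gives a constant $C_1$, uniform over $\mathcal F_T$, over $(s,u)$ and over $0<h\le h_{\mathrm{env}}$, that bounds the expected work of one short transition conditional on its starting point. The step-$j$ work is bounded conditionally on the past, so the tower property bounds the total expected work by
\[
 C_0+N_{\mathrm{short}}(C_1+C_2)+C_3m .
\]
Here $C_0$ covers calibration, $C_2$ covers per-step bookkeeping (memory and time updates), and $C_3m$ covers output writes and loop control over the $m$ gaps. Using $N_{\mathrm{short}}\le m+t_m/h_{\mathrm{env}}$ and $m\ge1$, the constant $C_0$ is absorbed into $C(t_m+m)$. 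This gives \eqref{cx:forward-upper} with $C$ depending only on the class and the primitive costs.

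The main obstacle is the uniformity behind Lemma~\ref{core:shorttransition}. The expected work of TransitionCoin must be bounded uniformly in the proposed endpoint $V$, which is unbounded. It must also be integrable against the Gaussian proposal and the geometric number of trials. I would rely on the stated uniform bound: SpatialCoin makes an expected total of at most two calls, and each local call has $\E J=2$. The remaining point is to ensure that the cost of sampling the four-component mixture $\widetilde f_\zeta$ and of evaluating $C$ and $S$ is $O(1)$ independently of $\zeta$. Wald's identity then finishes the argument: it bounds the cost of a geometric number of trials, each of uniformly bounded expected cost, by $K_{\mathrm{rej}}$ times that per-trial cost.
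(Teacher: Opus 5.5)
Your proposal is correct and follows essentially the same route as the paper. Like the paper, you invoke Lemma~\ref{core:shorttransition} for the exact conditional law and the uniform conditional work of each short step, chain the steps by successive conditioning and the pointwise Chapman--Kolmogorov identity, and sum the work bounds using $N_{\mathrm{short}}\le m+t_m/h_{\mathrm{env}}$ and $m\ge1$. One accounting item the paper states explicitly and you leave implicit is that ceilings are not primitives in the exact-real model, so the integers $n_i$ must be computed by counted loops in $O(N_{\mathrm{short}})$ work; your per-step bookkeeping constant absorbs this cost.
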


\section{Exact bridge sampling}\label{poly:section}

Recall that the bridge target \eqref{eq:bridgetarget} fixes both
endpoints. A natural extension of unconditional sampling generates an exact
prefix and accepts it according to its final transition density to the
prescribed endpoint. Its expected work can grow exponentially with the
horizon, even for constant coefficients and equal endpoints; see
Section~\ref{br5:terminal-rejection}. 

We therefore construct a proposal
that incorporates both endpoints. It first samples a bridge on an
auxiliary time grid, then inserts the requested observations from their
conditional bridge laws.
The auxiliary-grid construction uses a finite graph of spatial cells.
Graph paths represent skeletons with controlled increments that remain
in a bounded region. Two further types of proposal cover the remaining
skeletons, according to whether the first violation is a large increment
or an exit from this region. Each component has an envelope mass and an
acceptance test. Selecting components in proportion to these masses
makes their accepted contributions combine into the full bridge density.

We describe the construction on $[0,T_0]$, with $T_0=2n$, integer
$n\ge1$, and both endpoints zero. We include $n$ as an auxiliary
requested time if necessary. Section~\ref{bridge:general-construction}
reduces arbitrary horizons and endpoints to these inputs and discards
any additional output.

The construction uses the transition coin and short-transition sampler
of Section~\ref{core:section}. For convenience, write
$K_{\mathrm b}=K_{\mathrm{rej}}$ and $c=4\kappa/3$. For
$0<t-s\le h_{\mathrm{env}}$, the transition coin satisfies
\[
 \Pp\{\TC(s,u;t,v)=1\}
 =\frac{p(s,u;t,v)}{K_{\mathrm b}G_{c,t-s}(v-u)},
\]
and both routines have uniformly bounded expected work. All draws below
use fresh randomness conditional on their inputs, except for explicitly
shared estimates.

\subsection{The auxiliary grid and proposal components}\label{br5:proposal}

For an even integer $N$ and $h=T_0/N$, the unnormalized bridge density
on the equally spaced grid is
\[
 F(z_1,\ldots,z_{N-1})
 =\prod_{i=1}^N p((i-1)h,z_{i-1};ih,z_i),\qquad z_0=z_N=0.
\]
Its normalizing constant is $p_{T_0}=p(0,0;T_0,0)$, so the target
density is $F/p_{T_0}$. We first define the graph and the partition of
skeleton space used by the proposal.

\subsubsection{Time and space grids}
We first specify the time grid and spatial cells.
Fix computable class constants $D,R_0\ge1$, $h_0,c_1>0$ and
$0<c_0\le1$.
Let $\alpha\in(0,1]$ be the dyadic exponent in the relative density
estimate. The relative density estimate is established in
Section~\ref{br5:local-inputs-proof}, and the constants are calibrated
in Section~\ref{br5:parameter-choice}. Set
\begin{equation}\label{br5:grid}
 N=2\lceil n^4/h_0\rceil,\qquad h=T_0/N,\qquad
 L=\sqrt{Dnh},\qquad \rho=\frac{c_0\sqrt h}{1+\sqrt n},
 \qquad \ell=c_1\rho N^{-1/\alpha}.
\end{equation}
The coarse grid contains $n$ and $h\le h_0n^{-3}$. The cell width
$\ell$ makes the relative variation of the transition density on
each good edge, defined below, of order $1/N$. The product of $N$
cell envelopes therefore introduces only a bounded factor.

Partition space into half-open cubes of side $\ell$, called cells,
centered at $\ell z$, $z\in\mathbb Z^d$. The fixed endpoints are
singleton cells with center zero. For a transition edge $e=(s,u;t,v)$, let $u_0$ and $v_0$ be the
centers of the cells containing $u$ and $v$, respectively.
Define $e^0=(s,u_0;t,v_0)$, the transition between these centers
at the same times. We call $e$ good if $|v_0-u_0|\le L$,
and bad otherwise. For transitions between consecutive coarse times, the parameter
choices ensure, on every good edge,
\begin{equation}\label{br5:cell-errors}
 \left|\frac{p(e)}{p(e^0)}-1\right|\le\frac1{100N},\qquad
 \left|\frac{g(e)}{g(e^0)}-1\right|\le\frac1{100N},
 \qquad \sqrt d\,\ell\le L/4.
\end{equation}
In particular, a bad edge has actual displacement greater than
$3L/4$. These comparisons are verified in
Section~\ref{br5:parameter-choice}. 

To obtain a finite graph, we retain only the cells in a bounded
spatial region. Let $R=R_0(n+1)$ and $J_{\mathrm{box}}$ be the largest integer
with $J_{\mathrm{box}}\ell\le R$. Retain the cells with indices in
$[-J_{\mathrm{box}},J_{\mathrm{box}}]^d$, whose union is
\begin{equation}\label{br5:box}
 \mathcal B=[-B,B)^d,\qquad B=(J_{\mathrm{box}}+1/2)\ell,
 \qquad R_-=R-\ell/2\le B.
\end{equation}
For each internal coarse time $ih$, $1\le i\le N-1$, and each
retained spatial cell $C$, introduce a vertex $(ih,C)$. This
vertex records that the state at time $ih$ lies in $C$. Thus,
each internal time has one vertex for every retained cell; these
vertices form the layer corresponding to that time.
Add a single source vertex at time $0$ and a single terminal
vertex at time $T_0$, each corresponding to the fixed state zero
and the singleton cell $\{0\}$. Connect a vertex at time $ih$
to a vertex at time $(i+1)h$ by a directed edge precisely when
the centers of their associated cells are at distance at most
$L$. 

A path from the source to the terminal vertex selects one cell
$C_i$ at each internal time $ih$. It therefore represents all
continuous-state skeletons $(z_1,\ldots,z_{N-1})$ satisfying
$z_i\in C_i$ for every $i$. Let $E_0$ be the set of skeletons
represented by all such paths. Equivalently, a skeleton belongs
to $E_0$ if every internal state lies in $\mathcal B$ and every
transition edge is good.

\subsubsection{Partition and component selection}\label{br5:component-plan}
The graph represents the event $E_0$. To classify a skeleton outside
$E_0$, inspect its edges in time order, testing for a bad edge before
testing whether its terminal state leaves $\mathcal B$. Let $B_i$ be
the event that the first violation is a bad edge at position $i$,
$1\le i\le N$. Let $X_\zeta$ be the event that the first violation
is an exit along a good edge through face
$\zeta=(j,s)\in\{1,\ldots,d\}\times\{-1,1\}$, where $j$ is the
coordinate and $s$ its exit direction. If several coordinates leave
the box at once, use the first such coordinate. The suffix after the
first violation is unrestricted. Thus $E_0$, the $N$ events $B_i$,
and the $2d$ events $X_\zeta$ partition skeleton space.

We construct one proposal and acceptance test for each component.
The graph component has an unknown envelope mass $Z_0$, while the
other components have known masses $D_i$ and $D_\zeta$. Their
one-trial accepted subdensities will be, respectively,
\begin{equation}\label{br5:component-identities}
 \frac{F\ind_{E_0}}{Z_0},\qquad
 \frac{F\ind_{B_i}}{D_i},\qquad
 \frac{F\ind_{X_\zeta}}{D_\zeta}.
\end{equation}
Here an accepted density describes the output together with the
event that the trial succeeds; its integral is the success probability.
The masses and tests are specified below.

These identities determine how the components must be combined. Let 
\begin{equation}\label{br5:total-envelope}
 M_{\mathrm{env}}=Z_0+\sum_{i=1}^N D_i+\sum_\zeta D_\zeta.
\end{equation}
Select the graph component with probability $Z_0/M_{\mathrm{env}}$,
or a remaining component with probability $D_i/M_{\mathrm{env}}$
or $D_\zeta/M_{\mathrm{env}}$, and run its trial once. The selection
probabilities cancel the denominators in
\eqref{br5:component-identities}. Since the events form a partition,
one complete trial has accepted density $F/M_{\mathrm{env}}$.
On success its density is therefore $F/p_{T_0}$. On failure we restart
component selection. Section~\ref{br5:selection} implements this
selection without evaluating $Z_0$ and bounds the number of retries.

\subsection{The component represented by the graph}\label{br5:cell-component}

For $E_0$, we first select a graph path and then draw the intermediate
states uniformly in its cells. We choose the path weights so that this
two-stage proposal is obtained by normalizing an envelope for $F$
on $E_0$. This requires both sampling paths with unknown
weights and implementing acceptance probabilities involving unknown
transition densities.

\subsubsection{Cell envelopes and local density coins}\label{br5:local}
We use $\mathfrak e$ for a directed graph edge and retain $e$ for a
transition between specified diffusion states. A graph edge
$\mathfrak e$ from $(s,C)$ to $(t,C')$ represents all transitions
$e=(s,u;t,v)$ with $u\in C$ and $v\in C'$. Its center transition is
$e^0=(s,u_0;t,v_0)$, where $u_0,v_0$ are the cell centers.
We begin with the weight of one graph edge $\mathfrak e$. By \eqref{br5:cell-errors}, $U_{\mathfrak e}=\left(1+\frac1{10N}\right)p(e^0)$
upper-bounds the transition density for every pair of states in these cells.
The bound is constant throughout that pair of cells. Put $v_{\mathfrak e}=\ell^d$
when $\mathfrak e$ enters an internal cell and $v_{\mathfrak e}=1$ when it enters the fixed
terminal vertex.
Assign the edge weight $v_{\mathfrak e}U_{\mathfrak e}$. For a graph path $\gamma$, the product $\omega_\gamma=\prod_{\mathfrak e\in\gamma}v_{\mathfrak e}U_{\mathfrak e}$
is the integral of $\prod_{\mathfrak e\in\gamma}U_{\mathfrak e}$ over all internal states
in the selected cells. Define $Z_0=\sum_\gamma\omega_\gamma$.
We seek the path distribution
\[
 \Pp(\Gamma=\gamma)=\frac{\omega_\gamma}{Z_0}.
\]
Indeed, the subsequent uniform draws have conditional density
$1/\prod_{\mathfrak e\in\gamma}v_{\mathfrak e}$, so the resulting skeleton density is
$Z_0^{-1}\prod_{\mathfrak e\in\gamma}U_{\mathfrak e}$ on $E_0$. 

The weights cannot be evaluated because $p(e^0)$ is unknown. We
therefore factor each edge weight into a known factor and a coin mean:
\begin{equation}\label{br5:cell-envelope}
 \begin{aligned}
 U_{\mathfrak e}&=\left(1+\frac1{10N}\right)p(e^0),\\
 k_{\mathfrak e}&=v_{\mathfrak e}\left(1+\frac1{10N}\right)2g(e^0),
 &q_{\mathfrak e}&=\frac{p(e^0)}{2g(e^0)},\qquad k_{\mathfrak e}q_{\mathfrak e}=v_{\mathfrak e}U_{\mathfrak e}.
 \end{aligned}
\end{equation}
The factor $k_{\mathfrak e}$ is computable. The weighted-path procedure below
requires coins for the $q_{\mathfrak e}$, with means bounded away from zero.
Normalizing by the frozen density $g$ gives this property through the
following local density coin.

For a fixed constant $D_{\sharp}>0$, the routine
$\operatorname{SharpCoin}_n(e)$ (defined in Section \ref{br5:local-inputs-proof}) applies to transition edges satisfying
\begin{equation}\label{br5:moderate}
 0<h=t-s\le h_0n^{-3},\qquad |v-u|^2/h\le D_{\sharp}n,
\end{equation}
and returns a coin of mean $p(e)/(2g(e))\in[1/4,3/4]$ with uniformly
bounded expected work. The construction randomizes the parametrix
expansion to obtain an unbiased estimator $Y_n(e)$ of $p(e)/g(e)$
with $1/2\le Y_n(e)\le3/2$ almost surely. An independent uniform
comparison with $Y_n(e)/2$ gives the coin. To obtain these tighter
bounds, the Gaussian reference reflects the covariance near the
terminal point, while a broader reference controls contributions from
outside that neighborhood. Both contributions are retained.
Section~\ref{br5:local-inputs-proof} gives the construction and proof.
The common calibration in Section~\ref{br5:parameter-choice} covers
every use of this routine below.

In particular, $\operatorname{SharpCoin}_n(e^0)$ supplies the required
$q_{\mathfrak e}$-coin. We can thus represent the desired path weights as
$\omega_\gamma=\prod_{\mathfrak e\in\gamma}k_{\mathfrak e}q_{\mathfrak e}$ without evaluating either
$q_{\mathfrak e}$ or $U_{\mathfrak e}$.

\subsubsection{Sampling paths with unknown weights}\label{br5:weighted}
We now describe how to use the known edge factors and the available
coins to sample a path with the required distribution. The same procedure will implement component selection in
Section~\ref{br5:selection}. Consider a finite layered graph with
$E$ edges, source $o$, terminal $z$, at least one complete path, and
exactly $N$ edges on every complete path (since the time requested is fixed in our exact simulation problem). Each graph edge $\mathfrak e$ has a known factor
$k_{\mathfrak e}>0$ and an available coin of mean $w_{\mathfrak e}\in[1/4,1]$. Delete edges
that lie on no complete path. The required path probability is $\frac{\prod_{\mathfrak e\in\gamma}k_{\mathfrak e}w_{\mathfrak e}}
      {\sum_{\gamma'}\prod_{\mathfrak e\in\gamma'}k_{\mathfrak e}w_{\mathfrak e}}.$
For the application to our cell graph, the weight is $w_{\mathfrak e}=q_{\mathfrak e}$.

The procedure first uses initial estimates to form a computable
path proposal, then applies an acceptance test to obtain the target
distribution. To construct this test, we work with logarithms:
products of edge weights become sums, and the ratio of target and
proposal weights becomes an exponential. Set $L_0=2\log16$ and $c_{\mathfrak e}=1+\frac{\log w_{\mathfrak e}}{L_0}$
so that
$k_{\mathfrak e}w_{\mathfrak e}
 =k_{\mathfrak e}\exp\{L_0(c_{\mathfrak e}-1)\}$.
A construction based on a linear Bernoulli factory gives
$c_{\mathfrak e}$-coins from $w_{\mathfrak e}$-coins without
evaluating the unknown logarithm.

At each edge, generate an initial average $\widehat c_{\mathfrak e}$
of independent $c_{\mathfrak e}$-coins and define the computable
proposal weight $a_{\mathfrak e}
 =k_{\mathfrak e}\exp\{L_0(\widehat c_{\mathfrak e}-1)\}.$
Keep these initial estimates fixed until one path is returned.
We propose a path with probability proportional to
$\prod_{\mathfrak e\in\gamma}a_{\mathfrak e}$.
To sample from this distribution without enumerating all complete
paths, let $D(v)$ be the sum of the proposal weights of all paths
from vertex $v$ to the terminal vertex $z$. Then these sums can be computed
backwards through the graph:
\[
 D(z)=1,\qquad
 D(v)=\sum_{\mathfrak e=(v,v')}a_{\mathfrak e}D(v').
\]
Here $D(z)=1$ is the weight of the empty path.
Starting from $o$, choose each outgoing edge
$\mathfrak e=(v,v')$ with probability
$a_{\mathfrak e}D(v')/D(v)$. Multiplication along the selected path
cancels the intermediate values of $D$, giving proposal probability
$\prod_{\mathfrak e\in\gamma}a_{\mathfrak e}/D(o)$.

To obtain the target distribution, the acceptance probability must
be proportional to the ratio of target and proposal weights:
\[
 \frac{\prod_{\mathfrak e\in\gamma}
                    k_{\mathfrak e}w_{\mathfrak e}}
      {\prod_{\mathfrak e\in\gamma}a_{\mathfrak e}}
 =
 \exp\left\{
 L_0\sum_{\mathfrak e\in\gamma}
              (c_{\mathfrak e}-\widehat c_{\mathfrak e})
 \right\}.
\]
The proportionality factor must be common to all candidate paths,
conditional on the initial estimates. We implement this exponential
acceptance rule using a Poisson number of independent factor coins.

We first describe one factor coin. Generate a fresh average
$\overline c_{\mathfrak e}$ of $B_0$ independent $c_{\mathfrak e}$-coins
at every edge and let
$d_{\mathfrak e}=\overline c_{\mathfrak e}-\widehat c_{\mathfrak e}$.
Conditional on the initial estimates, $d_{\mathfrak e}$ has mean
$c_{\mathfrak e}-\widehat c_{\mathfrak e}$. To turn these estimates
into a valid comparison probability, subtract the largest estimated
sum over all complete paths. This maximum is obtained by the
backward recursion
\[
 M(z)=0,\qquad
 M(v)=\max_{\mathfrak e=(v,v')}
                 \{d_{\mathfrak e}+M(v')\}.
\]
Thus $M(o)$ is at least the sum along the candidate path.
Using the same edge estimates for that sum and the maximum, define
\begin{equation}\label{br5:graph-factor}
 U_\gamma
 =1+\frac{\sum_{\mathfrak e\in\gamma}d_{\mathfrak e}-M(o)}{2N}.
\end{equation}
Since $d_{\mathfrak e}\in[-1,1]$ and every complete path has $N$
edges, $U_\gamma\in[0,1]$. A fresh uniform comparison with
$U_\gamma$ generates the factor coin.

For each proposed path, draw an independent
$K\sim\operatorname{Poisson}(\lambda_0)$, where
$\lambda_0=2NL_0$, and generate $K$ factor coins, using fresh edge
estimates and a fresh uniform variable for each factor.
Accept the path if every factor succeeds; an empty product succeeds.
Otherwise draw a new proposal path and repeat the Poisson test,
keeping the initial estimates fixed.

Conditional on the initial estimates, the expected maximum $M(o)$
is common to all candidate paths. The Poisson product therefore
gives an acceptance probability proportional to the required
exponential, with a common proportionality factor. The accepted
path consequently has weights exactly proportional to
$\prod_{\mathfrak e\in\gamma}k_{\mathfrak e}w_{\mathfrak e}$.
The sample sizes, coin implementations and proof are given in
Section~\ref{br5:weighted-proof}, which also establishes expected
work $O\{EN^3\log(4EN)\}$ when the input coins have uniformly bounded
conditional expected work.

\subsubsection{Acceptance within the selected cells}\label{br5:cell-acceptance}
Now we apply the weighted-path procedure to the cell graph and, conditional
on its selected path, draw the internal states independently and
uniformly in their cells. Write $\mathfrak e_i$ for the $i$th graph
edge of the selected path and $e_i=((i-1)h,z_{i-1};ih,z_i)$ for the
corresponding actual transition. As shown above, the resulting proposal
density on $E_0$ is $Z_0^{-1}\prod_{i=1}^N U_{\mathfrak e_i}$.
Accept this skeleton when independent coins of means
$p(e_i)/U_{\mathfrak e_i}$ succeed on all $N$ transitions.
Therefore, the product of the proposal density and the acceptance probability is
\[
 \frac{\prod_{i=1}^N U_{\mathfrak e_i}}{Z_0}
 \prod_{i=1}^N\frac{p(e_i)}{U_{\mathfrak e_i}}\ind_{E_0}
 =\frac{F\ind_{E_0}}{Z_0},
\]
as required by \eqref{br5:component-identities}.

For an actual transition $e$ represented by graph edge $\mathfrak e$,
it remains to implement its ratio coin without evaluating $p(e)$
or $U_{\mathfrak e}$. The cell comparisons give
\begin{equation}\label{br5:cell-slack}
 1+\frac{c_2}{N}\le\frac{U_{\mathfrak e}}{p(e)}\le1+\frac{C_2}{N},
 \qquad c_2=9/101,\quad C_2=1/9.
\end{equation}
The lower bound supplies a positive gap below one for each acceptance
probability; the upper bound ensures that all $N$ tests succeed with
probability at least $(1+C_2/N)^{-N}\ge e^{-C_2}$, uniformly over
the selected path and the states in its cells.

For an actual transition edge $e$, write
\[
 q=\frac{p(e)}{2g(e)},\qquad q_0=\frac{p(e^0)}{2g(e^0)},\qquad
 \beta=\frac{g(e)}{(1+1/(10N))g(e^0)}.
\]
Fresh sharp coins and linear Bernoulli factories produce coins of
means $A=\beta q$ and $q_0-A$. Choose one of these two labels with
equal probability and toss its coin, repeating the choice and toss
on failure. At the first success, return one exactly when the label
is $A$. The returned bit has mean $A/q_0=p(e)/U_{\mathfrak e}$.
Section~\ref{br5:cell-acceptance-proof} specifies the factory calls
and proves an $O(N)$ expected cost per ratio coin, hence $O(N^2)$
for the acceptance tests on one selected path. All these calls,
including the center coins, use fresh randomness after path selection.

\subsection{Components outside the graph}\label{br5:outside}

We next construct the trials for the events $B_i$ and $X_\zeta$
defined in Section~\ref{br5:component-plan}. Each trial starts with
a specified label, generates a candidate skeleton, and rejects it
unless its first violation matches that label. A further acceptance
test gives the corresponding densities in
\eqref{br5:component-identities}. 

Write $p_i=p((i-1)h,z_{i-1};ih,z_i)$ and $w_i=z_i-z_{i-1}$.
The known envelope masses are
\begin{equation}\label{br5:residual-masses}
 \begin{gathered}
 Q_h=K_{\mathrm b}(2\pi ch)^{-d/2},\qquad
 \delta_J=K_{\mathrm b}2^{d/2}e^{-9Dn/(64c)},\\
 V_-=-\Mc,\qquad V_+=\Mc+\Lambda/2,\qquad
 \delta_X=e^{-R_-+V_+T_0},\\
 D_i=\delta_JQ_h\quad(1\le i\le N),\qquad
 D_\zeta=\delta_XQ_h.
 \end{gathered}
\end{equation}
Here $Q_h$ bounds the final transition density, while $\delta_J$ and
$\delta_X$ account for the large increment or the exit, respectively.
The parameter choices make these contributions small enough relative
to $p_{T_0}$ to keep the total envelope mass controlled. Each known
probability below is implemented by an independent uniform comparison.

\subsubsection{A bad-edge component}\label{br5:bad-component}
For a specified label $B_i$ with $i<N$, use a broader Gaussian at
step $i$ to propose the large increment. Generate the internal states
$z_1,\ldots,z_{N-1}$ by successive exact short transitions, except
that step $i$ uses an independent Gaussian increment with density
$G_{2c,h}$. Attach $z_N=0$ and reject unless the skeleton belongs
to $B_i$.

For a retained skeleton, generate a transition coin on transition edge $i$ and
another on transition edge $N$. Draw independent uniforms $U_i,U_N$ for the
comparisons on these two edges. Conditional on the proposed states,
all four draws use fresh, mutually independent randomness. Accept
exactly when both coins return one and
\begin{equation}\label{br5:jump-tests}
 U_i\le\frac{K_{\mathrm b}G_{c,h}(w_i)}
                 {\delta_JG_{2c,h}(w_i)},\qquad
 U_N\le\frac{K_{\mathrm b}G_{c,h}(w_N)}{Q_h}.
\end{equation} If any of these tests fails, reject the proposal.

For $B_N$, generate the entire diffusion prefix by exact short
transitions, attach $z_N=0$, and retain only skeletons in $B_N$.
Accept when the last transition coin succeeds and an independent
uniform is at most $K_{\mathrm b}G_{c,h}(w_N)/(\delta_JQ_h)$.
The Gaussian bounds make every comparison probability
at most one. In either case the accepted density is
$F\ind_{B_i}/D_i$.

\subsubsection{An exit component}\label{br5:exit-component}
For a specified exit label $X_\zeta$, with $\zeta=(j,s)$, put
$\theta=se_j$. We propose the exit by changing the drift in this
direction. On $[0,T_0-h]$, use $b_\theta=b+a\theta$ until the
first coarse-grid observation outside $\mathcal B$,
\[
 \tau=\inf\{ih:1\le i\le N-1,\ X_{ih}\notin\mathcal B\},
 \qquad \inf\varnothing=\infty,
\]
and use the original drift thereafter. The diffusion coefficient
remains $\sigma$. The tilted coefficients have componentwise derivative
bounds through order $k$ at most $\Mc+d2^k\Mc^2$ and preserve the
ellipticity bounds. The short-transition sampler therefore applies
with constants for this fixed enlarged class. Its admissible duration
$h_{\mathrm{tilt}}$ is defined in Section~\ref{br5:parameter-choice},
where the common calibration ensures $h\le h_{\mathrm{tilt}}$.
All coefficient values are computed from original value queries.

The acceptance test for this change of drift requires additional path
observations. Generate an independent Poisson process of rate
$q_V=V_+-V_-$ on $[0,T_0-h]$. Merge its times with the coarse grid
and simulate the proposal at these times by exact short transitions.
The drift switches only at a coarse-grid exit, so an observation at a
Poisson time does not trigger a switch. Attach $z_N=0$ and retain
only skeletons in $X_\zeta$; in particular, reject if $\tau=\infty$.

Let $V(t,z)=\theta\cdot b(t,z)+\tfrac12\theta^\top a(t,z)\theta$.
For a retained proposal, draw independent uniforms for the comparisons
\begin{equation}\label{br5:exit-tests}
 \begin{aligned}
 U_1&\le e^{-\theta\cdot X_\tau+R_-},
 &U_2&\le e^{V_+(\tau-T_0)},\\
 U_t^{\mathrm P}&\le\frac{V(t,X_t)-V_-}{q_V}
 &&\text{at every Poisson time }t<\tau.
 \end{aligned}
\end{equation}
Conditional on the full proposal path and averaged over the Poisson
process and uniforms, the probability that these tests all succeed
is the likelihood of the original unconditional diffusion law relative to the
proposal law, divided by $\delta_X$.
Finally require the last transition coin and an independent uniform
comparison with $K_{\mathrm b}G_{c,h}(w_N)/Q_h$ to succeed.
All comparison probabilities lie in $[0,1]$ on the retained event.
After the auxiliary states are discarded, the accepted density is
$F\ind_{X_\zeta}/D_\zeta$. Section~\ref{br5:residual-proof} proves
both residual component identities, gives the finite classification
procedure, and bounds the work of their trials.

\subsubsection{Selecting a component and restarting}\label{br5:selection}
It remains to implement the selection probabilities of
Section~\ref{br5:component-plan}. The graph component has unknown
mass $Z_0$, so we cannot compute these probabilities directly.
Instead, append a path of $N$ edges for each label $B_i$ or $X_\zeta$
to the cell graph. Each added path has new internal vertices and shares
only the source and terminal vertices with the other paths. Give one
edge the known factor $D_i$ or $D_\zeta$, every other edge factor
one, and all its edges coins of mean one. Its path weight is exactly
the required component mass. 
Thus the total path weight of the augmented graph is
\[
 M_{\mathrm{env}}=Z_0+N\delta_JQ_h+2d\delta_XQ_h.
\]
Apply the weighted-path procedure of Section~\ref{br5:weighted}.
It selects an original cell path with its weight divided by
$M_{\mathrm{env}}$, or an added path with probability
$D_i/M_{\mathrm{env}}$ or $D_\zeta/M_{\mathrm{env}}$.
The original paths have total probability $Z_0/M_{\mathrm{env}}$;
conditional on this choice, their distribution is the one used in
the graph component. Thus a single graph draw selects both the
component and, when needed, its cell path.

For an original path, draw uniform states in its cells and perform
the acceptance tests of Section~\ref{br5:cell-acceptance}. For an
added path, run the proposal and tests of its specified component
once. If any required test fails, discard the entire candidate and
restart the augmented-graph draw with fresh initial estimates and
fresh component randomness. 

The component identities give accepted density
$F/M_{\mathrm{env}}$, as explained in
Section~\ref{br5:component-plan}. Section~\ref{br5:global-trial-proof}
proves $M_{\mathrm{env}}\le Cp_{T_0}$, uniformly in $n$ and the
coefficient instance. Hence each complete trial succeeds with
probability $p_{T_0}/M_{\mathrm{env}}\ge C^{-1}$, and repetition
until success returns the exact coarse bridge.

\subsection{Inserting the requested times}\label{br5:insert-section}
The auxiliary grid need not contain all requested observation times,
so we complete the bridge sample by inserting the missing states
from their conditional bridge laws.
Starting from the accepted coarse bridge, process the requested times
in increasing order, retaining all accepted
states and copying any state already present. To insert $t$, let
$(s,u)$ be the nearest generated state to its left (i.e. $X_s = u$) and $(r,v)$ the
coarse-grid state immediately to its right. Let
\[
 h'=r-s,\qquad \tau_1=t-s,\qquad \tau_2=r-t,\qquad w=v-u.
\]
Conditional on the retained states, the required density is
\[
 \frac{p(s,u;t,z)p(t,z;r,v)}{p(s,u;r,v)}.
\]
We sample this density by rejection, choosing the proposal according
to the displacement of the existing endpoints. Since $h'\le h$, both new edges
satisfy the short-time restrictions. For a candidate state $z$,
write $e_1=(s,u;t,z)$ and $e_2=(t,z;r,v)$.

\subsubsection{Moderate displacement}
Let $D_0<D_1$ and $C_g'>0$ be the fixed constants chosen in
Section~\ref{br5:parameter-choice}. If $|w|^2/h'\le D_0n$, put
$A_+=a(r,v)+\Id/n$ and partition candidate states into three disjoint sets which cover $\mathbb{R}^d$:
\[
 \begin{aligned}
 S_0&=\{|z-u|^2/\tau_1\le D_1n,\ |v-z|^2/\tau_2\le D_1n\},\\
 S_1&=\{|z-u|^2/\tau_1>D_1n\},\\
 S_2&=\{|z-u|^2/\tau_1\le D_1n,\ |v-z|^2/\tau_2>D_1n\}.
 \end{aligned}
\]
For a positive definite matrix $V$, $G_{V,h}$ denotes the centered
Gaussian density with covariance $hV$; it agrees with $G_{c,h}$ when
$V=c\Id$. On $S_0$, the covariance $A_+$ dominates both frozen
covariances. The other regions use a broader Gaussian on the edge
with large displacement. With
$\delta_1=K_{\mathrm b}2^{d/2}e^{-D_1n/(4c)}$, define
\begin{equation}\label{br5:insert-envelopes}
 \begin{aligned}
 \mathcal E_0(z)&=4(C_g')^2G_{A_+,\tau_1}(z-u)G_{A_+,\tau_2}(v-z),\\
 \mathcal E_1(z)&=\delta_1K_{\mathrm b}G_{2c,\tau_1}(z-u)G_{c,\tau_2}(v-z),\\
 \mathcal E_2(z)&=\delta_1K_{\mathrm b}G_{c,\tau_1}(z-u)G_{2c,\tau_2}(v-z).
 \end{aligned}
\end{equation}
On $S_j$, $\mathcal E_j$ envelopes the numerator of the conditional
bridge density. Its full-space mass is known:
\[
 M_0=4(C_g')^2G_{A_+,h'}(w),\quad
 M_1=\delta_1K_{\mathrm b}G_{c,2\tau_1+\tau_2}(w),\quad
 M_2=\delta_1K_{\mathrm b}G_{c,\tau_1+2\tau_2}(w).
\]
Choose $j$ with probability $M_j/(M_0+M_1+M_2)$ and draw $z$ from
the Gaussian density $\mathcal E_j/M_j$. Reject unless $z\in S_j$.
The Gaussian means and covariances are recorded in
Section~\ref{br5:insertion-proof}.

For a retained state with $j=0$, require both sharp coins and
independent comparisons with
\[
 \frac{g(e_1)}{C_g'G_{A_+,\tau_1}(z-u)},\qquad
 \frac{g(e_2)}{C_g'G_{A_+,\tau_2}(v-z)}
\]
to succeed. If $j=1$ or $2$, require both transition coins and one
comparison with, respectively,
\[
 \exp\left\{\frac{D_1n-|z-u|^2/\tau_1}{4c}\right\},\qquad
 \exp\left\{\frac{D_1n-|v-z|^2/\tau_2}{4c}\right\}.
\]
These tests have product mean $p(e_1)p(e_2)/\mathcal E_j(z)$ on
$S_j$. Hence the accepted density is
$p(e_1)p(e_2)/(M_0+M_1+M_2)$.

\subsubsection{Larger displacement and insertion cost}
If $|w|^2/h'>D_0n$, draw
\[
 z\sim\calN\left(\frac{\tau_2u+\tau_1v}{h'},
                    \frac{c\tau_1\tau_2}{h'}\Id\right)
\]
and accept when both $\TC(s,u;t,z)$ and $\TC(t,z;r,v)$ succeed.
The accepted density is
$p(e_1)p(e_2)/(K_{\mathrm b}^2G_{c,h'}(w))$, so this also gives
the required conditional bridge law.

In either case, rejection repeats only the current insertion trial
with fresh randomness, including a new mixture choice when applicable.
The existing states remain fixed. Moderate endpoint displacements give
uniformly bounded conditional expected cost. For larger displacements,
the conditional bound is $CG_{c,h'}(w)/p(s,u;r,v)$; averaging under
the exact bridge law cancels its denominator. The analysis in
Section~\ref{br5:insertion-proof} therefore gives total additional
expected work $O(m+N)$.

\subsection{The bridge sampler and its work}
\label{bridge:general-construction}
The preceding construction handles bridges with zero endpoints on
$[0,2n]$; we now extend it to arbitrary horizons and endpoints.
For general $T,x,y$, subtract the line joining the endpoints and
rescale time and space. Write $w=y-x$ and set
\begin{equation}\label{main:exponents}
 H_{T,x,y}=1+T+|w|^2/T,\qquad Q=16+d(4+8/\alpha).
\end{equation}
Choose the computable class constant $L_{\mathrm{sc}}\ge1$ specified
in Section~\ref{br5:bridge-sampler-proof} and set
\begin{equation}\label{br:class-preserving-scale}
 n=\lceil L_{\mathrm{sc}}H_{T,x,y}\rceil,\qquad S=2n,\qquad r=T/S.
\end{equation}
The transformation
\begin{equation}\label{br:affine-process}
 Z_s=\frac{X_{rs}-x-(s/S)w}{\sqrt r},\qquad 0\le s\le S,
\end{equation}
gives zero endpoints and coefficients
\begin{equation}\label{br:affine-coefficients}
 \begin{aligned}
 \widetilde b(s,z)
 &=\sqrt r\,b(rs,x+(s/S)w+\sqrt r\,z)-\frac{w}{\sqrt{TS}},\\
 \widetilde\sigma(s,z)
 &=\sigma(rs,x+(s/S)w+\sqrt r\,z).
 \end{aligned}
\end{equation}
They satisfy the original class bounds and can be evaluated using
original coefficient queries. The following algorithm collects the
construction and its restart rules. The theoretical analysis can be found in Section~\ref{br5:bridge-sampler-proof}.

\begin{algorithm}
\algcaption{Bridge sampler}\label{bridge:general-algorithm}
\alginout{$T>0$, endpoints $x,y$, and $0<t_1<\cdots<t_m<T$.}
{the bridge target \eqref{eq:bridgetarget}.}
\begin{enumerate}
\item Compute \eqref{br:class-preserving-scale} and provide the
transformed queries \eqref{br:affine-coefficients}. Transform the
requests to $t_i/r$ and include $n$ if absent. Use these coefficients
and $T_0=2n$ in steps~2--6.
\item Calibrate the class constants as in
Section~\ref{br5:parameter-choice}. Form the coarse grid and cell
graph of Section~\ref{br5:proposal}, then add the component paths
as in Section~\ref{br5:selection}.
\item Draw one augmented-graph path by the weighted-path procedure.
Generate fresh initial estimates for this call, keeping them fixed
only during its internal retries.
\item For an original path, draw uniform states in its cells and
perform the within-cell acceptance tests. For an added path, run one trial
of its bad-edge or exit component.
\item If a required test fails, discard the candidate and return to
step~3 with fresh randomness. Otherwise retain the coarse bridge.
\item Insert the requested times by
Section~\ref{br5:insert-section}, retaining accepted states and
repeating only the current insertion trial upon rejection.
\item At each original requested time return
\begin{equation}\label{br:affine-output}
 x+(t_i/T)w+\sqrt r\,Z_{t_i/r},\qquad 1\le i\le m,
\end{equation}
and discard all other states.
\end{enumerate}
\end{algorithm}

\begin{theorem}[Bridge sampling]\label{bridge:general}
Algorithm~\ref{bridge:general-algorithm}, denoted by $A_B$, is
query-only, terminates almost surely and samples
\eqref{eq:bridgetarget} exactly for every $T>0$, $x,y\in\R^d$,
$m\ge1$ and requested grid $0<t_1<\cdots<t_m<T$. There is a constant
$C>0$, depending only on the fixed coefficient class and primitive
costs, such that
\begin{equation}\label{br:general-work}
 \sup_{(b,\sigma)\in\mathcal F_T}\E\Wk_{A_B}
 \le C\{H_{T,x,y}^Q\log(2+H_{T,x,y})+m\}.
\end{equation}
The constant is uniform in the horizon, endpoints and requested grid.
\end{theorem}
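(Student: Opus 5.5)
The proof splits into three parts: exactness of the reduced zero-endpoint sampler, the reduction itself, and the work count in terms of $n$. I take as given the component identities \eqref{br5:component-identities}, the weighted-path guarantee of Section~\ref{br5:weighted}, the ratio-coin cost bound of Section~\ref{br5:cell-acceptance}, the envelope bound $M_{\mathrm{env}}\le Cp_{T_0}$, and the insertion analysis of Section~\ref{br5:insert-section}.

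\textbf{Reduction.} First I check that the affine map \eqref{br:affine-process} sends the bridge of $X$ from $x$ to $y$ on $[0,T]$ to the bridge of $Z$ from $0$ to $0$ on $[0,S]$. The coefficients \eqref{br:affine-coefficients} follow from It\^o's formula. They stay in $\mathcal F$ because the choice of $n$ in \eqref{br:class-preserving-scale} gives $r=T/S\le 1$ and $|w|/\sqrt{TS}\le C$, which follows from $n\ge L_{\mathrm{sc}}(T+|w|^2/T)$. Derivatives pick up factors $r^{1/2}$ and $r$, so they are not enlarged. The constant drift shift $w/\sqrt{TS}$ is bounded, and $L_{\mathrm{sc}}$ is chosen so that even the drift value bound $\sqrt r\,\Mc+|w|/\sqrt{TS}\le\Mc$ holds. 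The ellipticity bounds are unchanged. Since the map is a deterministic bijection on skeletons, pushing forward the exact law of $Z$ at the requested times plus $n$, and then discarding $n$, gives \eqref{eq:bridgetarget} exactly.

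\textbf{Exactness of the zero-endpoint sampler.} This is mostly bookkeeping. $E_0$, the $B_i$ and the $X_\zeta$ partition skeleton space. The weighted-path procedure on the augmented graph selects a component with probability equal to its mass over $M_{\mathrm{env}}$. Given the selection, the trial has accepted subdensity $F\ind_{\cdot}/(\text{mass})$. Summing over components, one trial has accepted density $F/M_{\mathrm{env}}$. Conditioned on success this is $F/p_{T_0}$, and since each trial uses fresh randomness, restarting until success preserves the law. Insertion is an exact rejection sampler for each conditional bridge kernel, and by the Markov property the requested states can be inserted sequentially given the retained states. Almost-sure termination follows because the success probability of each trial is at least $p_{T_0}/M_{\mathrm{env}}\ge C^{-1}$.

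\textbf{Work bound.} One augmented-graph draw costs
\[
O\{E N^3\log(4EN)\},
\]
provided the input coins (the sharp coins and the trivial coins on the added paths) have uniformly bounded expected work. Here $E$ is the number of graph edges. Each layer has $(2J_{\mathrm{box}}+1)^d\lesssim (R/\ell)^d$ cells, and each cell has at most $(L/\ell)^d$ neighbors, so
\[
E\lesssim N(R/\ell)^d(L/\ell)^d.
\]
The grid choices \eqref{br5:grid} give $N\asymp n^4$, $h\asymp n^{-3}$ and $L\asymp n^{-1}$, together with
\[
\rho\asymp n^{-2},\qquad \ell\asymp n^{-2}N^{-1/\alpha}\asymp n^{-2-4/\alpha},\qquad R\asymp n.
\]
Hence $R/\ell\asymp n^{3+4/\alpha}$ and $L/\ell\asymp n^{1+4/\alpha}$, so
\[
E\lesssim n^{4+d(4+8/\alpha)},\qquad EN^3\lesssim n^{16+d(4+8/\alpha)}=n^Q,
\]
with the logarithm contributing $\log n$.

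The other costs per trial are smaller. Cell acceptance costs $O(N^2)$. A bad-edge trial costs $O(N)$ short transitions. An exit trial costs $O(N+q_VT_0)=O(N)$ in expectation, and the tilted class has uniform constants. The trial count is geometric with mean at most $C$. The paths must be rebuilt with fresh initial estimates each retry, but that cost is already counted per trial. Calibration costs $O(1)$.

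Insertion costs $O(m+N)$ by Section~\ref{br5:insert-section}. Finally $n\asymp H_{T,x,y}$, so the total is $O(H^Q\log(2+H)+m)$.

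\textbf{Main obstacle.} The delicate step is justifying multiplication of per-trial expectations by the expected number of trials. The sharp-coin and ratio-coin costs are random and depend on the sampled path, and within a trial the acceptance event is correlated with the cost. I would use Wald's identity: each trial's cost has expectation bounded uniformly given the past, because every bound above holds conditionally on the initial estimates, uniformly over paths and cell states. The trial count is a stopping time with respect to the trial filtration, so the identity applies. A second point to verify is that the $O(E N^3\log)$ bound from Section~\ref{br5:weighted-proof} indeed requires only $w_{\mathfrak e}\in[1/4,1]$. This holds for the sharp coins, with means in $[1/4,3/4]$, and for the mean-one added coins.
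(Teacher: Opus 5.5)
Your proposal is correct and follows the paper's proof essentially step for step: the same affine reduction to zero endpoints on $[0,2n]$, the same assembly of the coarse-bridge trial and insertion bounds with independent complete restarts, and the same count $E\lesssim N(R/\ell)^d(L/\ell)^d$ giving $EN^3\lesssim n^Q$. The paper uses Lemma~\ref{supp:fw:retry} where you invoke Wald; the uniform per-trial bound comes from the fresh initial estimates at each restart, since conditional on fixed initial estimates the graph draw's expected cost is not uniformly bounded.

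There are a few small corrections, none of which affects the final bound:
\begin{itemize}
\item Time derivatives of $f(rs,x+(s/S)w+\sqrt r\,z)$ pick up the operator $r\partial_t+(w/S)\cdot\nabla$, not just a factor $r$. Class membership therefore also needs $r+\|w\|_1/S\le1$. The paper obtains this from $H_{T,x,y}\ge2|y-x|$ and $L_{\mathrm{sc}}\ge(2+\sqrt d)/4$.
\item Your work count omits the $N^2+2dN$ added-path edges, the $O(NS_{\mathrm{cell}})$ first-violation classification and the one-time graph construction. All of these are dominated by $n^Q$.
\end{itemize}
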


\section{Minimax optimality}\label{sec:minimax}
The preceding sections construct exact unconditional diffusion and bridge samplers.
We now show that these constructions are minimax optimal over all exact query-only
algorithms under the coefficient class and work model of
Section~\ref{sec:setup}: jointly
in the horizon $T$ and the number of requested observations $m$
for unconditional sampling, and in $m$ for bridge sampling with fixed
horizon and endpoints. 

We begin with the minimax formulation. Fix $d,k,\Mc,\lambda,\Lambda$ and the primitive costs, and assume
that the coefficient class $\mathcal F_T$ of
Section~\ref{sec:setup} is nonempty. For $T>0$ and an integer $m\ge1$,
we define the class of requested observations
\begin{align}
 \mathcal G_F(T,m)
 &=\{(t_1,\ldots,t_m):0<t_1<\cdots<t_m\le T\},\nonumber\\
 \mathcal G_B(T,m)
 &=\{(t_1,\ldots,t_m):0<t_1<\cdots<t_m<T\}.
 \label{cx:grids}
\end{align}

\begin{definition}\label{cx:admissible}
An admissible algorithm receives the class bounds, horizon, endpoint
data and requested grid, uses only the coefficient queries and
primitives specified in Section~\ref{sec:model}, and terminates almost
surely with an exact sample for every allowed choice of coefficients
and inputs in its target family. Its expected work may be infinite.
For fixed endpoints, define
\begin{align}
 \mathfrak R_F^x(T,m)
 &=\inf_A\sup_{\substack{(b,\sigma)\in\mathcal F_T\\
                         \mathbf t\in\mathcal G_F(T,m)}}
       \E_{b,\sigma}\Wk_A(T,x,\mathbf t),\nonumber\\
 \mathfrak R_B^{x,y}(T,m)
 &=\inf_A\sup_{\substack{(b,\sigma)\in\mathcal F_T\\
                         \mathbf t\in\mathcal G_B(T,m)}}
       \E_{b,\sigma}\Wk_A(T,x,y,\mathbf t).
 \label{cx:risks}
\end{align}
The infima are over admissible query-only algorithms for the respective
families. A single algorithm within each supremum must handle all
coefficients and grids in that supremum; it accesses the unknown
coefficients only through point queries.
\end{definition}

The following theorem (proof can be found in Section \ref{app:minimax-proofs}) compares the work of our algorithms with these
minimax quantities. The unconditional comparison allows both $T$ and $m$
to vary. For bridges, we fix $T,x,y$ and compare the dependence on $m$.

\begin{theorem}[Minimax guarantees]\label{main:rates}\label{cx:rates}
The algorithms $A_F$ and $A_B$ of Theorems~\ref{fw:upper}
and~\ref{bridge:general} are admissible. There are constants $c,C>0$,
depending only on the fixed coefficient class and primitive costs,
such that the following hold.
\begin{enumerate}
\item[\rm(i)] For every $x\in\R^d$, $T>0$ and $m\ge1$,
\begin{equation*}
 \begin{aligned}
 c(T+m)&\le\mathfrak R_F^x(T,m)\le\sup_{\substack{(b,\sigma)\in\mathcal F_T\\
                      \mathbf t\in\mathcal G_F(T,m)}}
       \E_{b,\sigma}\Wk_{A_F}(T,x,\mathbf t)
 \le C(T+m).
 \end{aligned}
\end{equation*}
Thus $A_F$ is minimax optimal jointly in $T$ and $m$.

\item[\rm(ii)] For every fixed $T>0$ and $x,y\in\R^d$, there is
a finite constant $C_{T,|y-x|}>0$, depending only on $T$, $|y-x|$,
the coefficient class and primitive costs, such that for all $m\ge1$,
\begin{equation}\label{cx:bridge-rate}
 \begin{aligned}
 cm&\le\mathfrak R_B^{x,y}(T,m)\le\sup_{\substack{(b,\sigma)\in\mathcal F_T\\\mathbf t\in\mathcal G_B(T,m)}}
       \E_{b,\sigma}\Wk_{A_B}(T,x,y,\mathbf t)
 \le C_{T,|y-x|}m.
 \end{aligned}
\end{equation}
Consequently, $A_B$ is minimax optimal in $m$ for fixed $T,x,y$.
\end{enumerate}
\end{theorem}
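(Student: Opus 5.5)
The plan has three parts: upper bounds from the earlier theorems, a lower bound in $m$, and a lower bound in $T$.

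\emph{Upper bounds and admissibility.} These follow directly from Theorems~\ref{fw:upper} and~\ref{bridge:general}. Both theorems assert that $A_F$ and $A_B$ are query-only, terminate almost surely and sample their targets exactly for all admissible inputs, which is the content of Definition~\ref{cx:admissible}. Hence $\mathfrak R_F^x(T,m)$ and $\mathfrak R_B^{x,y}(T,m)$ are bounded by the suprema of the work of $A_F$ and $A_B$. For (i), \eqref{cx:forward-upper} together with $t_m\le T$ gives the bound $C(T+m)$. For (ii), fix $T,x,y$, so $H_{T,x,y}$ is fixed. Then \eqref{br:general-work} gives $C\{H^Q\log(2+H)+m\}\le C_{T,|y-x|}m$ for $m\ge1$, where $C_{T,|y-x|}=C\{H^Q\log(2+H)+1\}$.

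\emph{Lower bound in $m$.} This part is elementary. Any admissible algorithm must write $m$ output coordinates, and each output write has a fixed positive cost. So $\E\Wk\ge c\,m$ for every algorithm, every coefficient and every grid, in both the unconditional and the bridge settings.

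\emph{Lower bound in $T$ for (i).} This is the main obstacle. I would use an information argument: exactness forces at least order $T$ coefficient queries in expectation for some coefficient in $\mathcal F_T$. The construction starts from a fixed admissible pair, say $\sigma=\sqrt{\lambda'}\Id$ and $b=0$, adjusted to lie in the class. Partition $[0,T]$ into $\lfloor T\rfloor$ unit time slabs. For a sign vector $\epsilon$, add to the drift a small smooth bump $\epsilon_j\eta\,\phi(t-j)$ in slab $j$, with $\phi\in C_c^\infty$ time-only and $\eta$ small enough that all derivative bounds through order $k$ remain below $\Mc$. Because the bumps depend only on time, each slab's perturbation is detected only by queries with time coordinate in the support of $\phi(\cdot-j)$. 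For a single observation $t_1=T$, each slab shifts the mean of $X_T$ by $\pm\eta\int\phi$.

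The argument then runs as follows. Suppose that with probability at least $1/2$ the algorithm queries fewer than $\lfloor T\rfloor/2$ slabs. On that event, its output distribution is a function of the randomness and of the signs on the queried slabs only. Consider the $\epsilon$-averaged family and fix the queried signs. The exact law of $X_T$ is a Gaussian whose mean is shifted by an unqueried sum $\eta\int\phi\sum_{j\notin Q}\epsilon_j$. Flipping a single unqueried sign changes the target law. Since the algorithm's output conditional on the query transcript cannot depend on that sign, exactness fails for one of the two flipped instances.

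The delicate point, and the place I would spend effort, is the adaptivity of the set $Q$ of queried slabs. I would handle it with a standard adversary/coupling argument. Run the algorithm on $\epsilon$ and on $\epsilon^{(j)}$, which differs from $\epsilon$ only in slab $j$, with shared randomness. The two runs coincide until slab $j$ is queried. Summing over $j$ then shows that the output total-variation discrepancy is at most the sum over $j$ of $\Pp(j\text{ queried})$ deficits. Exactness for both instances forces $\Pp_\epsilon(j\text{ queried})\ge\mathrm{TV}(\text{laws})>0$, a class constant depending on $\eta$ and $\phi$ but not on $T$. Summing over the $\lfloor T\rfloor$ slabs gives an expected query count of at least $cT$. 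Each query has positive cost, so $\E\Wk\ge c\,T$ for some instance.

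For $T<1$, the bound $c(T+m)\ge c'$ follows from the $m$ term. Combining with the output bound gives $\max(cT,cm)\ge\tfrac c2(T+m)$, which completes the proof of (i).
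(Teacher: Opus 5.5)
Your upper bounds, admissibility and the output-cost lower bound $c\,m$ match the paper. The gap is in the lower bound in $T$.

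Your coupling step is sound. With a shared tape, the runs on $\epsilon$ and $\epsilon^{(j)}$ coincide unless slab $j$ is queried, so $\mathrm{TV}\le\Pp_\epsilon(j\text{ queried})$. But your claim that this total variation is a class constant independent of $T$ is false. Take a single observation at $t_1=T$ and constant $\sigma=\Sigma_0$. The exact output is Gaussian with covariance $T\Sigma_0\Sigma_0^\top$, and flipping one sign shifts its mean by only $2\eta\bar\phi\,e_1$, where $\bar\phi=\int\phi$. The total variation between the two exact laws is therefore of order $\eta\bar\phi/\sqrt T$. Summing over the $\lfloor T\rfloor$ slabs gives only $\E[\#\text{queried slabs}]\gtrsim\sqrt T$, which does not match the upper bound when $m\ll T$.

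This loss is intrinsic to a one-sign-at-a-time argument. The single-flip constraint $\Pp(j\text{ queried})\ge\mathrm{TV}$ is satisfied by query probabilities of order $T^{-1/2}$ per slab, so it cannot force more. Your preliminary heuristic fails for the same reason. Leaving a sign unqueried with positive probability does not break exactness, because exactness constrains only the mixture over the queried and unqueried events.

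The missing idea is to aggregate over all slabs through second moments, as the paper does.
\begin{itemize}
\item Put a fair prior on the signs, and let $S=\sum_j\epsilon_j$, $N=\lfloor T\rfloor$ and $\mu=\eta\bar\phi$.
\item Let $Y$ be the first coordinate of the last output minus $x_1$, and let $v=(\Sigma_0\Sigma_0^\top)_{11}$.
\item Reveal $\epsilon_j$ at the first query with time in $[j,j+1)$. Let $\mathcal H$ be generated by the query record, the revelations and the tape, with $N_R$ the number of revealed signs and $S_R$ their sum.
\end{itemize}
Exactness gives $\E[YS]=\mu N$ and $\E Y^2=\mu^2N+vT$. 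Unrevealed signs remain independent and fair given $\mathcal H$, so $\E[S\mid\mathcal H]=S_R$. Orthogonality of the revealed increments gives $\E S_R^2=\E N_R$. Cauchy--Schwarz then yields $\mu^2N^2=(\E[YS_R])^2\le(\mu^2N+vT)\,\E N_R$, hence $\E N_R\ge cT$. In words, the output must correlate with the whole sum $S$ at level $\mu N$, and only revealed signs can carry that correlation.

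A smaller point: take $\sigma$ constant equal to the value of some admissible diffusion coefficient at one point, rather than $\sqrt{\lambda'}\Id$. A scalar multiple of $\Id$ need not satisfy both the entrywise bound $\Mc$ and the ellipticity bounds.
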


\begin{remark}
For bridges, the minimax optimality established here concerns the
number of requested observations $m$, with the horizon and endpoints
fixed. The construction also improves the dependence on the horizon
relative to the natural terminal-rejection method of
Section~\ref{br5:terminal-rejection}. For equal endpoints, that
method has worst-case expected work at least $c_1e^{c_2T}$, as
witnessed by an admissible constant-coefficient diffusion, whereas
our sampler has the uniform bound $O\{(1+T)^Q\log(3+T)+m\}.$
Thus the horizon dependence is polynomial up to a logarithmic
factor. The sharp minimax dependence on the bridge horizon remains
open and is a direction of future research.
\end{remark}

\section{Conclusions and discussions}\label{sec:discussion}
We formulate a query-only computational model for exact diffusion
simulation and construct unconditional diffusion and bridge samplers
with finite expected total work. The algorithms access the
coefficients through point evaluations, and the work model accounts
for all operations, including preprocessing, coefficient queries,
auxiliary randomization, rejected proposals and output. The
constructions apply to uniformly elliptic multivariate diffusions
with bounded, time- and state-dependent coefficients under the
stated regularity assumptions, allowing nonconstant covariance and
nongradient drift. The unconditional sampler concatenates exact
short transitions, whose acceptance decisions are implemented by
transition coins obtained from randomized parametrix expansions.
For bridges, we develop a proposal based on a finite weighted graph
of spatial cells and time layers that incorporates both prescribed
endpoints. Coins for the unknown edge weights permit exact
graph-path sampling, followed by uniform sampling within the
selected cells and acceptance--rejection sampling. Additional
proposal components cover large increments and spatial exits,
preserving the full support of the continuous-state bridge.
For fixed endpoints and a fixed number of requested observations,
the resulting expected work grows at most polynomially as the
horizon increases, improving on the natural terminal-rejection
construction, whose expected work can grow exponentially.
Subsequent insertion of the requested observations adds linear
expected work without a minimum spacing condition. Matching lower
bounds in the same coefficient class and computational model
establish minimax optimality jointly in the horizon $T$ and the
number of requested observations $m$ for unconditional sampling,
and in $m$ for bridge sampling with fixed horizon and endpoints.

The sharp joint dependence of bridge sampling work on the horizon
and the number of requested observations remains open, as does
the optimal dependence on the endpoints. Reducing the cost of
graph construction and path sampling, together with stronger
information lower bounds, may help resolve these questions.
Further directions include applications of the exact samplers to
inference for discretely observed diffusions, conditional
simulation and stochastic optimization.

\appendix

\section{Work accounting}
\label{app:accounting}\label{supp:sup:section4}

The computational conventions in Section~\ref{sec:model} charge a fixed
positive finite cost for each scalar primitive. Matrix operations are
expanded into scalar operations, all of which are charged. Throughout the analysis,
the dimension $d$ is treated as a fixed parameter. Integer budgets
and ceilings are computed by counted loops. A product with a random
number of factors charges each multiplication and the work of generating
each factor. We generate Poisson and geometric random variables using the basic
random draws and operations listed above, which are all included in the total work. Known class constants are calibrated once and their finite preprocessing cost is included.

Allocation and disposal of temporary states, index updates, rejected
trials and every recursive call count toward $\Wk$. Distinct calls use
fresh randomness unless shared samples are explicitly required. A coin
with a known bias is obtained from an independent uniform comparison. The following lemma accounts for all trials of a rejection procedure,
including the successful trial.

\begin{lemma}[Work through the first successful trial]\label{supp:fw:retry}
Conditional on a fixed history $\mathcal H$, let
$(C_k,S_k,Y_k)_{k\ge1}$ be independent identically distributed trial records,
where $C_k\ge0$ is the work of trial $k$, $S_k\in\{0,1\}$ indicates success (or not), and $Y_k$ is the proposed output. Suppose
$\Pp(S_k=1\mid\mathcal H)=q>0$ and
$\E(C_k\mid\mathcal H)\le c$. Then, for
$\nu=\inf\{k:S_k=1\}$, we have $\nu<\infty$ almost surely and $\E\left[\sum_{k=1}^{\nu} C_k\,\middle|\,\mathcal H\right]\le c/q.$
The successful trial's output has the one-trial conditional law given
success.
\end{lemma}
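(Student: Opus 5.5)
The plan is the standard renewal argument for i.i.d.\ trials, carried out conditionally on $\mathcal H$. All probabilities and expectations below are conditional on $\mathcal H$, and I suppress this in the notation.

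First, termination. Since the $S_k$ are i.i.d.\ with success probability $q>0$, we have $\Pp(\nu>k)=(1-q)^k\to0$. Hence $\nu<\infty$ almost surely, $\nu$ is geometric with parameter $q$, and $\E\nu=1/q$.

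Second, the work bound. I would write
\[
 \sum_{k=1}^{\nu}C_k=\sum_{k\ge1}C_k\ind\{\nu\ge k\}
\]
and use monotone convergence, which is valid because $C_k\ge0$. The event $\{\nu\ge k\}=\{S_1=\cdots=S_{k-1}=0\}$ depends only on records $1,\ldots,k-1$, so it is independent of $C_k$. Therefore
\[
 \E\sum_{k=1}^{\nu}C_k=\sum_{k\ge1}\E C_k\,\Pp(\nu\ge k)\le c\sum_{k\ge1}(1-q)^{k-1}=\frac{c}{q}.
\]
This is Wald's identity. The point to emphasize is that $C_k$ may be correlated with $S_k$ within the same trial, for instance because a successful trial may cost more, but it is independent of the earlier trials. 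That is all the argument needs; no independence between $C_k$ and $S_k$ is required.

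Third, the output law. For a measurable set $A$, decompose according to the value of $\nu$:
\[
 \Pp(Y_\nu\in A)=\sum_{k\ge1}\Pp(S_1=\cdots=S_{k-1}=0)\,\Pp(S_k=1,\,Y_k\in A).
\]
By independence across trials this equals $\sum_k(1-q)^{k-1}\Pp(S_1=1,Y_1\in A)$, which simplifies to $\Pp(Y_1\in A\mid S_1=1)$.

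The only subtle step is the within-trial dependence between cost and success. The decomposition over $\{\nu\ge k\}$ handles it cleanly, so I expect no real obstacle.
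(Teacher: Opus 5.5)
Your proposal is correct and follows essentially the same route as the paper: $\nu$ is geometric, the total work is written as $\sum_{k\ge1}\ind\{\nu\ge k\}C_k$ with $\{\nu\ge k\}$ independent of $C_k$, (conditional) Tonelli gives the bound $c/q$, and the output law comes from decomposing over $\{\nu=k\}$. Your remark that $C_k$ may depend on $S_k$ within a trial matches how the paper later uses this lemma, for example in Lemma~\ref{br5:ratio}.
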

\begin{proof}
Conditional on $\mathcal H$, $\nu$ is geometric with parameter $q$.
The event $\{\nu\ge k\}$ depends only on the first $k-1$ trials and is
conditionally independent of $C_k$. Then by the conditional Tonelli theorem, we have
\[
 \E\left[\sum_{k\ge1}\ind\{\nu\ge k\}C_k
       \,\middle|\,\mathcal H\right]
 =\sum_{k\ge1}(1-q)^{k-1}\E(C_k\mid\mathcal H)\le c/q.
\]
If $Y_k$ is the output in trial $k$, then for a measurable set $A$,
\[
 \Pp(Y_\nu\in A\mid\mathcal H)
 =\sum_{k\ge1}(1-q)^{k-1}
       \Pp(Y_k\in A,S_k=1\mid\mathcal H)
 =\Pp(Y_1\in A\mid S_1=1,\mathcal H),
\]which finishes the proof.
\end{proof}

\section{Details of exact diffusion sampling}
\label{app:forward-proofs}\label{supp:ana:foundation}

We first deduce the unconditional sampling guarantee from the short-transition routine.
The subsequent lemmas establish the representation, randomization and
coin constructions on which that routine depends. All constants are
computed from the stated class bounds, and all work is measured using
Appendix~\ref{app:accounting}.

\begin{proof}[Proof of Theorem~\ref{fw:upper}]
Lemma~\ref{core:shorttransition} gives the exact conditional law and a
uniform conditional expected-work bound for every short step in
Algorithm~\ref{fw:algorithm}. Successive conditioning and
Chapman--Kolmogorov identity, integrating out auxiliary states, therefore give
\eqref{eq:forwardtarget}. The routines use only coefficient values.
Their constants are calibrated on unit windows from the class bounds;
this has fixed cost and requires no coefficient queries outside $[0,T]$,
including when $T<1$.

The integers $n_i$ can be computed by counting loops in
$O(N_{\mathrm{short}})$ work. Forming the intermediate times,
managing states and writing outputs costs
$O(N_{\mathrm{short}}+m)$. Summing the conditional work bounds and
using \eqref{fw:counts} gives
\[
 \E\Wk_{A_F}\le C_0(1+N_{\mathrm{short}}+m)
 \le C(t_m+m),
\]
since $h_{\mathrm{env}}$ is a class constant and $m\ge1$.
\end{proof}

\subsection{The parametrix representation}\label{sec:B.1}

\begin{lemma}[Parametrix representation]\label{core:parametrix}
\label{supp:core:parametrix}
Under Assumption~\ref{ass:class}, for every fixed
$T_{\mathrm{win}}>0$ there are computable constants $C_g,C_H,C_p^*>0$
such that, for $0<h=t-s\le\min\{T_{\mathrm{win}},T\}$,
\eqref{core:parametrix-series} converges absolutely to $p(e)$ and
\[
 g(e)\le C_gG_{\kappa,h}(w),\qquad
 |H(e)|\le C_Hh^{-1/2}G_{\kappa,h}(w),\qquad
 0\le R(e)\le C_p^*,\qquad \kappa=2\Lambda.
\]
The constants depend only on $d,\Mc,\lambda,\Lambda,T_{\mathrm{win}}$.
As a result, the
Chapman--Kolmogorov identity holds pointwise.
\end{lemma}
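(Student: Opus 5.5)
The plan follows the classical Levi parametrix argument (Friedman; see also Bally--Kohatsu-Higa), with every constant tracked explicitly. The first step is the bound on $g$. Since $\lambda\Id\preceq A\preceq\Lambda\Id$, we have $w^\top A^{-1}w\ge|w|^2/\Lambda$ and $\det A\ge\lambda^d$. Hence
\[
 g(e)\le(2\pi h)^{-d/2}\lambda^{-d/2}e^{-|w|^2/(2\Lambda h)}
 =(2\Lambda/\lambda)^{d/2}\,G_{\kappa,h}(w)\,e^{-|w|^2/(4\Lambda h)},
\]
where $\kappa=2\Lambda$. The leftover Gaussian factor $e^{-|w|^2/(4\Lambda h)}$ is then available to absorb polynomial factors in $|w|/\sqrt h$, and we may take $C_g=(2\Lambda/\lambda)^{d/2}$.

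For $H$, we use formula \eqref{core:defect}. The drift term satisfies $|b\cdot\ell|\le\sqrt d\,\Mc\,\lambda^{-1}|w|/h$. For the covariance term, the Lipschitz bound on $\sigma$ (derivatives bounded by $\Mc$, with $k\ge1$) gives
\[
 |a(s,u)-A|\le C\Mc^2\bigl(\sqrt{d}\,(t-s)+|v-u|\bigr),
\]
where $|\cdot|$ is the entrywise or operator norm and the time increment is at most $h\le T_{\mathrm{win}}$. We also have $|\ell\ell^\top-A^{-1}/h|\le\lambda^{-2}|w|^2/h^2+d\lambda^{-1}/h$. Writing $\rho=|w|/\sqrt h$, the bracket in $H/g$ is therefore bounded by $C h^{-1/2}(1+\rho+\rho^3)(1+\sqrt h)$. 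Combining this with the bound on $g$ and $\sup_\rho(1+\rho+\rho^3)e^{-\rho^2/(4\Lambda)}<\infty$, which is an explicit computable constant, yields $|H|\le C_Hh^{-1/2}G_{\kappa,h}(w)$ for $h\le T_{\mathrm{win}}$.

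The second step is convergence of the series. The Gaussian semigroup identity $\int G_{\kappa,r-s}(z-u)G_{\kappa,t-r}(v-z)\dd z=G_{\kappa,t-s}(v-u)$ is the key tool. Applying it together with the Beta integral and induction on $j$ gives
\[
 |g\otimes H^{\otimes j}|(e)\le C_gC_H^j\,G_{\kappa,h}(w)\,
 \frac{\Gamma(1/2)^j h^{j/2}}{\Gamma(1+j/2)}
 =C_gC_H^j\,\mathsf Z_j(h)\,G_{\kappa,h}(w).
\]
This is exactly the normalizer in \eqref{core:Tnproperties}. The right-hand side is summable in $j$, and its sum is bounded uniformly for $h\le T_{\mathrm{win}}$ by $C_g\sum_j C_H^j\mathsf Z_j(T_{\mathrm{win}})$. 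Absolute convergence follows.

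The third step identifies the sum with $p$ and is the main obstacle. The plan is to quote the standard parametrix theorem. Under Hölder-continuous, bounded, uniformly elliptic coefficients, it gives that $\Phi=\sum_{j\ge1}H^{\otimes j}$ solves the Volterra equation $\Phi=H+H\otimes\Phi$. It also gives that $g+g\otimes\Phi$ is the fundamental solution of the backward Kolmogorov equation, and that this fundamental solution coincides with the continuous transition density of the unique strong solution. The pointwise identification can be argued directly: $u(s,x)=\int\phi(y)\,(g+g\otimes\Phi)(s,x;t,y)\dd y$ solves $(\partial_s+\calL_s)u=0$ with terminal value $\phi$. Dynkin/Itô applied to $u(r,X_r)$ then gives $u(s,x)=\E\phi(X_t)$. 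Joint continuity of both kernels upgrades the resulting a.e. equality to a pointwise one. One technical point needs care. The generator is written with coefficients at $(s,u)$ and the kernel is frozen at the terminal point $(t,v)$, so we are in the backward parametrix. We must check that the singularity $H\sim h^{-1/2}$ (rather than $h^{-1}$) is integrable in the convolutions; this integrability is exactly what the Beta-function computation above provides.

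Finally, summing the bounds gives $0\le R(e)\le C_p^*:=C_g\sum_{j\ge0}C_H^j\mathsf Z_j(T_{\mathrm{win}})$. Nonnegativity holds because $p$ is a density. Chapman--Kolmogorov holds pointwise because it holds almost everywhere for any version of the density, by the Markov property, and both sides are continuous in the intermediate variable by the continuity just established.
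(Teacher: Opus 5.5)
Your bounds on $g$ and $H$ and the Beta/Dirichlet estimate $|g\otimes H^{\otimes j}|\le C_gC_H^j\mathsf Z_j(h)G_{\kappa,h}$ are essentially the paper's, line for line. The two arguments differ only in how the sum is identified with $p$.

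You quote the classical Levi--Friedman theorem that $g+g\otimes\Phi$ is a fundamental solution of the backward equation. You then use It\^o's formula on $u(r,X_r)$ to match it with the transition density.

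The paper never applies $\partial_s+\calL_s$ to $g\otimes\Phi$. Because $g$ is frozen at the terminal point, $F_{t,f}(q,z)=\int g(q,z;t,v)f(v)\,\dd v$ is an explicit smooth function of the initial variables, with $(\partial_q+\calL_q)F_{t,f}(q,z)=\int H(q,z;t,v)f(v)\,\dd v$. It\^o's formula on $[s,\beta]$ then applies directly. An $O(\sqrt h)$ terminal-approximation estimate for Lipschitz $f$ lets one pass to $\beta\uparrow t$, which yields the Duhamel identity \eqref{supp:ana:measure-duhamel} for the transition \emph{measure}. Iterating it $m$ times gives $\sum_{n<m}K_n$ plus a remainder whose $L^1(\dd v)$ norm is at most $C_H^m\mathsf Z_m(h)\to0$. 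This proves at once that the law has a density and that this density is the series. Joint continuity of the $K_n$ and locally uniform convergence then give equality with the continuous version at every point.

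What the paper's route buys is self-containedness. The hard part of your route is the H\"older regularity of $\Phi(r,z;t,v)$ in $z$, which is needed to differentiate $g\otimes\Phi$ twice and obtain $(\partial_s+\calL_s)(g\otimes\Phi)=-\Phi+H\otimes\Phi$. That step sits entirely inside the quoted theorem, so your ``direct'' identification is direct only once the theorem is granted. The paper needs nothing beyond the pointwise bounds you already derived. Your route is shorter given the reference, and it is valid, since Assumption~\ref{ass:class} implies the classical H\"older hypotheses.

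Two small repairs:

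\begin{enumerate}
\item $C_p^*$ enters the algorithm through $b_{\mathrm{env}}$, so it must be obtainable with finitely many primitive operations. Replace your series by a closed-form bound, for example $\sum_{j\ge0}u^j/\Gamma(1+j/2)\le(1+2u/\sqrt\pi)e^{u^2}$ with $u=C_H\sqrt{\pi T_{\mathrm{win}}}$, as the paper does.
\item For Chapman--Kolmogorov, the continuity you need is in the endpoint $v$, not in the intermediate variable. The map $v\mapsto\int p(s,u;r,z)p(r,z;t,v)\,\dd z$ is continuous by dominated convergence, using the Gaussian bound on the second factor. Both sides agree for a.e.\ $v$ by the Markov property, hence everywhere.
\end{enumerate}
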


For the proof, write $\|\cdot\|_F$ for the Frobenius norm and
$P_{s,u;t}$ for the transition probability from $(s,u)$ to time $t$.
Set $K_0=g$ and $K_j=g\otimes H^{\otimes j}$ for $j\ge1$.

\begin{proof}
Fix $T_{\mathrm{win}}>0$ and restrict attention to $h\le\min\{T_{\mathrm{win}},T\}$.
Throughout the proof, we suppress the dependence of $C_H$ and $C_p^*$
on $T_{\mathrm{win}}$ in the notation.

The product rule gives $|\partial_\xi a_{ij}|\le2d\Mc^2$ for every
time or spatial coordinate $\xi$. Hence
\[
 |b(s,u)|\le B_b:=\sqrt d\,\Mc,\qquad
 \|a(s,u)-a(t,v)\|_F\le L_a(h+|w|),
\] where $L_a:=2d^{5/2}\Mc^2.$
With $r=|w|/\sqrt h$ and $\kappa=2\Lambda$, ellipticity yields $\frac{g(e)}{G_{\kappa,h}(w)}\le C_g e^{-r^2/(4\Lambda)} \leq C_g,$ with $C_g:=\left(\frac{2\Lambda}{\lambda}\right)^{d/2}.$
Using $|A^{-1}w/h|\le r/(\lambda\sqrt h)$ in
\eqref{core:defect}, we obtain $|H(e)|\le h^{-1/2}P_{T_{\mathrm{win}}}(r)g(e),$ where
\[
 P_{T_{\mathrm{win}}}(r):=\frac{B_b}{\lambda}r+
 \frac{L_a}{2}(r+\sqrt{T_{\mathrm{win}}})
 \left(\frac{r^2}{\lambda^2}+\frac{\sqrt d}{\lambda}\right)
 =\sum_{j=0}^3\alpha_jr^j.
\]
Since $\sup_{r\ge0}r^je^{-r^2/(4\Lambda)}=(2j\Lambda/e)^{j/2}$
for $j\ge1$, the computable choice
\[
C_H:=C_g\left[\alpha_0+
\sum_{j=1}^3\alpha_j(2j\Lambda/e)^{j/2}\right]
\]
gives $g(e)\le C_gG_{\kappa,h}(w)$ and $|H(e)|\le C_Hh^{-1/2}G_{\kappa,h}(w).$

For a bounded Lipschitz function $f$ and time pair $q < t$, we denote $F_{t,f}(q,z)=\int g(q,z;t,v)f(v)\,\dd v.$
Then we have
\begin{equation*}
 \sup_z|F_{t,f}(t-h,z)-f(z)|
 \le\sqrt h\left(C\|f\|_\infty+
 \sqrt{d\Lambda}\operatorname{Lip}(f)\right),
\end{equation*}
where $C$ depends only on $d,\lambda,\Lambda,L_a$.

For $f\in C_c^\infty(\R^d)$ and $s<\beta<t$, the initial-variable
derivatives of $F_{t,f}$ are bounded on $[s,\beta]\times\R^d$.
It\^o's formula and \eqref{core:defect} therefore imply
\[
 \E F_{t,f}(\beta,X_\beta)-F_{t,f}(s,u)
 =\int_s^\beta\int P_{s,u;q}(\dd z)
       \int H(q,z;t,v)f(v)\,\dd v\,\dd q.
\]
The terminal approximation and continuity of $X$ identify the limit
of the left side as $\beta\uparrow t$. The absolute integral on the
right is at most $2C_H\sqrt h\,\|f\|_\infty$. Therefore, by Fubini's theorem and dominated convergence theorem, 
\begin{equation}\label{supp:ana:measure-duhamel}
 P_{s,u;t}(\dd v)=\left[g(s,u;t,v)+
 \int_s^t\int H(q,z;t,v)P_{s,u;q}(\dd z)\,\dd q\right]\dd v.
\end{equation}

For ordered times $s<q_1<\cdots<q_n<t$, set $q_0=s$, $q_{n+1}=t$
and $\Delta_i=q_{i+1}-q_i$.
Gaussian convolution in the definition of $K_n$, followed by the
Dirichlet integral, gives
\begin{align}
 \mathsf Z_n(h)
&=\int_{\substack{\Delta_1,\ldots,\Delta_n>0\\\Delta_1+\cdots+\Delta_n<h}}\prod_{i=1}^n\Delta_i^{-1/2}\,\dd\Delta_1\cdots\dd\Delta_n
   =\frac{\pi^{n/2}h^{n/2}}{\Gamma(1+n/2)}
 \label{supp:ana:term-bound}
\end{align} and $|K_n(e)|\le C_gC_H^n\mathsf Z_n(h)G_{\kappa,h}(w).$
In particular, all convolution integrals are absolutely convergent.
Iterating \eqref{supp:ana:measure-duhamel} gives
\[
 P_{s,u;t}(\dd v)=
 \left[\sum_{n=0}^{m-1}K_n(s,u;t,v)+\mathcal R_m(s,u;t,v)\right]\dd v,
 \qquad
 \|\mathcal R_m(s,u;t,\cdot)\|_1\le C_H^m\mathsf Z_m(h).
\]
Since $\frac{C_H^{n+2}\mathsf Z_{n+2}(h)}{C_H^n\mathsf Z_n(h)}
 =\frac{\pi C_H^2h}{1+n/2}\longrightarrow0,$
the remainder tends to zero and the series converges absolutely
and in $L^1(\dd v)$.
Also, each $K_n$ is jointly continuous for $s<t$.
Then by the bound \eqref{supp:ana:term-bound} the parametrix series is locally uniformly convergent on $s<t$.
Its sum is therefore continuous and, by the vanishing remainder,
agrees with the continuous transition density at every endpoint: $p(e)=g(e)+\sum_{n\ge1}K_n(e).$
Hence,
$p=g+p\otimes H$ pointwise.

Finally, let $u_h=C_H\sqrt{\pi h}$. Splitting the series into even
and odd terms and using
$\Gamma(j+3/2)\ge(\sqrt\pi/2)j!$ gives $\sum_{n\ge0}\frac{u_h^n}{\Gamma(1+n/2)}
 \le(1+2u_h/\sqrt\pi)e^{u_h^2}.$
Consequently, with $u_*:=C_H\sqrt{\pi T_{\mathrm{win}}},$
$0\le R(e)\le C_p^*:=C_g(1+2u_*/\sqrt\pi)e^{u_*^2}.$
All constants depend only on $d,\Mc,\lambda,\Lambda,T_{\mathrm{win}}$.
Taking $T_{\mathrm{win}}=1$ gives the constants used in the main text.
The Markov property implies the Chapman--Kolmogorov identity almost
everywhere in the final endpoint. Continuity and the Gaussian bound
make its integral continuous there, so the identity holds pointwise.
\end{proof}

\subsection{Random corrections and the local estimator}\label{sec:B.2}

\begin{lemma}[Randomized convolution term]\label{core:randomterm}
For every $j\ge1$ and $0<h\le1$, a query-only construction generates
$\Upsilon_j$ satisfying \eqref{core:Tnproperties} in $O(j+1)$ work,
uniformly in the edge and coefficient instance.
\end{lemma}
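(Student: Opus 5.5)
The plan is to write $K_j=g\otimes H^{\otimes j}$ as an iterated integral over ordered times $s<q_1<\cdots<q_j<t$ and intermediate states $z_1,\ldots,z_j$:
\[
 K_j(e)=\int\!\!\int g(s,u;q_1,z_1)\prod_{i=1}^{j}H(q_i,z_i;q_{i+1},z_{i+1})\,\dd z\,\dd q,
 \qquad q_{j+1}=t,\quad z_{j+1}=v .
\]
We then realise $K_j(e)/G_{\kappa,h}(w)$ as an importance-sampling expectation. The sampling law for the times will be proportional to the singular weight appearing in the defect bound. The sampling law for the states will be a Gaussian bridge from $(s,u)$ to $(t,v)$ with variance parameter $\kappa$.

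\textbf{Time law.} Set $\Delta_0=q_1-s$ and $\Delta_i=q_{i+1}-q_i$ for $1\le i\le j$. The density of the increments on the simplex $\{\Delta_1,\ldots,\Delta_j>0,\ \sum_{i\ge1}\Delta_i<h\}$ is taken to be $\prod_{i=1}^j\Delta_i^{-1/2}/\mathsf Z_j(h)$, with the last increment $\Delta_j$ ending at $t$, as in the Dirichlet integral \eqref{supp:ana:term-bound}. This law is a scaled Dirichlet$(1/2,\ldots,1/2,1)$ vector. We sample it as follows: draw $j$ independent $\mathrm{Gamma}(1/2)$ variables, each equal to $\xi^2/2$ for $\xi\sim\calN(0,1)$, together with one $\mathrm{Exp}(1)$ variable, equal to $-\log U$. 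We normalise the vector by its sum and multiply by $h$. This costs $O(j+1)$ primitive operations.

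The normaliser $\mathsf Z_j(h)$ is obtained by the recurrence $\mathsf Z_{j}(h)=\mathsf Z_{j-2}(h)\,\pi h/(j/2)$, started from $\mathsf Z_0=1$ and $\mathsf Z_1=2\sqrt h$. This is also $O(j)$ work and needs no Gamma primitive.

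\textbf{Space law.} Given the times, we draw $(z_1,\ldots,z_j)$ from the Brownian-bridge marginals of a process with covariance $\kappa\Id$ pinned at $(s,u)$ and $(t,v)$. We generate them sequentially: $z_{i}$ given $z_{i-1}$ is Gaussian with mean $z_{i-1}+\frac{q_i-q_{i-1}}{t-q_{i-1}}(v-z_{i-1})$ and variance $\kappa\frac{(q_i-q_{i-1})(t-q_i)}{t-q_{i-1}}$. This costs $O(jd)$, which is $O(j+1)$ for fixed $d$. The joint density of these states equals $\prod_{i=0}^{j}G_{\kappa,\Delta_i}(z_{i+1}-z_i)/G_{\kappa,h}(w)$, with $z_0=u$.

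\textbf{Estimator.} We define
\[
 \Upsilon_j=\mathsf Z_j(h)\,\frac{g(s,u;q_1,z_1)}{G_{\kappa,\Delta_0}(z_1-u)}
 \prod_{i=1}^{j}\frac{\Delta_i^{1/2}H(q_i,z_i;q_{i+1},z_{i+1})}{G_{\kappa,\Delta_i}(z_{i+1}-z_i)} .
\]
Each factor is computed from $O(1)$ coefficient queries through \eqref{core:defect}, so $\Upsilon_j$ costs $O(j+1)$.

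Unbiasedness follows directly. The proposal density is $\prod_i\Delta_i^{-1/2}\mathsf Z_j(h)^{-1}$ times the bridge density. When we divide the integrand by it, the $G_{\kappa,h}(w)$ factor and the Gaussian factors cancel. This gives $\E\Upsilon_j=K_j(e)/G_{\kappa,h}(w)$, and Fubini is justified by the absolute convergence established in Lemma~\ref{core:parametrix}.

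\textbf{Almost-sure bound.} By Lemma~\ref{core:parametrix}, applied on each subinterval with $\Delta_i\le h\le1$, we have $g\le C_gG_{\kappa,\Delta_0}$ and $|H|\le C_H\Delta_i^{-1/2}G_{\kappa,\Delta_i}$. Substituting these bounds into each factor gives $|\Upsilon_j|\le C_gC_H^j\mathsf Z_j(h)$ almost surely.

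\textbf{Main obstacle.} The delicate step is matching the exponents exactly. The $\Delta_i^{-1/2}$ singularities must be assigned to the $H$-steps and not to the initial $g$-step, so that the time law is a proper Dirichlet law with normaliser exactly $\mathsf Z_j(h)$. The Gaussian bridge must also use the same $\kappa$ as the envelope, so that the ratios are pointwise bounded. A further point is the boundary case where some $\Delta_i\to0$. There the weights remain bounded because the ratio bounds hold uniformly for every positive duration. Events of measure zero, such as ties, are excluded almost surely.
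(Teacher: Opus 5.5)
Your proposal is correct and follows the paper's proof essentially step for step: the same scaled $\mathrm{Dirichlet}(1,1/2,\ldots,1/2)$ gap law built from one exponential and $j$ halved squared Gaussians, the same sequential Gaussian bridge with covariance parameter $\kappa\Id$, the same estimator \eqref{core:random-correction}, the same cancellation argument for the mean and factorwise bound for $|\Upsilon_j|$, and the same Gamma-free recurrence for $\mathsf Z_j(h)$. The only cosmetic difference is that the paper explicitly returns zero on the null event of a vanishing gap before forming any denominator, whereas you simply note that such events have probability zero.
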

\begin{proof}
We first describe the algorithm, and then show that is satisfies \eqref{core:Tnproperties} with the work guarantee. Draw independent $U_0\sim\operatorname{Uniform}(0,1)$ and
$\chi_i\sim\calN(0,1)$, $1\le i\le j$, and set
\begin{equation}\label{core:randomgaps}
 Q_0=-\log U_0,\qquad Q_i=\chi_i^2/2\ (1\le i\le j),
 \qquad \Delta_i=\frac{hQ_i}{\sum_{l=0}^jQ_l}.
\end{equation}
On the null event that any gap is zero, return zero before forming
denominators. Otherwise the normalized gaps have the Dirichlet law
with parameters $(1,1/2,\ldots,1/2)$; their density on the gap simplex
is $\mathsf Z_j(h)^{-1}\prod_{i=1}^j\Delta_i^{-1/2}$.
Let $\tau_0=s$, $\tau_{i+1}=\tau_i+\Delta_i$, $z_0=u$ and
$z_{j+1}=v$. Conditional on the gaps, generate the intermediate
states of a Gaussian bridge with covariance parameter $\kappa\Id$.
Its joint density is
\[
 \frac{\prod_{i=0}^jG_{\kappa,\Delta_i}(z_{i+1}-z_i)}
 {G_{\kappa,h}(v-u)}.
\]
For an explicit implementation, draw $z_i$, $1\le i\le j$,
successively with conditional mean and covariance
\[
 z_{i-1}+\frac{\tau_i-\tau_{i-1}}{t-\tau_{i-1}}(v-z_{i-1}),
 \qquad
 \frac{(\tau_i-\tau_{i-1})(t-\tau_i)}{t-\tau_{i-1}}\kappa\Id.
\]
Writing $G_i=G_{\kappa,\Delta_i}(z_{i+1}-z_i)$, define
\begin{equation}\label{core:random-correction}
 \Upsilon_j=\mathsf Z_j(h)
 \frac{g(s,u;\tau_1,z_1)}{G_0}
 \prod_{i=1}^j
 \frac{\sqrt{\Delta_i}H(\tau_i,z_i;\tau_{i+1},z_{i+1})}{G_i}.
\end{equation}
Cancellation against the gap and bridge densities proves the
expectation in \eqref{core:Tnproperties}. The bounds of
Lemma~\ref{core:parametrix} applied to each factor give its pathwise
bound. Absolute integrability follows from the same bounds.

All gaps, bridge states and factors require $O(j+1)$ scalar work in
fixed dimension. The normalizers are evaluated without the Gamma
function using the quantity
\[
 \mathsf Z_0(h)=1,\qquad \mathsf Z_1(h)=2\sqrt h,
 \qquad \mathsf Z_{j+2}(h)=\frac{2\pi h}{j+2}\mathsf Z_j(h).
\]Therefore, the proof is finished.
\end{proof}

\begin{lemma}[Local estimator]\label{core:localpositive}
The local estimator $\hat{R}_{\mathrm{loc}}$ is unbiased for $R(e)$
and has uniformly bounded expected work for $0<h\le1$.
For each $L\ge1$, there is a computable $h(L)>0$ such that
$h\le h(L)$ and $|w|\le L\sqrt h$ imply \eqref{core:localbounds}.
The work at selected order $J$ is $O(J+1)$.
\end{lemma}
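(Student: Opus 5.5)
We prove the three claims in turn: unbiasedness and work, the almost-sure two-sided bound, and the explicit choice of $h(L)$.

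\emph{Unbiasedness and work.} Conditioning on $J$ and using \eqref{core:Tnproperties} from Lemma~\ref{core:randomterm} gives
\[
 \E[2^J\Upsilon_J]=\sum_{j\ge1}2^{-j}2^j\,\frac{(g\otimes H^{\otimes j})(e)}{G_{\kappa,h}(w)}.
\]
The interchange of sum and expectation is justified by absolute convergence. Indeed,
\[
 \sum_j C_gC_H^j\mathsf Z_j(h)<\infty
\]
because $\mathsf Z_j(h)$ decays like $1/\Gamma(1+j/2)$. Adding $r_0=g(e)/G_{\kappa,h}(w)$ and invoking the parametrix series \eqref{core:parametrix-series} of Lemma~\ref{core:parametrix} then yields $\E\widehat R_{\mathrm{loc}}=R(e)$.

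For the work, $r_0$ costs a fixed number of coefficient queries and scalar operations. By Lemma~\ref{core:randomterm}, $\Upsilon_J$ costs $O(J+1)$ given $J$, and drawing a geometric $J$ also costs $O(J)$. So the work at order $J$ is $O(J+1)$, and since $\E J=2$ the expected work is uniformly bounded.

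\emph{Almost-sure bounds.} The plan is to show $|2^J\Upsilon_J|\le r_0/4$ for every realization once $h$ is small and $|w|\le L\sqrt h$. We need a lower bound on $r_0$. Using $A\preceq\Lambda\Id$ and $\det A\le\Lambda^d$ with $|w|^2/h\le L^2$,
\[
 r_0=\frac{g(e)}{G_{\kappa,h}(w)}\ge\Big(\frac{2}{1}\Big)^{d/2}e^{-L^2/(2\lambda)+L^2/(4\Lambda)}=:c_L>0,
\]
which is explicit and computable; any crude explicit lower bound of this form suffices. For the upper bound on the correction, \eqref{core:Tnproperties} gives
\[
 |2^j\Upsilon_j|\le C_g(2C_H)^j\mathsf Z_j(h)=C_g(2C_H)^j\frac{\pi^{j/2}h^{j/2}}{\Gamma(1+j/2)}.
\]
For $h\le1$ and $j\ge1$, this is at most $C_g\sqrt h\,\sup_{j\ge1}(2C_H\sqrt\pi)^j/\Gamma(1+j/2)=:C_g\sqrt h\,M$, where $M$ is finite and computable. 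Alternatively we can bound it by $C_g\sqrt h\sum_j(2C_H\sqrt\pi)^j/\Gamma(1+j/2)$ using the even/odd estimate from the proof of Lemma~\ref{core:parametrix}. Choosing
\[
 h(L)=\min\{1,(c_L/(4C_gM))^2\}
\]
then gives $|2^J\Upsilon_J|\le r_0/4$ almost surely, hence $\tfrac34r_0\le\widehat R_{\mathrm{loc}}\le\tfrac54r_0$. Finally, $r_0\le C_g$ by Lemma~\ref{core:parametrix}, so $\tfrac54 r_0\le\tfrac54C_g<\tfrac32C_g=B_{\mathrm{loc}}$.

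\emph{Main obstacle.} The delicate step is obtaining an explicit, uniform lower bound on $r_0$ depending only on $L$ and the class constants, and making the $j$-uniform bound on $2^j\mathsf Z_j(h)C_H^j$ extract a full factor $\sqrt h$. This second point works because every $j\ge1$ term carries at least $h^{1/2}$ while the Gamma factor absorbs the geometric growth $2^j$. Both constants are computable from $d,\Mc,\lambda,\Lambda$, so $h(L)$ is computable.
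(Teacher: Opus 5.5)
Your proof is correct and follows the paper's argument. Like the paper, you use an ellipticity lower bound on $r_0$ (yours keeps the harmless extra factor $e^{L^2/(4\Lambda)}$), apply the pathwise bound \eqref{core:Tnproperties} to force $|2^J\Upsilon_J|\le r_0/4$ for small $h$, and use $r_0\le C_g$ for the strict upper bound $B_{\mathrm{loc}}$. The only difference is bookkeeping: the paper chooses $h(L)$ so that $2C_H\sqrt{\pi h}\le\delta_L\le 1/2$ and uses $\Gamma(1+J/2)\ge 1/2$, which avoids your supremum over $j$ and gives $h(L)$ in closed form, though your even/odd series bound makes your constant $M$ equally explicit.
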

\begin{proof}
Generate $J$ by counted fair-coin trials until the first success;
then $\Pp(J=j)=2^{-j}$ and $\E J=2$.
The absolute convolution bounds and
Lemma~\ref{core:randomterm} justify summing the conditional
expectations, giving $\E\widehat R_{\mathrm{loc}}=R(e)$.
They also give the asserted work bound.

For the pathwise assertion, ellipticity gives
\[
 r_0\ge\rho_L:=(\kappa/\Lambda)^{d/2}e^{-L^2/(2\lambda)}
 \quad\text{if }|w|\le L\sqrt h.
\]
Choose
\begin{equation*}
 \delta_L=\min\{1/2,\rho_L/(8C_g)\},\qquad
 h(L)=\min\{1,\delta_L^2/[4\pi(1+C_H)^2]\}.
\end{equation*}
Then $2C_H\sqrt{\pi h}\le\delta_L$. Since $J\ge1$ and
$\Gamma(1+J/2)\ge1/2$, \eqref{core:Tnproperties} yields
\[
 |2^J\Upsilon_J|
 \le\frac{C_g(2C_H\sqrt{\pi h})^J}{\Gamma(1+J/2)}
 \le2C_g\delta_L\le\rho_L/4\le r_0/4.
\]
Together with $r_0\le C_g$, this proves \eqref{core:localbounds}.
\end{proof}

\subsection{Calibration and the spatial coin}\label{supp:spatialproof}

We now specify the constants in \eqref{core:spatial-envelope}. Put
\[
 C_{1e}=(8/7)^{d/2},\qquad C_{2e}=(4/3)^{d/2},
 \qquad\beta_0=4/7,\qquad\eta=1/4,
\]
and choose
\begin{equation}\label{core:envelope-calibration}
 \begin{gathered}
 b_{\mathrm{env}}=\max\{2,B_{\mathrm{loc}},C_p^*\},\qquad
 k_{\mathrm{env}}=\eta/(3C_{2e}),\\
 L_{\mathrm{env}}^2=\max\left\{1,
 \frac1{a_0}\log\frac{3b_{\mathrm{env}}^2}{\eta k_{\mathrm{env}}},
 \frac1{(1-\beta_0)a_0}\log\frac{6b_{\mathrm{env}}C_{1e}}\eta
 \right\},\qquad h_{\mathrm{env}}=h(L_{\mathrm{env}}).
 \end{gathered}
\end{equation}
Here the parametrix constants are calibrated with $T_{\mathrm{win}}=1$.
Then
\begin{equation}\label{core:midpoint-normalizer}
 S(\zeta)=b_{\mathrm{env}}^2
 +2b_{\mathrm{env}}k_{\mathrm{env}}C_{1e}
          e^{\beta_0a_0|\zeta|^2}
 +k_{\mathrm{env}}^2C_{2e}e^{a_0|\zeta|^2}.
\end{equation}
The tilted density in Algorithm~\ref{core:spatialalgorithm} is sampled
by choosing a Gaussian component with probability proportional to its
weight in the following array:
\[
\begin{array}{c|c|c}
 \text{Weight}&\text{Mean}&\text{Covariance}\\ \hline
 b_{\mathrm{env}}^2&0&\Id\\
 b_{\mathrm{env}}k_{\mathrm{env}}C_{1e}e^{\beta_0a_0|\zeta|^2}
 &\zeta/(7\sqrt\kappa)&(8/7)\Id\\
 b_{\mathrm{env}}k_{\mathrm{env}}C_{1e}e^{\beta_0a_0|\zeta|^2}
 &-\zeta/(7\sqrt\kappa)&(8/7)\Id\\
 k_{\mathrm{env}}^2C_{2e}e^{a_0|\zeta|^2}&0&(4/3)\Id.
\end{array}
\]
The weights sum to $S(\zeta)$. Thus this draw has bounded work.

\begin{lemma}[Spatial coin]\label{core:spatialguarantee}
With the choices in \eqref{core:envelope-calibration},
Algorithm~\ref{core:spatialalgorithm} is query-only, terminates almost
surely and satisfies
\[
 \Pp\{\operatorname{SpatialCoin}(e)=1\}=R(e)/C(\zeta),
 \qquad \E W_{\mathrm{sp}}(e)\le C_{\mathrm{sp}}<\infty,
\]where $C_{\mathrm{sp}}$ is a constant that depends only on $d,\Mc,\lambda,\Lambda$ and the primitive costs.
The bound is uniform over the coefficient class, time locations and
endpoints with $0<t-s\le h_{\mathrm{env}}$. It also holds
conditionally when the input edge is determined by previous history
and the call uses fresh randomness.
\end{lemma}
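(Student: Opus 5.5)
We verify the four claimed properties of SpatialCoin in turn: validity of every comparison probability, the mean identity by induction on recursion depth, almost-sure termination through a branching-process argument, and the expected-work bound from the same branching structure. Query-only is immediate. Step~2 evaluates $\widehat R_{\mathrm{loc}}$, which needs only $g$ and $H$, and these are built from coefficient values by \eqref{core:defect} and Lemma~\ref{core:randomterm}. The quantities $C$, $S$ and the four-component mixture are explicit functions of $\zeta$ and class constants.

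\textbf{Validity and the mean identity.} In Step~2 we need $0\le\widehat R_{\mathrm{loc}}\le C(\zeta)$ almost surely for $|\zeta|\le L_{\mathrm{env}}$. By \eqref{core:localbounds} with $L=L_{\mathrm{env}}$ and $h\le h_{\mathrm{env}}=h(L_{\mathrm{env}})$, we have $0\le\widehat R_{\mathrm{loc}}<B_{\mathrm{loc}}\le b_{\mathrm{env}}\le C(\zeta)$. So Step~2 returns a coin of mean $\E\widehat R_{\mathrm{loc}}/C(\zeta)=R(e)/C(\zeta)$ by Lemma~\ref{core:localpositive}. In Step~3 we need $S(\zeta)/C(\zeta)\le1$. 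I would check $S/C\le 1/4$ on $|\zeta|>L_{\mathrm{env}}$ directly from \eqref{core:midpoint-normalizer}, splitting $S$ into its three terms and using the three entries of the maximum defining $L_{\mathrm{env}}^2$:
\begin{itemize}
\item the first entry gives $b_{\mathrm{env}}^2\le(\eta/3)k_{\mathrm{env}}e^{a_0|\zeta|^2}$;
\item the third entry gives $2b_{\mathrm{env}}k_{\mathrm{env}}C_{1e}e^{\beta_0a_0|\zeta|^2}\le(\eta/3)k_{\mathrm{env}}e^{a_0|\zeta|^2}$;
\item the choice $k_{\mathrm{env}}=\eta/(3C_{2e})$ gives $k_{\mathrm{env}}^2C_{2e}e^{a_0|\zeta|^2}\le(\eta/3)k_{\mathrm{env}}e^{a_0|\zeta|^2}$.
\end{itemize}
Summing the three bounds yields $S\le\eta C=C/4$.

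Formula \eqref{core:midpoint-normalizer} itself is a Gaussian computation. Expand $C(\zeta_1)C(\zeta_2)$ and integrate against $\calN(0,\Id)$, using $|\zeta_1|^2+|\zeta_2|^2=|\zeta|^2+\kappa|\xi|^2$ and $a_0\kappa=1/8$. The same completion of squares identifies the four mixture components and their weights.

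For the mean on $|\zeta|>L_{\mathrm{env}}$, I would define $p_k(e)$ as the probability that the call returns one with recursion depth at most $k$, and prove by induction that $p_k(e)\uparrow R(e)/C(\zeta)$. Each new edge has duration $h/2\le h_{\mathrm{env}}$, so the recursive calls are admissible. Conditional on $Z$, the two subcalls are independent. If each subcall has mean $R(e_i)/C(\zeta_i)$, the return probability equals
\[
 \frac{S}{C}\int\widetilde f_\zeta(\xi)\frac{R(e_1)R(e_2)}{C(\zeta_1)C(\zeta_2)}\dd\xi=\frac{\E[R(e_1)R(e_2)]}{C(\zeta)}=\frac{R(e)}{C(\zeta)},
\]
by \eqref{core:midpointidentity}. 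Note that \eqref{core:midpointidentity} is Chapman--Kolmogorov, valid pointwise by Lemma~\ref{core:parametrix}, after rewriting $p(e_1)p(e_2)$ against the Gaussian midpoint density. To make the induction rigorous, I would pass to the limit $k\to\infty$ by monotone convergence once termination is established. An alternative that avoids the limit is to write the return probability as a sum over finite binary recursion trees.

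\textbf{Termination and work.} This is the main point to get right. The call tree is a Galton--Watson process in which each node has two children with probability at most $S/C\le1/4$ and no children otherwise, uniformly in the edge. Thus each node has at most $1/2$ expected children: the process is subcritical, finite almost surely, and has expected total size at most $\sum_k 2^{-k}=2$. Because the offspring bound holds conditionally on every history, I would formalise this by induction on generations, conditioning on the edges of each generation. The cost per node is bounded in expectation:
\begin{itemize}
\item Step~2 costs $O(\E J+1)=O(1)$ by Lemma~\ref{core:localpositive};
\item Steps~3--4 cost $O(1)$, consisting of a mixture draw, exponentials and a $d$-dimensional Gaussian.
\end{itemize}
By Wald-type summation over the tree, the total expected work is $\E W_{\mathrm{sp}}\le 2C_0=:C_{\mathrm{sp}}$. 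This bound does not depend on the edge, so the statement conditional on the history with fresh randomness follows by the same argument.
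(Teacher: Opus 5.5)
Your validity checks (including the three-term bound giving $S/C\le 1/4$), the subcritical branching bound on the expected number of calls and the per-call work accounting agree with the paper's proof. The gap is in the mean identity.

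Write $f(e)=R(e)/C(\zeta)$ and $\widetilde\E$ for expectation under $\widetilde f_\zeta$. Your displayed computation shows only that $f$ is \emph{a} solution of the recursion $\phi(e)=\frac{S(\zeta)}{C(\zeta)}\widetilde\E[\phi(e_1)\phi(e_2)]$ on nonlocal edges, with $\phi=f$ on local edges. Monotone convergence plus termination shows only that the true return probability $\lim_k p_k$ is \emph{also} a solution. Nothing you wrote identifies the two. The hypothesis ``each subcall has mean $R(e_i)/C(\zeta_i)$'' is exactly the circular assumption the paper explicitly avoids. The expansion over finite recursion trees has the same problem, because you would still need the remainder from iterating the fixed-point identity to vanish.

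A related omission is that you never verify $R(e)\le C(\zeta)$ on \emph{nonlocal} edges. Your Step~2 argument yields it only for $|\zeta|\le L_{\mathrm{env}}$. It is genuinely necessary: the procedure always outputs a coin, so the claim would be false wherever $R>C$. It follows from $R\le C_p^*\le b_{\mathrm{env}}\le C(\zeta)$, by Lemma~\ref{core:parametrix} and \eqref{core:envelope-calibration}; this is the first line of the paper's proof.

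With $0\le f\le 1$ in hand, your induction closes quantitatively. On local edges $p_k=f$ for every $k$. On nonlocal edges $p_0=0$ and $p_{k+1}(e)=\frac{S}{C}\widetilde\E[p_k(e_1)p_k(e_2)]$. Induction gives $p_k\le f$. The identity $f_1f_2-p_1p_2=f_1(f_2-p_2)+p_2(f_1-p_1)\le(f_1-p_1)+(f_2-p_2)$, together with $S/C\le 1/4$, gives $\sup_e(f-p_{k+1})\le\frac12\sup_e(f-p_k)$. Hence $0\le f-p_k\le 2^{-k}$.

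The paper reaches the same conclusion differently. It truncates at depth $l$ and replaces unresolved calls with auxiliary $f$-coins, which exist because $0\le f\le 1$. Backward induction gives mean $f$ for the truncated procedure. Coupling with the full procedure, the two differ only if depth $l$ is reached, an event of probability at most $2^{-l}$. In both versions, the subcriticality you establish for termination is also what makes the mean identity correct.
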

\begin{proof}
The choices of $b_{\mathrm{env}}$ and $h_{\mathrm{env}}$ imply
$0\le R(e)\le C(\zeta)$ and make the local uniform comparison valid.
Dividing the three terms of \eqref{core:midpoint-normalizer} by
$C(\zeta)\ge k_{\mathrm{env}}e^{a_0|\zeta|^2}$ bounds each by
$\eta/3$ whenever $|\zeta|>L_{\mathrm{env}}$. Thus every nonlocal
call branches with probability at most $1/4$.

Let $N_l$ be the number of calls at depth $l$, with $N_0=1$.
Conditional expectation gives
\[
 \E N_{l+1}\le\tfrac12\E N_l,\qquad
 \E N_l\le2^{-l},\qquad
 \E\sum_{l\ge0}N_l\le2.
\]
Consequently the recursion has finitely many calls almost surely.
Each call's own operations terminate almost surely and have uniformly
bounded conditional mean work: the local case costs $O(J+1)$ with
$\E J=2$, and the nonlocal case uses a bounded number of operations,
including the mixture draw. Charging a call only for its own work
and summing over depths therefore proves the uniform work bound.

For correctness, let $f(e)=R(e)/C(\zeta)$. The local output has mean
$f(e)$ by Lemma~\ref{core:localpositive}. On a nonlocal edge, write
$\widetilde\E$ for expectation under the tilted midpoint law. Therefore,
\[
 \frac{S(\zeta)}{C(\zeta)}\widetilde\E[f(e_1)f(e_2)]
 =\frac{\E[R(e_1)R(e_2)]}{C(\zeta)}=f(e).
\]
To avoid assuming correctness of the recursive outputs, truncate at
depth $l$ and replace unresolved calls by auxiliary coins of their
prescribed means $f$, independently conditional on their inputs.
Such coins are well defined because $0\le f\le1$. Backward induction
gives mean $f(e)$ for each truncated procedure. Couple it with the
full procedure using the same randomness before depth $l$. Their
outputs differ only if that depth is reached, with probability at
most $\Pp(N_l>0)\le2^{-l}$. Letting $l\to\infty$ proves the claim.
Fresh randomness gives the same argument conditionally on any
history determining the input edge.
\end{proof}

\subsection{Transition coins and short transitions}

\begin{lemma}[Transition coin]\label{core:shortcoin}
The transition coin algorithm is query-only, terminates
almost surely and satisfies \eqref{core:shortcoinlaw}. Its total
expected work is bounded by a class constant $C_{\mathrm{TC}}$,
uniformly over all edges of duration at most $h_{\mathrm{env}}$.
The same bound holds conditionally on the input edge.
\end{lemma}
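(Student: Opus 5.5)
The plan is to read Lemma~\ref{core:shortcoin} off Lemma~\ref{core:spatialguarantee}, adding one independent uniform comparison. Fix an edge $e=(s,u;t,v)$ with $0<h=t-s\le h_{\mathrm{env}}$ and set $\zeta=(v-u)/\sqrt h$. The routine $\TC(e)$ makes one call $B_{\mathrm{sp}}=\operatorname{SpatialCoin}(e)$ and draws an independent $U\sim\operatorname{Uniform}(0,1)$. It then returns $B_{\mathrm{sp}}\ind\{U\le C(\zeta)/(A_0e^{a_0|\zeta|^2})\}$.

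\emph{Validity and query-only access.} The comparison threshold is explicitly computable from the class constants and $\zeta$; it uses a bounded number of arithmetic operations and one exponential, and no coefficient queries. It lies in $[0,1]$. Indeed $C(\zeta)=b_{\mathrm{env}}+k_{\mathrm{env}}e^{a_0|\zeta|^2}$ and $e^{a_0|\zeta|^2}\ge1$, so
\[
C(\zeta)\le(b_{\mathrm{env}}+k_{\mathrm{env}})e^{a_0|\zeta|^2}=A_0e^{a_0|\zeta|^2},
\]
and positivity is clear. Hence the routine is query-only, because SpatialCoin is. It terminates almost surely, because SpatialCoin does and the extra step is finite.

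\emph{Law.} Since $U$ is independent of $B_{\mathrm{sp}}$, Lemma~\ref{core:spatialguarantee} gives
\[
\Pp\{\TC(e)=1\}=\frac{R(e)}{C(\zeta)}\cdot\frac{C(\zeta)}{A_0e^{a_0|\zeta|^2}}
=\frac{p(e)}{A_0G_{\kappa,h}(w)e^{a_0|w|^2/h}}=\frac{p(e)}{E_h(w)},
\]
using $R(e)=p(e)/G_{\kappa,h}(w)$ and $|\zeta|^2=|w|^2/h$. It remains to verify the identity $A_0G_{\kappa,h}(w)e^{a_0|w|^2/h}=K_{\mathrm{rej}}G_{4\kappa/3,h}(w)$. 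With $a_0=1/(8\kappa)$ the exponent is $-|w|^2/(2\kappa h)+|w|^2/(8\kappa h)=-3|w|^2/(8\kappa h)=-|w|^2/(2(4\kappa/3)h)$. The ratio of the normalizing constants is $(4/3)^{d/2}$, which matches $K_{\mathrm{rej}}=A_0(4/3)^{d/2}$. This yields \eqref{core:shortcoinlaw}.

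\emph{Work.} The work is the SpatialCoin work plus a bounded number of primitives: forming $h$ and $\zeta$, one uniform draw, one exponential, and one comparison. So $\E\Wk\le C_{\mathrm{sp}}+c_0=:C_{\mathrm{TC}}$, uniformly over edges of duration at most $h_{\mathrm{env}}$ and over the class. The conditional version follows from the conditional statement in Lemma~\ref{core:spatialguarantee}, since the call and $U$ use fresh randomness given the history that determines $e$.

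There is no genuine obstacle. The only points needing care are checking that the threshold is at most one and the Gaussian identity for $E_h$; the substantive difficulty was already settled in Lemma~\ref{core:spatialguarantee}.
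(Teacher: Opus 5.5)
Your proposal is correct and follows the paper's proof essentially step for step. Both arguments get validity of the extra comparison from $C(\zeta)\le A_0e^{a_0|\zeta|^2}$, the product law from independence and Lemma~\ref{core:spatialguarantee}, the Gaussian identity from $a_0=1/(8\kappa)$, and termination and work from the spatial coin plus bounded extra cost; your explicit exponent and $(4/3)^{d/2}$ normalizer check fills in the detail the paper leaves implicit.
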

\begin{proof}
Since $C(\zeta)\le A_0e^{a_0|\zeta|^2}$, the additional comparison
is valid. Independence and Lemma~\ref{core:spatialguarantee} imply
\[
 \Pp\{\TC(e)=1\}
 =\frac{R(e)}{C(\zeta)}
   \frac{C(\zeta)}{A_0e^{a_0|\zeta|^2}}
 =\frac{p(e)}{E_h(w)}.
\]
The Gaussian identity \eqref{core:gaussianenvelope} follows from
$a_0=1/(8\kappa)$. The additional operations have bounded cost,
so the work and termination assertions follow from the spatial coin.
\end{proof}

\begin{lemma}[Short transition]\label{core:shorttransition}
The short transition algorithm in
Section~\ref{core:shorttransitionalgorithm} is query-only, terminates
almost surely and returns a state of density $p(s,u;t,\cdot)$.
Its expected total work, including all rejected trials, is uniformly
bounded over the coefficient class, starting states and intervals
with $0<t-s\le h_{\mathrm{env}}$. This bound holds conditionally on
the starting state and times.
\end{lemma}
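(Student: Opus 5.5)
The plan is to view one trial of the short-transition algorithm as an i.i.d.\ trial record in the sense of Lemma~\ref{supp:fw:retry}, conditional on the history $\mathcal H$ that fixes $(s,u)$ and $t$. A trial draws $V\sim\calN(u,(4\kappa/3)h\Id)$ with $h=t-s\le h_{\mathrm{env}}$, then calls a fresh $\TC(s,u;t,V)$ and sets $S=\TC(s,u;t,V)$. I will check three things: the success probability, the law of the accepted state, and a bound on the work of a single trial. Lemma~\ref{supp:fw:retry} then gives almost sure termination, the law of the output, and the total work bound, all conditionally on $\mathcal H$.

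\textbf{Step 1 (accepted density).} Since the proposal density is $G_{4\kappa/3,h}(v-u)$, Lemma~\ref{core:shortcoin} gives $\Pp\{S=1\mid V=v\}=p(s,u;t,v)/E_h(v-u)$. Using \eqref{core:gaussianenvelope}, $E_h=K_{\mathrm{rej}}G_{4\kappa/3,h}$, so the joint subdensity of $(V,\{S=1\})$ is
\[
 G_{4\kappa/3,h}(v-u)\,\frac{p(s,u;t,v)}{K_{\mathrm{rej}}G_{4\kappa/3,h}(v-u)}=\frac{p(s,u;t,v)}{K_{\mathrm{rej}}}.
\]
Integrating over $v$, and using that $p(s,u;t,\cdot)$ is a probability density (Lemma~\ref{core:parametrix}), gives $q=1/K_{\mathrm{rej}}>0$. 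Conditional on success, the accepted state has density $p(s,u;t,\cdot)$. The quantity $K_{\mathrm{rej}}=A_0(4/3)^{d/2}$ is a class constant that does not depend on $(s,u,t)$ or on the coefficients.

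\textbf{Step 2 (work per trial).} One Gaussian proposal costs $O(d)$ primitive operations. The coin cost is bounded using the conditional form of Lemma~\ref{core:shortcoin}: for every input edge of duration at most $h_{\mathrm{env}}$, including arbitrary displacement $V-u$, the expected work of $\TC$ is at most $C_{\mathrm{TC}}$, given the input edge. Conditioning on $V$ and applying the tower property then gives $\E(C_k\mid\mathcal H)\le c:=O(d)+C_{\mathrm{TC}}$. This is the one point that needs care, because $V$ is random and unbounded. The argument works only because the bound in Lemma~\ref{core:shortcoin} is uniform in the endpoint, which it is since Lemma~\ref{core:spatialguarantee} holds uniformly over endpoints. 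Fresh randomness across trials makes the trial records i.i.d.\ given $\mathcal H$.

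\textbf{Step 3 (conclusion).} Lemma~\ref{supp:fw:retry} now yields almost sure termination and the required transition law for the returned state. It also bounds the expected total work, rejected trials included, by $c\,K_{\mathrm{rej}}$. This bound is uniform over the coefficient class, the starting states and the admissible intervals, and it holds conditionally on the starting state and times. The routine is query-only because it only accesses coefficients through $\TC$, which is itself query-only.
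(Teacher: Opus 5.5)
Your proposal is correct and follows the paper's proof essentially verbatim. Both arguments compute the joint proposal–acceptance density $p(s,u;t,v)/K_{\mathrm{rej}}$ and the resulting success probability $1/K_{\mathrm{rej}}$, use the endpoint-uniform conditional work bound of the transition coin, and apply Lemma~\ref{supp:fw:retry}; the paper just adds a constant $c_0$ for preliminary operations, giving the bound $c_0+K_{\mathrm{rej}}c_F$, which your version absorbs.
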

\begin{proof}
With $h=t-s$, the joint density of one proposal and acceptance is
\[
 G_{4\kappa/3,h}(v-u)\frac{p(s,u;t,v)}{E_h(v-u)}
 =\frac{p(s,u;t,v)}{K_{\mathrm{rej}}}.
\]
Each trial therefore succeeds with probability $1/K_{\mathrm{rej}}$,
and its conditional law given success is the required transition.
The Gaussian proposal and Lemma~\ref{core:shortcoin} give a uniform
bound $c_F$ on the mean work of one trial. Independent restarts and
Lemma~\ref{supp:fw:retry} bound the total expected work by
$c_0+K_{\mathrm{rej}}c_F$, where $c_0$ accounts for preliminary
operations.
\end{proof}

\section{Details of the bridge construction}\label{app:bridge-proofs}

We first quantify the cost of terminal rejection, which motivates the
bridge proposal in Section~\ref{poly:section}. We then prove
Theorem~\ref{bridge:general} using the supporting results below.
These give the local density coins, weighted-path sampler, parameter
choices, component acceptance tests and insertion bounds.

\subsection{The cost of terminal rejection}\label{br5:terminal-rejection}

We compute the acceptance probability and exhibit constant coefficients
for which terminal rejection has exponentially large expected work.
Fix $T>0$, endpoints $x,y$, and an integer
$M\ge\max\{2,\lceil T/h_{\mathrm{env}}\rceil\}$, and let
$\Delta=T/M$. In each trial, use the exact short-transition sampler
to generate an unconditional diffusion skeleton from $x$ at
$\Delta,\ldots,(M-1)\Delta$. Given its last state $u$, accept
the prefix with probability
\begin{equation}\label{br5:terminal-rejection-test}
 \frac{p(T-\Delta,u;T,y)}{U_\Delta},\qquad
 U_\Delta=A_0(2\pi\kappa\Delta)^{-d/2},
\end{equation}
restarting the complete prefix upon rejection. Let $\nu_{\mathrm{tr}}$
be the number of trials and $\Wk_{\mathrm{tr}}$ their total work.

\begin{proposition}[Cost of terminal rejection]
\label{br5:terminal-rejection-lower}
The preceding acceptance rule is implementable with a transition coin
and a known uniform comparison. The accepted prefix has the bridge
law on its grid, and $\E\nu_{\mathrm{tr}}=\frac{U_\Delta}{p(0,x;T,y)}.$
There are constants $c_1,c_2>0$, depending only on the coefficient
class and primitive costs, such that for every $T>0$, every $x$ and
every admissible $M$, this particular construction with $y=x$ satisfies
\begin{equation}\label{br5:terminal-rejection-work}
 \sup_{(b,\sigma)\in\mathcal F_T}\E\Wk_{\mathrm{tr}}
 \ge c_1e^{c_2T}.
\end{equation}
\end{proposition}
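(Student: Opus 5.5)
The plan has three parts, in order: implementability of the acceptance rule, exactness of the accepted prefix together with the expected trial count, and the exponential lower bound from a constant-coefficient example.

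\emph{Implementability.} Since $\Delta\le h_{\mathrm{env}}$, the transition coin applies. By \eqref{core:shortcoinlaw},
\[
\Pp\{\TC(T-\Delta,u;T,y)=1\}=\frac{p(T-\Delta,u;T,y)}{E_\Delta(y-u)}.
\]
Because $E_\Delta(w)=A_0G_{\kappa,\Delta}(w)e^{a_0|w|^2/\Delta}$, the ratio
\[
\frac{E_\Delta(y-u)}{U_\Delta}=\exp\Bigl\{-\frac{|y-u|^2}{2\kappa\Delta}+\frac{a_0|y-u|^2}{\Delta}\Bigr\}
\]
is explicit and at most one, since $a_0=1/(8\kappa)<1/(2\kappa)$. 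Multiplying the coin by an independent uniform comparison with this ratio therefore gives a coin of mean $p(T-\Delta,u;T,y)/U_\Delta$. In particular this acceptance probability lies in $[0,1]$.

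\emph{Exactness and $\E\nu_{\mathrm{tr}}$.} By Theorem~\ref{fw:upper}, or directly by successive use of Lemma~\ref{core:shorttransition}, the prefix has density $\prod_{i=1}^{M-1}p((i-1)\Delta,z_{i-1};i\Delta,z_i)$ with $z_0=x$. Multiplying by the acceptance probability gives the one-trial accepted subdensity
\[
\frac{\prod_{i=1}^{M}p((i-1)\Delta,z_{i-1};i\Delta,z_i)}{U_\Delta},\qquad z_M=y.
\]
By Chapman--Kolmogorov (pointwise, Lemma~\ref{core:parametrix}), its total mass is $q=p(0,x;T,y)/U_\Delta$. Conditioning on success yields exactly the bridge density \eqref{eq:bridgetarget} on the grid. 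Lemma~\ref{supp:fw:retry} then gives $\nu_{\mathrm{tr}}$ geometric with $\E\nu_{\mathrm{tr}}=1/q$.

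\emph{Lower bound.} Each trial costs at least a positive constant $c_w$; indeed it costs at least $M$ Gaussian draws, but one primitive per trial is enough. Hence $\E\Wk_{\mathrm{tr}}\ge c_w U_\Delta/p(0,x;T,x)$, and it remains to pick one admissible instance for which this ratio grows exponentially in $T$. The class is nonempty, so I take constant $\sigma$ with $a=\sigma\sigma^\top$ satisfying the ellipticity bounds. A nonzero constant drift $b$ with $|b_j|\le\Mc$ should also be allowed; I expect a small constant drift to be compatible with any constant $\sigma$ in the class, since constants have zero derivatives. If the class instead forces $b=0$, the Brownian case still gives at least $U_\Delta\gtrsim\Delta^{-d/2}\ge$ (something polynomial), and one would need to argue differently. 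I will assume a nonzero constant drift $b$ is admissible, as the remark after Theorem~\ref{main:rates} suggests (``admissible constant-coefficient diffusion''). Then $X_T-x\sim\calN(bT,aT)$ and
\[
p(0,x;T,x)=(2\pi T)^{-d/2}(\det a)^{-1/2}\exp\{-T\,b^\top a^{-1}b/2\}\le(2\pi\lambda T)^{-d/2}e^{-c_2'T}.
\]
On the other side, $U_\Delta=A_0(2\pi\kappa\Delta)^{-d/2}$ with $\Delta=T/M\le T$, so $U_\Delta\ge A_0(2\pi\kappa T)^{-d/2}$. Hence
\[
\frac{U_\Delta}{p(0,x;T,x)}\ge A_0(\lambda/\kappa)^{d/2}e^{c_2'T},
\]
uniformly in $M$ and $x$, which gives \eqref{br5:terminal-rejection-work} with $c_1=c_wA_0(\lambda/\kappa)^{d/2}$.

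The main obstacle is confirming that a nonzero constant drift belongs to $\mathcal F_T$ for the fixed class parameters. Since $\Mc>0$, any constant $b$ with components of size at most $\Mc$ is admissible alongside an admissible constant $\sigma$. If the given instance had nonconstant $\sigma$, I would replace it by the constant matrix $\sigma(0,0)$; this keeps the ellipticity bounds and has all derivatives zero, so it stays in the class.
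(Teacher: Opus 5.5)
Your proposal is correct and follows the paper's proof essentially step for step. It uses the same factorization $p/E_\Delta\cdot E_\Delta/U_\Delta$ with the explicit ratio $\exp\{-3|y-u|^2/(8\kappa\Delta)\}$, the same Chapman--Kolmogorov mass computation giving $\E\nu_{\mathrm{tr}}=U_\Delta/p(0,x;T,y)$, and the same constant-coefficient witness with a fixed positive cost per trial (the paper takes $b\equiv(\Mc/2)e_1$ and $\sigma\equiv\sigma_*(t_*,z_*)$).

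Your hedge about whether a nonzero constant drift is admissible is unnecessary: Assumption~\ref{ass:class} imposes only componentwise bounds by $\Mc$ and ellipticity, so such a drift is in the class. Using $\Delta\le T$ instead of the paper's $M\ge2$ only costs a harmless factor $2^{d/2}$ in $c_1$.
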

\begin{proof}
The Gaussian envelope implies that
\[
 p(T-\Delta,u;T,y)\le E_\Delta(y-u)
 =U_\Delta\exp\{-3|y-u|^2/(8\kappa\Delta)\}\le U_\Delta.
\]
Thus \eqref{br5:terminal-rejection-test} is implemented by a
transition coin and an independent comparison with the displayed
exponential. Multiplication by the unconditional prefix density gives the
unnormalized bridge density divided by $U_\Delta$. By the
Chapman--Kolmogorov identity, its mass is $p(0,x;T,y)/U_\Delta$.
Conditioning on acceptance gives the bridge law, and independent
restarts give the stated mean trial count.

Since the coefficient class is nonempty, choose an admissible
diffusion coefficient $\sigma_*$ and fix a point $(t_*,z_*)$ in its
domain. Set $\Sigma_0=\sigma_*(t_*,z_*)$. This constant matrix
inherits the componentwise and ellipticity bounds of $\sigma_*$.
Choose the constant coefficients
\[
 b(t,z)\equiv\mu:=\frac{\Mc}{2}e_1,\qquad
 \sigma(t,z)\equiv\Sigma_0,\qquad
 A:=\Sigma_0\Sigma_0^\top,
\]
where $e_1$ is the first coordinate vector. These coefficients
satisfy the original assumptions, with all positive-order
derivatives equal to zero. For equal endpoints, their transition
density is
\[
 p(0,x;T,x)=(2\pi T)^{-d/2}(\det A)^{-1/2}
             \exp\{-T\mu^\top A^{-1}\mu/2\}.
\]
Consequently,
\[
 \E\nu_{\mathrm{tr}}
 =A_0\sqrt{\det A}\,\kappa^{-d/2}M^{d/2}
            e^{T\mu^\top A^{-1}\mu/2}
 \ge A_0(2\lambda/\kappa)^{d/2}
            e^{\Mc^2T/(8\Lambda)},
\]
using $M\ge2$, $\det A\ge\lambda^d$ and
$A^{-1}\succeq\Lambda^{-1}\Id$.
Each trial incurs at least a fixed positive primitive cost
$c_{\min}$, so
$\Wk_{\mathrm{tr}}\ge c_{\min}\nu_{\mathrm{tr}}$ pathwise.
Taking expectations proves the claimed work lower bound.
\end{proof}

\subsection{Proof of the bridge sampling guarantee}
\label{br5:bridge-sampler-proof}

The proof first combines the component and insertion bounds for the
zero-endpoint construction, then verifies that the affine
transformation preserves both the coefficient class and the bridge law.

\begin{proof}[Proof of Theorem~\ref{bridge:general}]
Consider steps~2--6 on $[0,2n]$, with zero endpoints. Let $m'$ be
the number of requested times for this construction, including the
midpoint $n$. The number of retained cells and the maximum number
of good neighbors satisfy
\begin{equation}\label{br5:cell-counts}
 S_{\mathrm{cell}}=O(n^{d(3+4/\alpha)}),\qquad
 K_{\mathrm{nb}}=O(n^{d(1+4/\alpha)}),
\end{equation}
since $R\asymp n$, $N\asymp n^4$, $h\asymp n^{-3}$,
$L\asymp n^{-1}$ and $\ell\asymp n^{-2-4/\alpha}$.
Enumerating the integer offsets of length at most $L/\ell$ at each
cell and time layer constructs the cell graph and its known factors
in $O(NS_{\mathrm{cell}}K_{\mathrm{nb}})$ work. The augmented graph
therefore has
$E=O(NS_{\mathrm{cell}}K_{\mathrm{nb}}+N^2+dN)$ edges.

Lemma~\ref{br5:global-guarantee} combines the graph draw and the
selected component's proposal and acceptance test. It gives an exact
coarse bridge with expected work at most
\[
 C\{EN^3\log(4EN)+N^2+n+NS_{\mathrm{cell}}\}
 =O\{n^Q\log(2n)\}.
\]
Here $N^4S_{\mathrm{cell}}K_{\mathrm{nb}}=O(n^Q)$,
$N^5=O(n^{20})=O(n^Q)$ and $\log(4EN)=O(\log(2n))$.
The $n^{20}$ term is absorbed since $d\ge1$ and $0<\alpha\le1$.
Constructing the graph and storing and updating the sampled
states also require at most $O(n^Q\log(2+n))$ work.
Lemma~\ref{br5:insertion} preserves the bridge law and adds
$O(m'+N)$ expected work. Thus the total work $\Wk_0$ satisfies
\begin{equation}\label{bridge:calibrated-summary}
 \E\Wk_0\le C\{n^Q\log(2n)+m'\}.
\end{equation}
All implementations use only coefficient values and the stated
primitives.

For general inputs, use the fixed constant
\begin{equation}\label{br:scale-constant}
 L_{\mathrm{sc}}=\max\left\{1,\frac{2+\sqrt d}{4},
                              \frac{(\Mc+1)^2}{2\Mc^2}\right\}.
\end{equation}
Write $D_{\mathrm{end}}=|y-x|$ and $H=H_{T,x,y}$.
The inequalities $H\ge T$, $H\ge D_{\mathrm{end}}^2/T$ and
$H\ge2D_{\mathrm{end}}$ imply
\[
 r\le(2L_{\mathrm{sc}})^{-1},\qquad
 D_{\mathrm{end}}/S\le(4L_{\mathrm{sc}})^{-1},\qquad
 D_{\mathrm{end}}/\sqrt{TS}\le(2L_{\mathrm{sc}})^{-1/2}.
\]
For a scalar coefficient component $f$ and $p+|\beta|\le k$,
the chain rule gives
\begin{equation}\label{br:affine-derivatives}
 \left|\partial_s^p\partial_z^\beta
 f(rs,x+(s/S)w+\sqrt r\,z)\right|
 \le\Mc\left(r+\frac{\|w\|_1}{S}\right)^p r^{|\beta|/2}
 \le\Mc,
\end{equation}
because $r+\|w\|_1/S\le(2+\sqrt d)/(4L_{\mathrm{sc}})\le1$.
Positive-order derivatives of $\widetilde b$ have the additional
factor $\sqrt r\le1$, while
$|\widetilde b_i|\le(\Mc+1)/\sqrt{2L_{\mathrm{sc}}}\le\Mc$.
The transformed covariance is the original covariance evaluated
at the transformed point, so the ellipticity bounds are unchanged.
The one-sided derivative conventions at the time endpoints are
also preserved. Together with the coefficient and derivative
bounds established above, these facts show that the transformed
coefficients satisfy Assumption \ref{ass:class} on $[0,S]$.

Brownian scaling gives \eqref{br:affine-coefficients}. With
$\Psi_s(z)=x+(s/S)w+\sqrt r\,z$, the continuous densities satisfy
\[
 \widetilde p(s,z;q,\zeta)
 =r^{d/2}p(rs,\Psi_s(z);rq,\Psi_q(\zeta)),\qquad s<q.
\]
Change of variables yields this identity almost everywhere, and
continuity extends it to every endpoint. If $n$ was added to the
requested grid, first integrate out its state by the
Chapman--Kolmogorov identity. Substitution into the bridge density
for the remaining $m$ times gives the factor $r^{md/2}$, exactly
the Jacobian of \eqref{br:affine-output}. Hence the returned states
have the target bridge distribution.

Each transformed scalar query uses one original query and $O(d)$
arithmetic, with time arguments in $[0,T]$. Parameter and grid
processing cost $O(n+m)$, and $m'\le m+1$. Since
$L_{\mathrm{sc}}H\le n\le(L_{\mathrm{sc}}+1)H$ and $H,m\ge1$,
\eqref{bridge:calibrated-summary} gives
\[
 \E\Wk_{A_B}
 \le C_1\{n^Q\log(2n)+m+1+n\}
 \le C\{H^Q\log(2+H)+m\}.
\]
\end{proof}

\subsection{Local coins and relative density variation}
\label{br5:local-inputs-proof}

The graph construction needs two local facts: a coin for the ratio
$p(e)/(2g(e))$ with bounded expected work, and control of the density
as its endpoints move within their cells. We obtain both from the
following local estimates.
For a positive definite matrix $V$, write
\[
 G_{V,h}(z)=(2\pi h)^{-d/2}(\det V)^{-1/2}
                \exp\{-z^\top V^{-1}z/(2h)\}.
\]
We specify the estimator used by $\operatorname{SharpCoin}_n$ on
\eqref{br5:moderate}. The threshold $D_{\sharp}$ is fixed throughout.
Let $L_a=2d^{5/2}\Mc^2$, $\eta_{\mathrm{sh}}=1/8$ and
$r_{\mathrm{sh}}=\min\{1,\lambda\eta_{\mathrm{sh}}/(8L_a)\}$. Set
\[
 A=a(t,v),\qquad \eta=\eta_{\mathrm{sh}}/n,\qquad r_n=r_{\mathrm{sh}}/n,
 \qquad V_{\mathrm{in}}=(1+\eta)A,
 \qquad V_{\mathrm{out}}=2\kappa\Id.
\]
We split each parametrix correction according to whether all its
intermediate states remain in the ball of radius $r_n$ about $v$.
The covariance $V_{\mathrm{in}}$ provides tight bounds inside this
ball, while $V_{\mathrm{out}}$ controls its complement. Both integrals
are retained, using separate Gaussian references to estimate them.

For a correction of order $j\ge1$ and reference matrix $V$, draw the
gaps $\Delta_0,\ldots,\Delta_j$ as in \eqref{core:randomgaps}, and set
$q_0=s$ and $q_{i+1}=q_i+\Delta_i$ for $0\le i\le j$, so that
$q_{j+1}=t$. With $z_0=u$ and $z_{j+1}=v$, generate the intermediate
Gaussian bridge states successively: for $1\le i\le j$, draw $z_i$
conditionally on the gaps and $z_{i-1}$ from the Gaussian distribution
with mean and covariance
\[
 z_{i-1}+\frac{q_i-q_{i-1}}{t-q_{i-1}}(v-z_{i-1}),
 \qquad
 \frac{(q_i-q_{i-1})(t-q_i)}{t-q_{i-1}}V.
\]
We split the integral defining the order-$j$ parametrix correction
into two parts: one where all intermediate states lie within
distance $r_n$ of the terminal point $v$, and the other where at
least one intermediate state lies outside this ball. These two
parts correspond to the event $\mathcal I=\left\{\max_{1\le i\le j}|z_i-v|\le r_n\right\}$
and its complement $\mathcal I^c$, respectively.
For either event $\mathcal E$, define
\begin{equation}\label{br5:sharp-correction}
 U_j^{V,\mathcal E}
 =\mathsf Z_j(h)
 \frac{g(s,u;q_1,z_1)}{G_{V,\Delta_0}(z_1-z_0)}
 \prod_{i=1}^j
 \frac{\sqrt{\Delta_i}H(q_i,z_i;q_{i+1},z_{i+1})}
      {G_{V,\Delta_i}(z_{i+1}-z_i)}\,
 \ind_{\mathcal E}.
\end{equation}
Test $\mathcal E$ before evaluating the product; if it fails, set
this correction to zero. The factors $g,H$ and the recurrence for
$\mathsf Z_j$ are those of the unconditional sampling construction.

Next, independently draw $J_1,J_2$ with
$\Pp(J_i=j)=2^{-j}$, $j\ge1$, by counted fair-coin loops.
Using independent gap and bridge draws for the two parts, generate
\[
 U_{J_1}^{\mathrm{in}}=U_{J_1}^{V_{\mathrm{in}},\mathcal I},
 \qquad
 U_{J_2}^{\mathrm{out}}=U_{J_2}^{V_{\mathrm{out}},\mathcal I^c},
\]
where each event is evaluated on its own sampled bridge. Set
\begin{equation}\label{br5:sharp-estimator}
 Y_n(e)=1+
 2^{J_1}U_{J_1}^{\mathrm{in}}
       \frac{G_{V_{\mathrm{in}},h}(v-u)}{g(e)}
 +2^{J_2}U_{J_2}^{\mathrm{out}}
       \frac{G_{V_{\mathrm{out}},h}(v-u)}{g(e)}.
\end{equation}
The factors $2^{J_1}$ and $2^{J_2}$ compensate for the probabilities
of selecting the correction orders.

To generate $\operatorname{SharpCoin}_n(e)$, construct $Y_n(e)$ as
above, draw an independent $U\sim\operatorname{Uniform}(0,1)$, and
return $\ind\{U\le Y_n(e)/2\}$.
The following lemma verifies the bounds and unbiasedness needed for
this comparison and gives a relative density estimate for the later
spatial discretization.

\begin{lemma}[Local inputs]\label{br5:local-inputs}
For each fixed $D_{\sharp}>0$, there are computable constants $h_0,c_0>0$,
$C_{\mathrm{Hol}}<\infty$ and a positive dyadic exponent
$\alpha\in(0,1]$, depending only on $D_{\sharp}$ and the coefficient bounds,
with the following properties. For every integer $n\ge1$ and every
edge satisfying \eqref{br5:moderate},
\[
 \E Y_n(e)=\frac{p(e)}{g(e)},\qquad
 \frac12\le Y_n(e)\le\frac32\quad\hbox{almost surely}.
\]
Consequently, $\operatorname{SharpCoin}_n(e)$ has mean
$p(e)/(2g(e))\in[1/4,3/4]$ and uniformly bounded expected work.
With $\rho=c_0\sqrt h/(1+\sqrt n)$ and
$|u'-u|,|v'-v|\le\rho/4$, we have
\begin{equation}\label{br5:relative-holder}
 \left|\frac{p(s,u';t,v')}{p(s,u;t,v)}-1\right|
 \le C_{\mathrm{Hol}}
 \left\{\left(\frac{|u'-u|}{\rho}\right)^\alpha
       +\left(\frac{|v'-v|}{\rho}\right)^\alpha\right\}.
\end{equation}
The exponent $\alpha$ may be fixed from the coefficient bounds alone,
independently of $D_{\sharp}$.
\end{lemma}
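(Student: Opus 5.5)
The argument has three parts: unbiasedness of $Y_n(e)$, the almost-sure bounds $\tfrac12\le Y_n(e)\le\tfrac32$, and the relative H\"older estimate \eqref{br5:relative-holder}.

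\emph{Unbiasedness.} We repeat the cancellation argument of Lemma~\ref{core:randomterm} with a general reference covariance $V$. Conditional on the order $j$, the gaps have Dirichlet density $\mathsf Z_j(h)^{-1}\prod\Delta_i^{-1/2}$. The bridge states have density $\prod_i G_{V,\Delta_i}(z_{i+1}-z_i)/G_{V,h}(v-u)$. Multiplying by \eqref{br5:sharp-correction} gives
\[
 \E U_j^{V,\mathcal E}=\frac{(g\otimes H^{\otimes j})|_{\mathcal E}(e)}{G_{V,h}(v-u)},
\]
where the restriction means the convolution integral is taken over intermediate states in $\mathcal E$. The $\mathcal I$ and $\mathcal I^c$ pieces sum to the full term. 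Summing over $j$ with the $2^{J_i}$ weights, which absolute convergence from Lemma~\ref{core:parametrix} justifies, yields
\[
 \E Y_n(e)=1+\sum_{j\ge1}\frac{K_j(e)}{g(e)}=\frac{p(e)}{g(e)}.
\]
The expected work is $O(\E J_1+\E J_2)=O(1)$.

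\emph{Almost-sure bounds.} It suffices to show each correction term is at most $1/4$ in absolute value, pathwise, for every $j$.

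For the outside part, each factor $g/G_{V_{\mathrm{out}}}$ and $\sqrt{\Delta_i}H/G_{V_{\mathrm{out}}}$ is bounded by a constant times a Gaussian ratio, since $V_{\mathrm{out}}=2\kappa\Id$ dominates $\kappa$. This gives $|U_j^{\mathrm{out}}|\le C^j\mathsf Z_j(h)$. Multiplying by the prefactor $G_{V_{\mathrm{out}},h}(w)/g(e)$ is bounded by $C e^{c|w|^2/h}\le Ce^{cD_\sharp n}$ on \eqref{br5:moderate}, which is dangerous. This is where the indicator of $\mathcal I^c$ is used: some $z_i$ is at distance greater than $r_n$ from $v$. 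The Gaussian factors of the product are then compared with the sharper reference, and the excess of $V_{\mathrm{out}}$ over the true covariance, combined with $h\le h_0n^{-3}$ and $r_n^2/h\gtrsim n$, produces an extra factor $e^{-c' r_n^2/h}\le e^{-c''n/h_0}$. Choosing $h_0$ small then beats $e^{cD_\sharp n}$.

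I expect this step to be the main obstacle. The pathwise bound is delicate because the bridge is sampled under $V_{\mathrm{out}}$, so the integrand ratio must be bounded pointwise rather than only in expectation. My first attempt would bound each $H$ factor using $|H|\le C\Delta^{-1/2}G_{\kappa',\Delta}$ with $\kappa'$ strictly between $\Lambda$ and $2\kappa$. I would then use the inequality $\sum_i|z_{i+1}-z_i|^2/\Delta_i\ge |z_i-v|^2/(t-q_i)$ to extract the decay.

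For the inside part, all $z_i$ lie in $B(v,r_n)$. On this ball, Lipschitz continuity gives $\|a(q,z)-A\|\le L_a(h+r_n)$, so each frozen covariance $a(q_i,z_i)$ is within relative $\eta_{\mathrm{sh}}/(8n)$ of $A$. Hence $g(\cdot)\le (1+O(1/n))^{d/2}G_{V_{\mathrm{in}},\Delta}$ with exponent gain. The same holds for $H$, which additionally carries the small factor $L_a r_n+B_b$ times polynomial terms. Together with $G_{V_{\mathrm{in}},h}(w)/g(e)\le (1+\eta)^{-d/2}e^{O(\eta)|w|^2/h}\le e^{O(\eta_{\mathrm{sh}}D_\sharp)}$, we get $|2^{J}U^{\mathrm{in}}\cdots|\le C(C\sqrt h)^J$, which is at most $1/4$ for $h_0$ small. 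Hence $p(e)/(2g(e))\in[1/4,3/4]$.

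\emph{H\"older estimate.} I would differentiate, or take differences of, the parametrix series term by term. Each kernel $K_j$ is H\"older in its endpoints relative to itself at scale $\sqrt h/(1+\sqrt n)$: the Gaussian exponent changes by $O(|w|\,|v'-v|/h)\le O(\sqrt{n}\,\rho/\sqrt h)$, which is small, and the coefficient H\"older continuity transfers with some dyadic exponent $\alpha$ (for example $\alpha=1/2$) coming from the singular integral in $H$. Dividing by $p(e)\ge g(e)/2$, which follows from the first part when $D_\sharp$ is enlarged to cover the displaced edge, gives \eqref{br5:relative-holder}. The exponent $\alpha$ depends only on the regularity used in the $H$-factor estimates, not on $D_\sharp$.
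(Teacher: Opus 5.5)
Your unbiasedness argument and the almost-sure bounds follow the paper's proof. You use the same cancellation against the Dirichlet gap law and the Gaussian bridge. On $\mathcal I^c$ you use the same mechanism: bound $g$ and $H$ by $G_{\kappa,\cdot}$, compare with the $G_{2\kappa,\cdot}$ reference, and note that a node outside $B(v,r_n)$ forces $\sum_i|z_{i+1}-z_i|^2/\Delta_i\gtrsim r_n^2/h\ge r_{\mathrm{sh}}^2n/h_0$, which beats the prefactor $e^{D_\sharp n/(2\lambda)}$.

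One correction on $\mathcal I$. Absorbing the polynomial factors of $|z_{i+1}-z_i|/\sqrt{\Delta_i}$ in $H$ into $G_{V_{\mathrm{in}},\Delta_i}$ can only use the covariance gap of relative size $\eta\asymp1/n$. Each $H$ factor therefore costs a power of $n$; the paper gets $C_2\eta^{-3/2}\Delta_i^{-1/2}$. Your bound $C(C\sqrt h)^J$ with $n$-free constants is not justified. The correct bound is $C(Cn^{3/2}\sqrt h)^J$, and it is $h\le h_0n^{-3}$ that makes it small.

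The genuine gap is the relative H\"older estimate \eqref{br5:relative-holder}.

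\emph{Why your route fails as stated.} Differencing the parametrix series term by term, with the usual singular-integral estimates, bounds $|p(e')-p(e)|$ by a multiple of a broad Gaussian such as $G_{\kappa,h}(v-u)$. On a moderate edge with $|w|^2/h\asymp D_\sharp n$, however, $G_{\kappa,h}(w)/g(e)\ge(\lambda/(2\Lambda))^{d/2}e^{|w|^2/(4\Lambda h)}$, which is exponentially large in $n$. Dividing by $p(e)\ge g(e)/2$ therefore cannot give a constant $C_{\mathrm{Hol}}$ uniform in $n$.

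Your estimate that the Gaussian exponent changes by $O(\sqrt n\rho/\sqrt h)$ concerns only the leading term $g$. In $K_j$, $j\ge1$, the terminal point enters only through $H(q_j,z_j;t,v)$, whose gap $t-q_j$ may be arbitrarily small, so there the variation is not small. Also, ``H\"older relative to itself'' is not meaningful for these signed kernels. What is needed is $|K_j(e')-K_j(e)|\le C_j\,g(e)(|v'-v|/\rho)^\alpha$ with summable $C_j$.

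\emph{What it would take.} Along your route you would have to redo the $\mathcal I/\mathcal I^c$ analysis for differences. Inside, use sharp $V_{\mathrm{in}}$ references and check the $\eta$-costs against $h\le h_0n^{-3}$ and $\rho$. On $\mathcal I^c$ you would also need genuine differencing, since absolute bounds there do not vanish as $v'\to v$.

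\emph{The paper's route.} The paper avoids this by parabolic regularity. The density $p$ solves the forward equation in $(t,v)$ and the backward equation in $(s,u)$. A Caccioppoli/Campanato interior estimate then bounds the oscillation by $CM(|z-z'|/R)^{1/2}$ on cylinders of radius $R\asymp\rho$, where $M$ is the supremum of $p$ on the cylinder. The sharp comparison $\tfrac12g\le p\le\tfrac32g$, applied with an enlarged threshold $D_*$, together with $|\log(g(e')/g(e))|\le C$ on these cylinders, gives $M\le Cp(e)$. This local comparability of $p$ with $p(e)$ on scale $\rho$ is the missing idea that makes the H\"older bound relative.
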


\begin{proof}[Proof of Lemma~\ref{br5:local-inputs}]
We first bound the estimator in \eqref{br5:sharp-estimator}.
Decrease $h_0$ so that
\[
 h_0\le\min\{1,h_{\mathrm{env}},r_{\mathrm{sh}},r_{\mathrm{sh}}^2/[16(1+D_{\sharp})]\}.
\]
The moderate-displacement condition implies $|u-v|\le r_n/4$.
The known Lipschitz bound
$\|a(q,z)-a(t,v)\|_F\le L_a(|t-q|+|z-v|)$ and the choices of
$r_{\mathrm{sh}},\eta_{\mathrm{sh}}$ then imply
\begin{equation*}
 (1-\eta/4)A\preceq a(q,z)\preceq(1+\eta/4)A,
 \qquad q\in[s,t],\quad |z-v|\le r_n.
\end{equation*}

For subedges of duration $\delta$ with both endpoints in this ball,
the explicit formulas in parametrix expansion imply that
\begin{equation*}
 g\le C_1G_{V_{\mathrm{in}},\delta},\qquad
 |H|\le C_2\eta^{-3/2}\delta^{-1/2}
              G_{V_{\mathrm{in}},\delta},
\end{equation*}
where $C_1,C_2$ are computable class constants. 
For the whole input edge,
\begin{equation*}
 \frac{G_{V_{\mathrm{in}},h}(v-u)}{g(e)}
 \le\exp\{\eta D_{\sharp}n/(2\lambda)\}
 \le C_D:=\exp\{\eta_{\mathrm{sh}}D_{\sharp}/(2\lambda)\}.
\end{equation*}

Write $K_j=g\otimes H^{\otimes j}$ and split its integral into
$K_j^{\mathrm{in}}$ and $K_j^{\mathrm{out}}$ according to
$\mathcal I$ and $\mathcal I^c$. Cancellation of the gap and Gaussian
bridge densities, exactly as in \eqref{core:Tnproperties}, gives
\[
 \E U_j^{\mathrm{in}}
   =\frac{K_j^{\mathrm{in}}}{G_{V_{\mathrm{in}},h}(v-u)},
 \qquad
 \E U_j^{\mathrm{out}}
   =\frac{K_j^{\mathrm{out}}}{G_{V_{\mathrm{out}},h}(v-u)}.
\]
Moreover,
\[
 |U_j^{\mathrm{in}}|
 \le C_1(C_2\eta^{-3/2})^j\mathsf Z_j(h).
\]
For an outside path, some node $z_l$ satisfies $|z_l-v|>r_n$. Hence
\[
 \sum_{i=0}^j\frac{|z_{i+1}-z_i|^2}{\Delta_i}
 \ge\frac{(|z_l-u|+|v-z_l|)^2}{h}
 \ge\frac{(7r_n/4)^2}{h}.
\]
Combining this with Lemma~\ref{core:parametrix} and
$G_{\kappa,\delta}/G_{2\kappa,\delta}
=2^{d/2}\exp\{-|z|^2/(4\kappa\delta)\}$ yields
\begin{equation*}
 |U_j^{\mathrm{out}}|
 \le C_g2^{d/2}(2^{d/2}C_H)^j\mathsf Z_j(h)
          \exp\{-c_{\mathrm{out}}r_n^2/h\},
 \qquad c_{\mathrm{out}}:=49/(64\kappa).
\end{equation*}

The absolute convolution bounds from the parametrix expansion justify
summing these expectations, so \eqref{br5:sharp-estimator} is unbiased
for $p(e)/g(e)$.
Choose $h_0$ small enough that
$2C_2\eta_{\mathrm{sh}}^{-3/2}\sqrt{\pi h_0}\le1/2$ and
$2^{1+d/2}C_H\sqrt{\pi h_0}\le1/2$.
Since $\Gamma(1+j/2)\ge1/2$, then for every $j\ge1$ the absolute inside
correction in \eqref{br5:sharp-estimator} is at most
$4C_1C_D C_2\eta_{\mathrm{sh}}^{-3/2}\sqrt{\pi h_0}$,
and the outside correction has absolute value at most
\[
 2^{d+2}C_gC_H\sqrt{\pi h}
 \exp\left\{\left(\frac{D_{\sharp}}{2\lambda}
                   -\frac{c_{\mathrm{out}}r_{\mathrm{sh}}^2}{h_0}\right)n\right\}.
\]
Decreasing $h_0$ further makes both bounds at most $1/4$, uniformly
for $n\ge1$. 
Thus $1/2\le Y_n(e)\le3/2$ and the uniform comparison has the
claimed mean. Gap generation, Gaussian bridge simulation, ball tests,
coefficient queries and evaluation of the normalizer recurrence cost
$O(j+1)$ at order $j$. Since $\E J_1=\E J_2=2$, the total expected
work is uniformly bounded. 

We prove \eqref{br5:relative-holder} with the dyadic choice
$\alpha=1/2$. In terminal variables,
\begin{equation*}
 \partial_t p=\operatorname{div}
 \left[\tfrac12a\nabla p+
       \{\tfrac12\operatorname{div}a-b\}p\right],
\end{equation*}
while the reversed backward operator is
$\operatorname{div}(\tfrac12a\nabla)
+(b-\tfrac12\operatorname{div}a)\cdot\nabla$.
The principal matrix is Lipschitz and the lower-order coefficients
are bounded, with known class bounds.

These bounds give an effective local spatial $1/2$-H\"older estimate.
Indeed, write $Q_r(q,z)=(q-r^2,q]\times B_r(z)$.
For a solution $f$ bounded by $M$ on $Q_{4R}$, $R\le1$, define
$\mathcal E(r)=\inf_c\int_{Q_r}|f-c|^2$ for centers in $Q_R$.
Then by Caccioppoli's inequality on $Q_{2r}$ and the comparison with the
constant-coefficient heat equation on $Q_r$, we have
\[
 \mathcal E(\theta r)
 \le C_1\theta^{d+4}\mathcal E(r)+C_2M^2r^{d+4},
 \qquad r\le R,\quad 0<\theta\le\tfrac14,
\]
uniformly over these centers. The constants are computable from
the energy inequalities and derivatives of the explicit Gaussian
kernel. Choose a dyadic $\theta$ with $C_1\theta\le1/2$.
Iteration gives $\mathcal E(r)\le CM^2r^{d+3}/R$; telescoping
cylinder averages then yields
$|f(q,z)-f(q,z')|\le CM(|z-z'|/R)^{1/2}$
on the smaller cylinder, including its upper time face.

Now take $c_0=1/8$ and $\rho=c_0\sqrt h/(1+\sqrt n)$.
Use endpoint cylinders of radius $4\rho$ and depth $16\rho^2$,
ending at $t$ and at $s$ in reversed time, respectively.
Their transition durations lie in $[h/2,h]$.
Decrease $h_0$ so that the preceding sharp comparison also holds
with displacement threshold $D_*=4(\sqrt{D_{\sharp}}+5)^2$.
For every edge $e'$ in these cylinders, including a displacement
of the opposite endpoint by at most $\rho/4$, the Gaussian formula
and the Lipschitz bound on $a$ give
\[
 \left|\log\frac{g(e')}{g(e)}\right|
 \le C\left\{\sqrt{n/h}\,\rho+n\rho^2/h+n(\rho+\rho^2)\right\}
 \le C.
\]
Thus $\tfrac12g\le p\le\tfrac32g$ bounds the density on both
cylinders by $Cp(e)$. Applying the local estimate successively
to the two endpoints and dividing by $p(e)>0$ proves
\eqref{br5:relative-holder}. All constants are computable from
the stated bounds. 

For $k=1$, the same conclusion follows by smooth approximation.
Extend $a$ and $b$ beyond the time interval while preserving
uniform ellipticity and controlled first-order bounds, and
mollify the extensions to obtain smooth coefficients
$a_\varepsilon,b_\varepsilon$. These approximations converge
locally uniformly to $a,b$, and the estimates above hold with
constants independent of $\varepsilon$. The corresponding
parametrix kernels converge term by term and satisfy uniform
Gaussian bounds, which provide summable, integrable majorants.
Dominated convergence therefore yields convergence of the
transition densities $p_\varepsilon$ to $p$ for $s<t$.
Passing to the limit in the density comparisons and the relative
Hölder estimate proves the result for the original coefficients.
\end{proof}

\subsection{Implementation of the weighted-path procedure}
\label{br5:weighted-proof}

We now supply the coin transformation and sample sizes used by the
weighted-path procedure in Section~\ref{br5:weighted}, and prove its
exactness and work bound. The same result applies to the cell graph
and to the augmented graph used for component selection. Throughout,
$L_0=2\log16$, $\lambda_0=2NL_0$ and
$c_{\mathfrak e}=1+\log(w_{\mathfrak e})/L_0$.

We use Huber's linear Bernoulli factory
\citep[Theorem~1 and Section~2.2]{Huber2016}. Given coins of mean
$q$, known $a>1$ and $\epsilon\in(0,1)$ with $aq\le1-\epsilon$,
write $\operatorname{LF}(a,\epsilon;q)$ for its output coin of
mean $aq$. When $a$ stays in a fixed compact subset of $(1,\infty)$
and an input coin has uniformly bounded conditional expected cost
$c$, its expected scalar work is $O((c+1)/\epsilon)$.

To construct the $c_{\mathfrak e}$-coins in Section~\ref{br5:weighted}, let
$\rho_0=7/8$ and $C_0=\log8$. Repeatedly propose an integer $J\ge1$
with geometric mass $(1-\rho_0)\rho_0^{j-1}$ and accept with
probability $1/J$. Given the accepted $J$, let $D_{\mathfrak e}$ be the product
of $J$ independent $\operatorname{LF}(8/7,1/7;1-w_{\mathfrak e})$ coins.
With probability $C_0/L_0$ return $1-D_{\mathfrak e}$; otherwise return one.
Complementing an edge coin supplies the input mean $1-w_{\mathfrak e}$.
Geometric proposals are generated by successive uniform comparisons.

The empirical sample sizes used in the main-text procedure are
\[
 J_0=\left\lceil\tfrac12N^2L_0^2
                  \{\lambda_0+\log(2E)\}\right\rceil,\qquad
 B_0=\left\lceil\tfrac12\lambda_0^2\log(4E\lambda_0)\right\rceil.
\]
Recall that for each edge, $\widehat c_{\mathfrak e}$ is the mean of $J_0$ independent
$c_{\mathfrak e}$-coins. These initial estimates remain fixed through the internal
retries of one graph draw. For every Poisson factor,
$\overline c_{\mathfrak e}$ is the mean of $B_0$ fresh such coins at each edge. These fresh estimates are
independent across edges and factors, conditional on the initial
estimates. For the analysis, we define
\[
 \widehat v_\gamma=\tfrac12+\frac1{2N}\sum_{\mathfrak e\in\gamma}d_{\mathfrak e},
 \qquad \widehat v_*=\tfrac12+\frac{M(o)}{2N}.
\]
Both values lie in $[0,1]$ and the maximum is at least the candidate
value, so \eqref{br5:graph-factor} is a valid probability. Then we have the following guarantee.

\begin{lemma}[Sampling a weighted graph path]\label{br5:dag}
For the graph of Section~\ref{br5:weighted}, suppose that the edge
coins have uniformly bounded conditional expected work. The specified
procedure terminates almost surely and samples a path with probability
proportional to $\prod_{\mathfrak e\in\gamma}k_{\mathfrak e}w_{\mathfrak e}$. Its expected scalar work is
$O\{EN^3\log(4EN)\}$, uniformly in the positive factors and the total
path weight.
\end{lemma}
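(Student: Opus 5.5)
The proof has three parts: the $c_{\mathfrak e}$-coins, exactness of the Poisson acceptance test, and the work bound.

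\textbf{The $c_{\mathfrak e}$-coins.} Write $x=1-w_{\mathfrak e}\in[0,3/4]$. Then $\mathrm{LF}(8/7,1/7;x)$ is legitimate because $(8/7)x\le 6/7=1-1/7$. The accepted $J$ has law proportional to $(1-\rho_0)\rho_0^{j-1}/j$. Since $\sum_j\rho_0^{j}/j=\log 8$, this law is $\rho_0^j/(jC_0)$, and the acceptance probability per proposal is $(1-\rho_0)C_0/\rho_0$, bounded below. Hence
\[
\E D_{\mathfrak e}=\sum_j\frac{\rho_0^j(8x/7)^j}{jC_0}=\frac{-\log(1-x)}{C_0}=\frac{-\log w_{\mathfrak e}}{C_0}\in[0,\log 4/\log 8].
\]
Returning $1-D_{\mathfrak e}$ with probability $C_0/L_0$ then gives mean $1-(C_0/L_0)\E D_{\mathfrak e}=c_{\mathfrak e}$. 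For the cost, $\E J<\infty$, and each factory call costs $O((c+1)\cdot 7)$ by Huber's bound, so one $c_{\mathfrak e}$-coin has $O(1)$ expected work.

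\textbf{Exactness.} Condition on the initial estimates $\widehat c$. The proposal probability is $\prod a_{\mathfrak e}/D(o)$ by the backward recursion. One factor coin has success probability $p_\gamma=\E U_\gamma=\E\widehat v_\gamma+1/2-\E\widehat v_*$, where $\E\widehat v_\gamma=\tfrac12+\tfrac1{2N}\sum_{\gamma}(c_{\mathfrak e}-\widehat c_{\mathfrak e})$ by linearity. The key point is that $\E\widehat v_*=:\mu_*$ is a single number given $\widehat c$, the same for every candidate path, because the fresh estimates are drawn at all edges independently of $\gamma$. Thus $p_\gamma=1-\delta+s_\gamma/(2N)$, with $s_\gamma=\sum_\gamma(c-\widehat c)$ and $\delta=\mu_*-\tfrac12$. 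The Poisson product succeeds with probability $e^{-\lambda_0(1-p_\gamma)}=e^{-\lambda_0\delta}e^{L_0 s_\gamma}$, since $\lambda_0/(2N)=L_0$. This is proportional to $\prod k w/\prod a$ with a common factor, so the weighted proposal times acceptance is proportional to $\prod k_{\mathfrak e}w_{\mathfrak e}$. Lemma~\ref{supp:fw:retry}, applied conditionally on $\widehat c$, gives almost-sure termination and the exact law.

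\textbf{Work bound.} This is the main obstacle: the success probability per retry must be bounded below uniformly. Per retry it equals
\[
\sum_\gamma\frac{\prod a}{D(o)}\,e^{-\lambda_0\delta}e^{L_0s_\gamma}=\frac{e^{-\lambda_0\delta}\sum_\gamma\prod kw}{\sum_\gamma\prod a}.
\]
Use Hoeffding with $J_0$ samples and a union bound over the $E$ edges. With probability at least $1-e^{-\lambda_0}/2$, every edge satisfies $|\widehat c_{\mathfrak e}-c_{\mathfrak e}|\le 1/(NL_0)$, so that for all paths $|L_0s_\gamma|\le1$ and hence $\sum\prod a\le e\sum\prod kw$. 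For $\delta$, the choice of $B_0$ makes all $|d_{\mathfrak e}-(c_{\mathfrak e}-\widehat c_{\mathfrak e})|\le 1/\lambda_0$ except with probability $1/(2\lambda_0)$ or so. On that event $M(o)\le\max_\gamma s_\gamma+N/\lambda_0$, giving $\lambda_0\delta=O(1)$ on the good initial event (using $\max s_\gamma\le 1/L_0$ there and $|\delta|\le1$ otherwise). So per-retry success is bounded below by a constant on the good event.

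On the bad event, of probability at most $e^{-\lambda_0}$, the success probability is still at least $e^{-2\lambda_0}e^{-2L_0N}\cdot(\text{ratio}\ge e^{-2L_0N})$, crudely $\ge e^{-c\lambda_0}$. To make this compensate, I would instead quantify Hoeffding at level $\exp(-C\lambda_0)$ with the stated $J_0$, adjusting constants. Alternatively, since fresh initial estimates are only redrawn per outer call, I would bound the conditional expected retries by $\E[1/\text{success}]$ and split on the tail of $\max_{\mathfrak e}|\widehat c_{\mathfrak e}-c_{\mathfrak e}|$ via sub-Gaussian layering.

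\textbf{Cost accounting.} Each retry costs $O(E)$ for the proposal, after $D$ is computed once in $O(E)$. It also costs $\E K\cdot(EB_0+E)=O(N\cdot E\,N^2\log(EN))$ for the factors, while initialization costs $EJ_0=O(EN^3\log E)$. Combined with $O(1)$ expected retries, this gives $O\{EN^3\log(4EN)\}$.
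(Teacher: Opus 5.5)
Your $c_{\mathfrak e}$-coin construction and the exactness argument are correct and follow the paper. There is one slip: $\E U_\gamma=1+\E\widehat v_\gamma-\E\widehat v_*$, not $\E\widehat v_\gamma+\tfrac12-\E\widehat v_*$, but your next formula $p_\gamma=1-\delta+s_\gamma/(2N)$ is right.

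The genuine gap is the one you flag in the work bound. Conditional on the initial estimates, the number of retries is geometric with mean $1/a_{\mathrm{init}}$, so you need $\E(a_{\mathrm{init}}^{-1})=O(1)$. On $\mathcal G^c$ you only assert $a_{\mathrm{init}}\ge e^{-c\lambda_0}$ with some $c>1$. Against $\Pp(\mathcal G^c)\le e^{-\lambda_0}$ this gives only $\E(a_{\mathrm{init}}^{-1})\le C+e^{(c-1)\lambda_0}$, which is exponential in $N$. (Two-sided Hoeffding plus the union bound gives $e^{-\lambda_0}$ with the stated $J_0$, not $e^{-\lambda_0}/2$.) Your first repair fails as stated. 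With the given $J_0$ and threshold $1/(NL_0)$, the tail is of order $e^{-\lambda_0}$, and not smaller. Reaching $e^{-C\lambda_0}$ requires either enlarging $J_0$, which changes the specified procedure, or varying the threshold, which is the layering argument you do not carry out.

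The missing observation is that the sharp bad-event bound already follows from your own estimates.
\begin{itemize}
\item The $B_0$ Hoeffding bound holds conditionally on \emph{every} realization of the initial estimates, and $M(o)\le N$ always.
\item Hence $\E M(o)\le\max_\gamma s_\gamma+2N/\lambda_0$, so $\lambda_0\delta\le L_0\max_\gamma s_\gamma+1$ everywhere, not only on $\mathcal G$.
\item Also $\sum_\gamma\prod k_{\mathfrak e}w_{\mathfrak e}/\sum_\gamma\prod a_{\mathfrak e}\ge e^{L_0\min_\gamma s_\gamma}$.
\end{itemize}
Combining these,
\[
 a_{\mathrm{init}}\ge e^{-1}\exp\Bigl\{-L_0\bigl(\max_\gamma s_\gamma-\min_\gamma s_\gamma\bigr)\Bigr\}\ge e^{-\lambda_0-1}.
\]
The last step holds because $|c_{\mathfrak e}-\widehat c_{\mathfrak e}|\le1$ and every path has $N$ edges, so $L_0s_\gamma$ ranges over an interval of length at most $2NL_0=\lambda_0$. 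Even the trivial bounds $\delta\le\tfrac12$ and $L_0s_\gamma\ge-\lambda_0/2$ give $a_{\mathrm{init}}\ge e^{-\lambda_0}$; your product $e^{-2\lambda_0}e^{-2L_0N}e^{-2L_0N}$ overcounts.

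This lower bound exactly offsets the tail. Since $a_{\mathrm{init}}\ge e^{-3}$ on $\mathcal G$, you get $\E(a_{\mathrm{init}}^{-1})\le e^{3}+e^{-\lambda_0}e^{\lambda_0+1}=e^3+e$. This is the paper's argument, which it phrases as $0\le z_*-v_*\le1/\lambda_0$. With it, apply Lemma~\ref{supp:fw:retry} conditionally on the initial estimates and then average. Your per-trial cost $O\{EN^3\log(4EN)\}$ and initialization cost then give the stated bound, and no layering is needed.
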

\begin{proof}[Proof of Lemma~\ref{br5:dag}]
The accepted geometric proposal defining $J$ has mass
$\rho_0^j/(jC_0)$. The multiplier $8/7$ is admissible because
$(1-w_{\mathfrak e})/\rho_0\le6/7$, and
\[
 \E D_{\mathfrak e}=\sum_{j\ge1}\frac{\rho_0^j}{jC_0}
       \left(\frac{1-w_{\mathfrak e}}{\rho_0}\right)^j
       =-\frac{\log w_{\mathfrak e}}{C_0}.
\]
Hence the stated mixture returns a $c_{\mathfrak e}$-coin. Both the geometric
rejection loop and its accepted count have bounded means, proving
the uniform work bound for this transformation. The factory's scalar
work assertion includes its geometric inner loops: when its multiplier
belongs to a fixed compact subset of $(1,\infty)$, their means remain
uniformly bounded. Each outer update follows a nonempty inner loop
and uses fixed scalar work, including powers evaluated by exponentials
and logarithms.

Condition on the initial estimates $(\widehat c_{\mathfrak e})_{\mathfrak e}$ and denote
\[
 R_\gamma=L_0\sum_{\mathfrak e\in\gamma}(c_{\mathfrak e}-\widehat c_{\mathfrak e}),\qquad
 v_\gamma=\frac12+R_\gamma/\lambda_0,\qquad
 v_* =\max_\gamma v_\gamma.
\]
The proposal law is proportional to
$\prod_{\mathfrak e\in\gamma}k_{\mathfrak e}\exp\{L_0(\widehat c_{\mathfrak e}-1)\}$.
For fresh edge estimates $(\overline c_{\mathfrak e})_{\mathfrak e}$, let
$z_* =\E(\widehat v_*\mid\text{initial estimates})$.
It is common to all proposed paths, and a factor coin has mean
$1+v_\gamma-z_*$.
Jensen's inequality gives $z_*\ge v_*$.
Hoeffding's inequality and the choice of $B_0$ imply
\[
 \Pp\!\left(\max_{\mathfrak e}|\overline c_{\mathfrak e}-c_{\mathfrak e}|>1/\lambda_0
                 \mid\text{initial estimates}\right)
 \le (2\lambda_0)^{-1}.
\]
On its complement every empirical path error is at most
$1/(2\lambda_0)$, and errors are always at most one. Consequently
$0\le z_*-v_*\le1/\lambda_0$.
The factors use independent sets of edge estimates and fresh uniform
variables. Averaging their product over the Poisson count gives
\[
 A_\gamma
 =\exp\{\lambda_0(v_\gamma-z_*)\}
 =\exp\{\lambda_0(1/2-z_*)\}\exp(R_\gamma).
\]
The first factor is common to all candidate paths. Multiplying this
acceptance probability by the proposal weight therefore gives a
common multiple of $\prod_{\mathfrak e\in\gamma}k_{\mathfrak e}w_{\mathfrak e}$. This proves the exact
path law for every realization of the initial estimates.

To bound retries, let
$\mathcal G=\{\max_{\mathfrak e}|\widehat c_{\mathfrak e}-c_{\mathfrak e}|\le(NL_0)^{-1}\}$.
The choice of $J_0$ gives
$\Pp(\mathcal G^c)\le e^{-\lambda_0}$.
On $\mathcal G$, all $R_\gamma\in[-1,1]$; their range is always
at most $\lambda_0$. If $a_{\rm init}$ is the mean acceptance
conditional on the initial estimates, then we have
\[
 a_{\rm init}\ge e^{-\lambda_0-1},\qquad
 a_{\rm init}\ge e^{-3}\ \text{on }\mathcal G,
 \qquad \E(a_{\rm init}^{-1})\le e^3+e.
\]
Each empirical factor costs $O(EB_0)$ expected work uniformly in
the initial estimates and candidate path. The Poisson loop uses
mean $\lambda_0+1$ exponential increments, including the overshoot,
so one trial costs $O\{EN^3\log(4EN)\}$.
Generating the initial estimates costs
$O\{EN^2(N+\log E)\}$ and is of no larger order.
Conditional on the initial estimates, the trial records are
independent and identically distributed.
Lemma~\ref{supp:fw:retry}, followed by averaging the bound on
$a_{\rm init}^{-1}$, finishes the proof.
\end{proof}

\subsection{Density bounds}
\label{br5:density-proof}

The lower bound below determines how small the graph's omitted
components must be relative to the bridge normalizing constant.
The upper bound is used when averaging the insertion cost under
the bridge law.

\begin{lemma}[Density bounds for the bridge construction]
\label{br5:density-bounds}
There are computable class constants $c_*,C_*,C_{\mathrm{den}}>0$
and $\delta_*>0$ such that
\begin{equation}\label{br5:global-density}
 \begin{gathered}
 p_{T_0}=p(0,0;2n,0)\ge c_*e^{-C_*n},\qquad n\ge1,\\
 \sup_{u,v}p(s,u;t,v)\le C_{\mathrm{den}}\delta^{-d/2}
 \quad\text{if }t-s\ge\delta,\quad0<\delta\le\delta_*.
 \end{gathered}
\end{equation}
\end{lemma}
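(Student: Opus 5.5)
Both bounds will come from Lemma~\ref{core:parametrix}, applied with $T_{\mathrm{win}}=1$, together with the pointwise Chapman--Kolmogorov identity. Short-time pointwise control will be propagated to long times by chaining.

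\emph{Upper bound.} Take $\delta_*=\min\{1,T\}$; the time domain of the class caps every interval, and for the zero-endpoint construction this is the interval $[0,2n]$. Let $t-s\ge\delta$ with $\delta\le\delta_*$. Set $h=\delta/2$ and $r=t-h$, so that $r-s\ge h>0$. Chapman--Kolmogorov gives
\[
 p(s,u;t,v)=\int p(s,u;r,z)\,p(r,z;t,v)\,\dd z\le\sup_z p(r,z;t,v),
\]
because $p(s,u;r,\cdot)$ is a probability density. Since $h\le1$, Lemma~\ref{core:parametrix} yields
\[
 p(r,z;t,v)\le C_p^*G_{\kappa,h}(v-z)\le C_p^*(2\pi\kappa h)^{-d/2}.
\]
Hence $C_{\mathrm{den}}=C_p^*(\pi\kappa)^{-d/2}$ works. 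This part is routine.

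\emph{Lower bound.} First I will get a short-time Gaussian lower bound on the diagonal. For $h\le h(L)$ with $L=1$, i.e.\ $|w|\le\sqrt h$, \eqref{core:localbounds} and unbiasedness give $R(e)=\E\widehat R_{\mathrm{loc}}\ge\tfrac34 r_0\ge\tfrac34\rho_1$. Thus
\[
 p(e)\ge c_\flat h^{-d/2}\quad\text{whenever } |v-u|\le\sqrt h,\ h\le h_\flat:=h(1),
\]
where $c_\flat=\tfrac34\rho_1(2\pi\kappa)^{-d/2}$. Next I will chain. Split $[0,2n]$ into $M=\lceil 2n/h_\flat\rceil$ equal steps of length $h'=2n/M\le h_\flat$; note $h'\ge h_\flat/2$ when $n\ge1$ and $h_\flat\le1$. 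Apply Chapman--Kolmogorov over the $M-1$ intermediate states. Keep only the contribution in which every intermediate state lies in the ball $B_\epsilon$ of radius $\epsilon=\sqrt{h'}/2$ about $0$. Consecutive states are then within $\sqrt{h'}$ of each other, and so is every state from the endpoint $0$. Positivity of all integrands then gives
\[
 p(0,0;2n,0)\ge\bigl(c_\flat h'^{-d/2}\bigr)^{M}\,|B_\epsilon|^{M-1}
 =c_\flat^{M}\,\omega_d^{M-1}2^{-d(M-1)}h'^{-d/2},
\]
with $\omega_d$ the unit-ball volume. The right side is at least
\[
 c_*e^{-C_*n}\quad\text{with}\quad C_*=\tfrac{2}{h_\flat}\bigl|\log\bigl(c_\flat\omega_d2^{-d}\bigr)\bigr|+\text{const},
\]
using $M\le 2n/h_\flat+1$ and $h'\le1$. (If the base $c_\flat\omega_d2^{-d}\ge1$, the bound is only better.) All constants come from computable class quantities.

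\emph{Main obstacle.} The delicate point is extracting a \emph{pointwise} short-time lower bound from the paper's earlier results without a separate heat-kernel argument. I plan to use the almost sure lower bound in \eqref{core:localbounds}, whose mean is exactly $R(e)$. The only fallback needed is a check that $h(1)$ is computable and that the equal-step chain respects $h'\le h_\flat$ for every $n$. Both hold by construction.
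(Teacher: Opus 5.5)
Your proof is correct and follows essentially the same route as the paper's. For the upper bound, both split off a final short interval and apply the short-time Gaussian bound (the paper uses $K_{\mathrm b}G_{c,\delta}$ on an interval of length exactly $\delta$, you use $C_p^*G_{\kappa,\delta/2}$ on half of it). For the lower bound, both chain $p\ge\frac34 g$ from the local estimator over $\lceil 2n/h(1)\rceil$ equal steps, with the intermediate states confined to a small set (the paper uses a cube, you use a ball).

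One small slip: from $R(e)\ge\frac34\rho_1$ alone you would only get $p\ge\frac34\rho_1 G_{\kappa,h}(w)$, which still carries the factor $e^{-|w|^2/(2\kappa h)}$. Your constant $c_\flat$ is nonetheless valid, because $p\ge\frac34 r_0G_{\kappa,h}(w)=\frac34 g(e)\ge\frac34(2\pi\Lambda h)^{-d/2}e^{-1/(2\lambda)}$, and this equals $c_\flat h^{-d/2}$ exactly.
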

\begin{proof}
For the lower bound, choose a fixed
$0<\delta\le\min\{h(1),h_{\mathrm{env}},1/2\}$ from
Lemma~\ref{core:localpositive}, and let
\[
 a_*=\tfrac34(2\pi\Lambda\delta)^{-d/2}e^{-1/(16\lambda)}.
\]
Choose a radius $r_*>0$ so that
$r_*\le\sqrt\delta/8$ and the cube
$K=[-r_*/\sqrt d,r_*/\sqrt d]^d$ has volume $v_*\le1/(2a_*)$.
For endpoints
in $K$ and $\delta/2\le\tau\le\delta$, we have
$|v-u|\le2r_*$ and $|v-u|^2/\tau\le1/8$.
Lemma~\ref{core:localpositive} therefore gives
$p\ge3g/4\ge a_*$. Partition
$T_0=2n$ into $M=\lceil T_0/\delta\rceil$ equal durations. They
belong to $[\delta/2,\delta]$, and Chapman--Kolmogorov identity restricted to
$K$ at the internal times implies
\[
 p_{T_0}\ge a_*^M v_*^{M-1}
   =a_*(a_*v_*)^{M-1}\ge c_*e^{-C_*n}.
\]
For example, $c_*=a_*$ and
$C_*=(4/\delta)|\log(a_*v_*)|$ suffice.
For the upper bound, fix $0<\delta'\le h_{\mathrm{env}}$ and a
transition of duration at least $\delta'$. Split off a final interval
of length $\delta'$, use $p\le K_{\mathrm b}G_{c,\delta'}$ there,
and integrate the preceding transition probability. If the whole
duration equals $\delta'$, apply the short-time bound directly.
Thus one may take
$\delta_*=h_{\mathrm{env}}$ and
$C_{\mathrm{den}}=K_{\mathrm b}(2\pi c)^{-d/2}$.

\end{proof}

\subsection{Choice of the fixed parameters}
\label{br5:parameter-choice}

We calibrate all constants before sampling.  The constant $C_*$ used below comes from the density lower bound
$p_{T_0}\ge c_*e^{-C_*n}$ established in
Section~\ref{br5:density-proof}. The density bounds fix
the thresholds for the graph's omitted components; the same
calibration also covers the sharp-coin and short-transition calls
needed for insertion. Set
\begin{equation}\label{br5:insertion-constants}
 \begin{gathered}
 D=\max\{1,(64c/9)(C_*+1)\},\qquad D_0=4c(C_*+1),\\
 D_1=\max\{D_0+1,2cD_0/\lambda+1\},\qquad
 D_{\sharp}=4\max\{D,D_1,1\},\\
 C_g'=\left(\frac{\Lambda+1}{\lambda}\right)^{d/2},\qquad
 V_+=\Mc+\Lambda/2,\qquad R_0=2V_++C_*+1.
 \end{gathered}
\end{equation}
Here $D$ controls bad edges of the graph, $R_0$ determines the box,
and $D_0,D_1,C_g'$ are used for insertion. In particular,
$9D/(64c)\ge C_*+1$ and $R_0-2V_+-C_*=1$; these inequalities
will bound the masses of the proposals outside the graph.

Apply Lemma~\ref{br5:local-inputs} with threshold $D_{\sharp}$,
taking its constant $c_0\le1$. Let $h_{\mathrm{tilt}}>0$ be the
short-transition duration bound for the fixed coefficient class with
bound $\Mc+d2^k\Mc^2$ and the original ellipticity bounds.
This enlarged class contains the tilted diffusions used for exits.
Decrease the local lemma's constant $h_0$ until
\begin{equation*}
 h_0\le\min\{1/2,h_{\mathrm{env}},h_{\mathrm{tilt}}\},
 \qquad L_a\{h_0+\sqrt{D_1h_0}\}\le1/2.
\end{equation*}
The first condition makes all short-transition calls admissible;
the second controls the change of covariance during insertion.
Such a positive $h_0$ exists because the left side of the second
inequality tends to zero with $h_0$.

Finally, the cell width must control both the relative variation of
$p$ and that of $g$. Put
\[
 A_1=2d^2\Mc^2,\qquad
 C_G=\frac{2\sqrt{D_{\sharp}}}{\lambda}
       +\sqrt d\,A_1\left(\frac{\sqrt d}{2\lambda}
                          +\frac{D_{\sharp}}{2\lambda^2}\right),
\]
and choose
\begin{equation}\label{br5:cell-choice}
 0<c_1\le\min\left\{1,\frac1{2\sqrt d},
 \frac{\sqrt D}{2c_0\sqrt d},
 \frac{2(200C_{\mathrm{Hol}})^{-1/\alpha}}{\sqrt d},
 \frac1{300\sqrt d\,C_Gc_0}\right\}.
\end{equation}
All choices depend only on the original class and have fixed finite
computational cost. The exponent $\alpha$ is supplied by Lemma~\ref{br5:local-inputs}
for the original coefficient class.
The following lemma verifies that the parameter choices above give
the within-cell density comparisons and sharp-coin admissibility
required for the cell proposal and its acceptance tests.

\begin{lemma}[Comparison within cells]\label{br5:cell-comparison}
The preceding calibration implies \eqref{br5:cell-errors} and
\eqref{br5:cell-slack}. All sharp-coin calls on good cell edges,
including actual endpoints, satisfy \eqref{br5:moderate}.
\end{lemma}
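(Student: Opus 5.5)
We verify the three parts of \eqref{br5:cell-errors}, then derive \eqref{br5:cell-slack} from them, and finally check \eqref{br5:moderate} for every sharp-coin call on a good edge.

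\textbf{Geometric bound.} Since $u\in C$ and $u_0$ is its center, $|u-u_0|\le\sqrt d\,\ell/2$, and likewise for $v$. With $\ell=c_1\rho N^{-1/\alpha}\le c_1\rho$, $\rho\le c_0\sqrt h$ and $c_1\le\sqrt D/(2c_0\sqrt d)$, we get
\[
 \sqrt d\,\ell\le \tfrac12\sqrt{D h}\le L/4,
\]
because $L=\sqrt{Dnh}\ge\sqrt{Dh}$. In particular $|(v-u)-(v_0-u_0)|\le L/4$, so on a good edge $|v-u|\le 5L/4$. This gives $|v-u|^2/h\le (25/16)Dn\le D_\sharp n$ and similarly for the center transition $e^0$. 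Since $h\le h_0n^{-3}$ by \eqref{br5:grid}, both $e$ and $e^0$ satisfy \eqref{br5:moderate}.

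\textbf{Density ratio.} We apply \eqref{br5:relative-holder} with base edge $e^0$. We need $|u-u_0|\le\rho/4$; this holds because $\sqrt d\,\ell/2\le c_1\sqrt d\,\rho/2\le\rho/4$ using $c_1\le 1/(2\sqrt d)$. Then
\[
 \frac{|u-u_0|}{\rho}\le \frac{\sqrt d\,c_1}{2}N^{-1/\alpha},
\]
so each Hölder term is at most $(\sqrt d c_1/2)^\alpha N^{-1}$. The choice $c_1\le 2(200C_{\mathrm{Hol}})^{-1/\alpha}/\sqrt d$ gives $(\sqrt d c_1/2)^\alpha\le 1/(200C_{\mathrm{Hol}})$. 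Summing the two endpoint terms yields $|p(e)/p(e^0)-1|\le 1/(100N)$. The fixed endpoints are singleton cells, so their terms vanish.

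\textbf{Frozen kernel ratio.} For $g$ we compute directly from the formula, writing
\[
 \log\frac{g(e)}{g(e^0)}=-\frac{w^\top A^{-1}w-w_0^\top A_0^{-1}w_0}{2h}-\tfrac12\log\frac{\det A}{\det A_0},
 \qquad A=a(t,v),\ A_0=a(t,v_0).
\]
The quadratic difference splits into two parts. The first is $w^\top A^{-1}w-w_0^\top A^{-1}w_0$, bounded by $|w-w_0|(|w|+|w_0|)/\lambda$. The second is $w_0^\top(A^{-1}-A_0^{-1})w_0$, bounded by $|w_0|^2\lambda^{-2}\|A-A_0\|$. We use $\|A-A_0\|\le A_1\sqrt d\,|v-v_0|$, with $A_1$ the Lipschitz constant of $a$ in space. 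The log-determinant term is at most $d\lambda^{-1}\|A-A_0\|/2$.

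Now insert $|w-w_0|\le\sqrt d\,\ell$, $|w|,|w_0|\le\sqrt{D_\sharp nh}$ and $h\le1$. Every term becomes at most a multiple of $\ell\sqrt{n/h}$, with total coefficient $\sqrt d\,C_G$ up to the numerical factors built into $C_G$. We then use $\ell\sqrt{n/h}\le c_1c_0N^{-1/\alpha}\sqrt n/(1+\sqrt n)\le c_1c_0/N$, valid since $\alpha\le1$. Together with $c_1\le1/(300\sqrt d\,C_Gc_0)$, this gives $|\log(g(e)/g(e^0))|\le 1/(300N)$, hence $|g(e)/g(e^0)-1|\le 1/(100N)$.

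\textbf{Slack bounds.} Write $p(e)=p(e^0)(1+\varepsilon)$ with $|\varepsilon|\le 1/(100N)$. Then
\[
 \frac{U_{\mathfrak e}}{p(e)}=\frac{1+1/(10N)}{1+\varepsilon}.
\]
The lower bound is $(1+1/(10N))/(1+1/(100N))\ge 1+9/(101N)$, using $N\ge1$ in the denominator. The upper bound is $(1+1/(10N))/(1-1/(100N))\le 1+1/(9N)$.

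\textbf{Main obstacle.} The $g$-comparison is the delicate step. There the displacement $|w|\sim\sqrt{nh}$ multiplies the cell error divided by $h$, and one must track exactly how the factor $(1+\sqrt n)^{-1}$ in $\rho$ cancels the $\sqrt n$ growth so that a bound of order $1/N$ survives. The remaining steps are direct substitutions of the calibration \eqref{br5:cell-choice}.
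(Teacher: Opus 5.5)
Your proposal is correct and follows essentially the paper's route. You use the relative H\"older estimate \eqref{br5:relative-holder} based at the center edge $e^0$ for $p$, a Lipschitz bound on $\log g$ (a direct difference where the paper differentiates along the segment, giving the same $C_G\sqrt{n/h}\,\ell$ bound), and elementary algebra for \eqref{br5:cell-slack}. A few bookkeeping slips are easily repaired:

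\begin{itemize}
\item The inequality $\tfrac12\sqrt{Dh}\le L/4$ fails for $n<4$. Keep the factor $(1+\sqrt n)^{-1}\le 1/2$ in $\rho$; this gives $\sqrt d\,\ell/L\le\sqrt d\,c_1c_0/(2\sqrt D)\le 1/4$, as in the paper.
\item Bound the log-determinant term via $|\operatorname{tr}(A^{-1}M)|\le\sqrt d\,\lambda^{-1}\|M\|_F$ rather than $d\lambda^{-1}\|M\|$, so it fits inside $C_G$.
\item Convert $n$ into $\sqrt{n/h}$ using $nh\le 1$, which follows from $h\le h_0n^{-3}$; the condition $h\le 1$ alone is not enough.
\end{itemize}
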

\begin{proof}
An endpoint differs from its center by at most $\sqrt d\ell/2$,
which is at most $\rho/4$ by \eqref{br5:cell-choice}. The relative
H\"older estimate therefore bounds the density error by
\[
 2C_{\mathrm{Hol}}(\sqrt d c_1/2)^\alpha N^{-1}
 \le(100N)^{-1}.
\]
Also $\sqrt d\ell/L\le\sqrt d c_1c_0/(2\sqrt D)\le1/4$.
Hence both the center edge and every actual edge in its cells have
displacement at most $5L/4$, within the calibrated threshold
$D_{\sharp}$.

For the frozen Gaussian, differentiate its log density along the
segment joining the center endpoints to the actual endpoints. Every
intermediate edge remains within the calibrated displacement threshold.
The covariance derivative bound and ellipticity bound the derivative
along this segment by $C_G\sqrt{n/h}$ times the maximum endpoint
displacement, with $C_G$ as chosen above. Thus
\[
 \left|\log\frac{g(e)}{g(e^0)}\right|
 \le \frac{\sqrt d\ell}{2}C_G\sqrt{n/h}
 \le\frac{\sqrt d c_1c_0C_G}{2N}
 \le\frac1{600N}.
\]
Here $h\le n^{-3}$ controls covariance-derivative terms, and
$N^{-1/\alpha}\le N^{-1}$. Exponentiating yields the second
inequality in \eqref{br5:cell-errors}. Finally, for the graph edge
$\mathfrak e$ corresponding to the actual transition $e$,
\[
 \frac{1+1/(10N)}{1+1/(100N)}
 \le\frac{U_{\mathfrak e}}{p(e)}
 \le\frac{1+1/(10N)}{1-1/(100N)}.
\]
The bounds \eqref{br5:cell-slack} follow by direct subtraction of one.
\end{proof}

\subsection{Acceptance tests inside the selected cells}
\label{br5:cell-acceptance-proof}

The graph draw followed by uniform sampling in its cells has density
$Z_0^{-1}\prod_{\mathfrak e\in\gamma} U_{\mathfrak e}$ on $E_0$. To implement rejection sampling from
this envelope, we need a coin of mean $p(e)/U_{\mathfrak e}$ for each actual
transition edge $e$ represented by a graph edge $\mathfrak e$. Recall that
\[
 q=\frac{p(e)}{2g(e)},\qquad
 q_0=\frac{p(e^0)}{2g(e^0)},\qquad
 \beta=\frac{g(e)}{(1+1/(10N))g(e^0)},\qquad A=\beta q.
\]
Then $A/q_0=p(e)/U_{\mathfrak e}$.

We implement the Bernoulli race in Section~\ref{br5:cell-acceptance}
as follows. For the $A$-coin, apply
$\operatorname{LF}(2\beta,1/4;q/2)$ to the product of a sharp
$q$-coin and an independent fair coin. For the second coin, choose
fairly between a complemented $q_0$-coin and a fresh $A$-coin,
apply $\operatorname{LF}(2,1/(50N);(1-q_0+A)/2)$, and complement
the output. The resulting mean is $q_0-A$. Choose one of the labels
$A$ and $q_0-A$ with probability $1/2$ and toss its coin. On failure,
repeat this choice and toss with fresh randomness; on success, return
one exactly when the label was $A$. All sharp coins, including those
at the cell centers after path selection, are fresh.

\begin{lemma}[Acceptance within cells]\label{br5:ratio}
The race returns a $p(e)/U_{\mathfrak e}$-coin with expected work $O(N)$,
uniformly over good edges and endpoints in their cells. Applying
these coins independently conditional on the states to all $N$
edges gives one cell-component trial with accepted density
$F\ind_{E_0}/Z_0$. The uniform state draws and acceptance tests
have total expected work $O(N^2)$, uniformly in the selected path.
\end{lemma}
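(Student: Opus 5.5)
The proof has three parts: admissibility of the two factory calls, correctness and cost of the race, and assembly over the $N$ edges. First I check that the Bernoulli-factory calls are admissible. By \eqref{br5:cell-errors}, $g(e)/g(e^0)\in[1-1/(100N),1+1/(100N)]$, so $\beta\in[(1-1/(100N))/(1+1/(10N)),1]$, and $2\beta$ stays in a fixed compact subset of $(1,\infty)$ (for instance $[3/2,2]$ for $N\ge1$). Lemma~\ref{br5:local-inputs} and Lemma~\ref{br5:cell-comparison} give $q\in[1/4,3/4]$, so $2\beta\cdot q/2=\beta q\le 3/4=1-1/4$. Hence $\operatorname{LF}(2\beta,1/4;q/2)$ is valid and returns an $A$-coin with $O(1)$ expected work, since sharp coins have uniformly bounded work.

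For the second coin, the input has mean $(1-q_0+A)/2$ and the multiplier is $2$. The required slack is $2\cdot(1-q_0+A)/2=1-(q_0-A)\le 1-1/(50N)$, that is, $q_0-A\ge 1/(50N)$. I obtain this from \eqref{br5:cell-slack}: $q_0-A=q_0(1-p(e)/U_{\mathfrak e})\ge q_0\,c_2/(N+c_2)$. With $q_0\ge1/4$ and $c_2=9/101$ this gives roughly $1/(46N)$, which I will verify is at least $1/(50N)$ (the margin is tight and is the point to double check). The factory then has expected work $O((1+1)\cdot 50N)=O(N)$, and complementing its output gives mean $q_0-A$.

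Next I analyze the race. Each round selects a label fairly and succeeds with probability $(A+(q_0-A))/2=q_0/2\ge1/8$. The event of success with label $A$ has probability $A/2$, so conditioning on success returns one with probability $A/q_0=p(e)/U_{\mathfrak e}$. By Lemma~\ref{supp:fw:retry}, with per-round expected work $O(N)$ (dominated by the second factory) and success probability at least $1/8$, the total expected work is $O(N)$. This bound holds uniformly over good edges and over endpoints in their cells, because all the bounds used are uniform.

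Finally I assemble the edges. Conditional on the selected path and the uniform states, I toss the $N$ ratio coins independently. Their product has mean $\prod_i p(e_i)/U_{\mathfrak e_i}$. Multiplying by the proposal density $Z_0^{-1}\prod_i U_{\mathfrak e_i}$ on $E_0$ gives $F\ind_{E_0}/Z_0$, as computed in Section~\ref{br5:cell-acceptance}. The uniform draws cost $O(N)$ and the $N$ coins cost $O(N)$ each, so the total expected work is $O(N^2)$ uniformly in the path. If all tests are run, or even if the loop stops at the first failure, the work is still bounded by the sum. The main obstacle is the slack verification $q_0-A\ge1/(50N)$, which needs the lower constant $c_2$ of \eqref{br5:cell-slack} together with $q_0\ge1/4$.
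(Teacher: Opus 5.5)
Your proposal is correct and follows the paper's proof essentially step for step. The steps match: admissibility of $\operatorname{LF}(2\beta,1/4;q/2)$ from $9/10\le\beta\le1$; the gap $q_0-A\ge 1/(50N)$ from the within-cell slack and $q_0\ge1/4$; the race with success probability $q_0/2\ge1/8$ and Lemma~\ref{supp:fw:retry}; and the product identity over the $N$ edges with $O(N^2)$ total work. The check you deferred does go through: $q_0c_2/(N+c_2)\ge 9/(404N+36)\ge 9/(440N)>1/(50N)$ for every $N\ge1$, which is the same intermediate bound $9/(440N)$ the paper reaches.
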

\begin{proof}[Proof of Lemma~\ref{br5:ratio}]
The cell errors \eqref{br5:cell-errors} imply
$9/10\le\beta\le1$, so $2\beta$ stays in a fixed compact subset
of $(1,\infty)$. Also $A=\beta q\le q_0\le3/4$, making the first
factory admissible with gap $1/4$. More precisely,
\[
 \frac{A}{q_0}=\frac{p(e)}{U_{\mathfrak e}}
 \le\frac{1+1/(100N)}{1+1/(10N)}
 =\frac{100N+1}{100N+10},
\]
and therefore
\[
 q_0-A\ge\frac14\frac9{100N+10}
 \ge\frac9{440N}>\frac1{50N}.
\]
The multiplier-two factory is thus admissible with its stated gap,
and its complement has mean $q_0-A$ and expected work $O(N)$.
Each race trial succeeds with probability
$(A+q_0-A)/2=q_0/2\ge1/8$. Conditional on success, its label is
$A$ with probability $A/q_0=p(e)/U_{\mathfrak e}$. A trial has expected work
$O(N)$, so Lemma~\ref{supp:fw:retry} gives the same order for the
complete race. This argument permits dependence between a trial's
work and its success bit.

For the cell component, let $e_i$ be the actual transition represented
by graph edge $\mathfrak e_i$ of the selected path. Multiplying its
proposal density by the fresh acceptance probabilities gives
\[
 \frac{\prod_i U_{\mathfrak e_i}}{Z_0}
 \prod_i\frac{p(e_i)}{U_{\mathfrak e_i}}\ind_{E_0}
 =\frac{F\ind_{E_0}}{Z_0}.
\]
For every selected path and every choice of states in its cells,
\eqref{br5:cell-slack} bounds the acceptance probability below by
$(1+C_2/N)^{-N}\ge e^{-C_2}$. Uniform state generation costs
$O(N)$, and the $N$ fresh acceptance coins cost $O(N^2)$ in expectation.
\end{proof}

\subsection{Proposals for paths outside the finite graph}
\label{br5:residual-proof}

For each selected bad-edge or exit label, the construction makes
one proposal and applies its classification and acceptance tests.
The following result verifies the accepted density needed for
component selection in Section~\ref{br5:component-plan}. 

\begin{lemma}[Bad-edge and exit components]
\label{br5:residual-guarantee}
Every comparison in the bad-edge and exit constructions is valid on
its retained event. Their one-trial accepted subdensities are,
respectively,
$F\ind_{B_i}/D_i$ and $F\ind_{X_\zeta}/D_\zeta$. A bad-edge trial
has expected work $O(N+NS_{\mathrm{cell}})$, and an exit trial has
expected work $O(N+n+NS_{\mathrm{cell}})$, uniformly in its label.
\end{lemma}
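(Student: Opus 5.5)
We verify each component by writing its one-trial accepted subdensity as (proposal density) $\times$ (classification indicator) $\times$ (product of acceptance probabilities), and checking that the product collapses to $F\ind/D$. Independence of all coins and uniforms conditional on the proposed states makes the acceptance probability a product of the individual means. We treat the bad-edge case first, then the exit case via Girsanov, and finally the work bounds.

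\emph{Bad-edge component, $i<N$.} The proposal density of $(z_1,\ldots,z_{N-1})$ is
\[
\prod_{l\ne i,N}p_l\cdot G_{2c,h}(w_i).
\]
The exact short transitions are admissible since $h\le h_0\le h_{\mathrm{env}}$. The two transition coins contribute $p_i/(K_{\mathrm b}G_{c,h}(w_i))$ and $p_N/(K_{\mathrm b}G_{c,h}(w_N))$, and the comparisons \eqref{br5:jump-tests} contribute $K_{\mathrm b}G_{c,h}(w_i)/(\delta_JG_{2c,h}(w_i))$ and $K_{\mathrm b}G_{c,h}(w_N)/Q_h$. The product is $F\ind_{B_i}/(\delta_JQ_h)=F\ind_{B_i}/D_i$.

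Validity of the comparisons is the only real check. The second holds because $G_{c,h}\le(2\pi ch)^{-d/2}$. For the first,
\[
K_{\mathrm b}G_{c,h}(w)/G_{2c,h}(w)=K_{\mathrm b}2^{d/2}e^{-|w|^2/(4ch)}.
\]
On $B_i$ the actual displacement exceeds $3L/4$ by \eqref{br5:cell-errors}, so $|w_i|^2/h>9Dn/16$. This gives the bound $K_{\mathrm b}2^{d/2}e^{-9Dn/(64c)}=\delta_J$, so the ratio is at most one. The case $B_N$ is identical with the prefix proposed exactly.

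\emph{Exit component.} This is the main obstacle. We first condition on the path and the stopping time $\tau$, which is a coarse-grid stopping time. By Girsanov for the tilt $b_\theta=b+a\theta$ up to $\tau$, the likelihood of the original law relative to the proposal on $\mathcal F_\tau$ is
\[
\exp\Bigl\{-\theta\cdot\int_0^\tau\sigma\,dW-\tfrac12\int_0^\tau\theta^\top a\theta\Bigr\}.
\]
We rewrite this without the stochastic integral via the exponential martingale $\exp\{\theta\cdot(X_t-X_0)-\int_0^t V\,dr\}$ under the original law. This gives the density
\[
\exp\Bigl\{-\theta\cdot X_\tau+\int_0^\tau V(r,X_r)\,dr\Bigr\}\quad\text{(using $X_0=0$)}.
\]
The Poisson thinning with rate $q_V$ and acceptance $(V-V_-)/q_V$ has mean $\exp\{\int_0^\tau(V-V_+)\,dr\}$; this uses $V_-\le V\le V_+$, which follows from $|\theta|=1$. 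Multiplying by the tests $U_1,U_2$ therefore gives the likelihood divided by $e^{R_-}e^{V_+T_0}=\delta_X^{-1}$, up to the correct sign convention.

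For validity, on $X_\zeta$ the exit is through face $\zeta$, so $\theta\cdot X_\tau\ge B\ge R_-$; hence the $U_1$ comparison is at most one. The $U_2$ comparison is at most one since $\tau\le T_0$. After $\tau$ the drift is the original one, so the strong Markov property and the Chapman--Kolmogorov identity integrate out the Poisson-time states. This leaves the original prefix density times $\ind_{X_\zeta}/\delta_X$. The final transition coin and the comparison with $K_{\mathrm b}G_{c,h}(w_N)/Q_h$ then give $F\ind_{X_\zeta}/(\delta_XQ_h)$.

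The delicate points in this step are the following. The optional stopping must be justified, which we do using bounded coefficients and $\tau\le T_0$. The identification of densities at the coarse times must hold despite the path-dependent drift switch. The tilted class must admit the short-transition sampler, which holds because $h\le h_{\mathrm{tilt}}$.

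\emph{Work.} The work bounds come from counting. A trial uses $N$ short transitions at bounded expected cost by Lemma~\ref{core:shorttransition}, plus $O(1)$ coins by Lemma~\ref{core:shortcoin}. Classification requires locating cells and testing the box for each of the $N$ states; we bound this crudely by $O(NS_{\mathrm{cell}})$. The exit trial adds a Poisson number of observations with mean $q_VT_0=O(n)$, each handled by one short transition of bounded expected work. This yields $O(N+n+NS_{\mathrm{cell}})$ uniformly in the label.
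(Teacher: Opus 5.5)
Your route is the paper's. You multiply the proposal density by the product of acceptance means. For the bad-edge comparisons you use $K_{\mathrm b}G_{c,h}(w)/G_{2c,h}(w)=K_{\mathrm b}2^{d/2}e^{-|w|^2/(4ch)}$ with $|w_i|>3L/4$. For exits you implement the Girsanov likelihood $\exp\{-\theta\cdot X_{\tau}+\int_0^{\tau}V(t,X_t)\dd t\}$ by two one-time comparisons and Poisson thinning. The subdensities you reach are correct, but several steps need repair:

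\begin{itemize}
\item The normalizing constant is misstated. The three factors multiply to $e^{-\theta\cdot X_\tau+R_-}e^{V_+(\tau-T_0)}e^{\int_0^\tau(V-V_+)\dd t}=\mathcal L\,e^{R_--V_+T_0}=\mathcal L/\delta_X$. Your $e^{R_-}e^{V_+T_0}$ is not $\delta_X^{-1}$, and dividing the likelihood by $\delta_X^{-1}$ would give $\mathcal L\delta_X$.
\item The bounds $V_-\le V\le V_+$ do not follow from $|\theta|=1$ alone, which only yields $|\theta\cdot b|\le\sqrt d\,\Mc$. They use $\theta=\pm e_j$, so $|\theta\cdot b|=|b_j|\le\Mc$, together with $0\le\theta^\top a\theta\le\Lambda$.
\item Because $\tau=\infty$ is possible, take the change of measure at the bounded stopping time $\tau\wedge(T_0-h)$, which equals $\tau$ on $X_\zeta$.
\item The $B_N$ comparison is not literally identical to the $i<N$ case. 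It needs $K_{\mathrm b}G_{c,h}(w_N)\le\delta_JG_{2c,h}(w_N)\le\delta_JQ_h$, where the last step uses $K_{\mathrm b}\ge1$.
\item The merged-time issue you flag is settled because $\tau$ takes values on the coarse grid. The active drift on each merged interval is therefore determined by coarse states already generated. Successive exact transitions then reproduce the full proposal path and the independent Poisson process observed at the merged times. The Poisson comparisons can then be averaged conditionally on the full path.
\end{itemize}

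The step you leave genuinely unsupported is the $O(NS_{\mathrm{cell}})$ classification cost. Proposed states can lie anywhere in $\R^d$, and the primitive model has no floor operation. So locating the cell of an arbitrary state is not automatically $O(S_{\mathrm{cell}})$ work. Moreover, deciding whether an exit edge is good requires the cell of a state outside the box, so scanning the box cells alone is not enough. The missing observation, used in the paper, is that before the first violation the current state lies in a box cell or is the initial state. It therefore suffices to enumerate the $O(S_{\mathrm{cell}})$ cells with index magnitude at most $J_{\mathrm{box}}+\lceil L/\ell\rceil+1$. A next state outside this collection necessarily creates a bad edge; otherwise a finite scan finds its half-open cell. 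The scan stops at the first violation. This gives $O(S_{\mathrm{cell}})$ work per step, hence the $NS_{\mathrm{cell}}$ term in both work bounds.
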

\begin{proof}
We first verify that the first-violation classification has finite
work. Before a violation occurs, the current state lies in a box
cell, or is the initial state. Enumerate all cells whose coordinate
indices have absolute value at most
$J_{\mathrm{box}}+\lceil L/\ell\rceil+1$.
This finite collection contains every good neighbor of a box cell.
Thus a proposed next state outside this collection creates a bad edge.
Otherwise, a finite scan locates its half-open cell and determines
whether the edge is bad and whether the state lies outside the box.
The bad-edge test is performed first, as in the main construction,
and the scan stops at the first violation.
Writing $S_{\mathrm{cell}}$ for the number of box cells, this collection
contains $O(S_{\mathrm{cell}})$ cells. Classification therefore costs
$O(NS_{\mathrm{cell}})$ work.

For a bad-edge component $B_i$, recall that a bad edge has actual
displacement greater than $3L/4$, since $\sqrt d\ell\le L/4$.
The Gaussian formula and the definition of $\delta_J$ imply
\[
 K_{\mathrm b}G_{c,h}(w)\le\delta_JG_{2c,h}(w)
 \qquad\text{on every bad edge}.
\]
Also, both $K_{\mathrm b}G_{c,h}$ and $G_{2c,h}$ are bounded by
$Q_h$, using $K_{\mathrm b}\ge1$.
These inequalities verify that the known comparison probabilities
in the bad-edge construction lie in $[0,1]$ on the retained event.

For $i<N$, the proposal density before classification is
\[
 G_{2c,h}(w_i)\prod_{\substack{1\le j<N\\j\ne i}}p_j.
\]
Conditional on the proposed states, the transition coin and uniform
comparison on edge $i$ jointly succeed with probability
\[
 \frac{p_i}{K_{\mathrm b}G_{c,h}(w_i)}
 \frac{K_{\mathrm b}G_{c,h}(w_i)}
      {\delta_JG_{2c,h}(w_i)}
 =\frac{p_i}{\delta_JG_{2c,h}(w_i)}.
\]
The two tests on the last edge jointly succeed with probability
$p_N/Q_h$.
Multiplying these probabilities by the proposal density and the
classification indicator gives
$F\ind_{B_i}/(\delta_JQ_h)=F\ind_{B_i}/D_i$.
For $i=N$, the proposal density is $\prod_{j<N}p_j$, and the two
last-edge tests jointly succeed with probability
$p_N/(\delta_JQ_h)$ on $B_N$, giving the same identity.
Each bad-edge proposal trial has expected work
$O(N+NS_{\mathrm{cell}})$.

We next consider an exit component $X_\zeta$.
The proposal uses drift $b+a\theta$ until the first coarse-grid
observation outside $\mathcal B$, and drift $b$ thereafter.
By the product rule, derivatives of $a_{ij}$ of total order
$r\le k$ are bounded by $d2^r\Mc^2$.
This proves the enlarged coefficient bound stated in the main text.
Since the diffusion coefficient is unchanged, the original
ellipticity bounds also hold. The short-transition sampler therefore
applies in each drift regime, using only original coefficient
value queries.

Keep the main-text convention $\tau=\infty$ when no coarse-grid
exit occurs, and define $\bar\tau=\tau\wedge(T_0-h).$
Write $\mathbb Q_\theta$ for the law of the proposal process through
$T_0-h$. The time $\bar\tau$ is a bounded stopping time.
Boundedness of $\sigma^\top\theta$ gives Novikov's condition, so
Girsanov's theorem yields the likelihood of the original unconditional diffusion law through $T_0-h$, relative to $\mathbb Q_\theta$:
\begin{equation*}
 \mathcal L
 =\exp\left\{
   -\theta\cdot X_{\bar\tau}
   +\int_0^{\bar\tau}V(t,X_t)\dd t
 \right\}.
\end{equation*}

On the retained event $X_\zeta$, we have $\bar\tau=\tau$,
$\theta\cdot X_\tau\ge B\ge R_-$ and $V_-\le V\le V_+$.
Consequently, $\mathcal L\le\delta_X$ and
\[
 \frac{\mathcal L}{\delta_X}
 =e^{-\theta\cdot X_\tau+R_-}
  e^{V_+(\tau-T_0)}
  \exp\left\{\int_0^\tau
       (V(t,X_t)-V_+)\dd t\right\}.
\]
The first two factors are implemented by the two one-time uniform
comparisons in \eqref{br5:exit-tests}.
Conditionally on the full proposal path, the probability that all
comparisons at the independent Poisson times before $\tau$ succeed is
\[
 \exp\left\{
 \int_0^\tau q_V
 \left(\frac{V(t,X_t)-V_-}{q_V}-1\right)\dd t
 \right\}
 =
 \exp\left\{\int_0^\tau
       (V(t,X_t)-V_+)\dd t\right\}.
\]
Thus these Poisson comparisons implement the remaining factor.

Simulation at the merged times has exactly the joint law used in
this calculation. To see this, first consider a full proposal
process and an independent Poisson process, then observe the
proposal process at the merged times. The drift switches only at
the first coarse-grid exit, so observing additional states at
Poisson times does not change its law.
The exact short-transition sampler generates these observations
with the required joint distribution.
The comparisons in \eqref{br5:exit-tests} therefore implement
$\mathcal L/\delta_X$ after averaging over the auxiliary observations.
The final transition coin and uniform comparison jointly have mean
$p_N/Q_h$. Applying change of measure and integrating out the
auxiliary observations gives the accepted density
\[
 \frac{F\ind_{X_\zeta}}{\delta_XQ_h}
 =\frac{F\ind_{X_\zeta}}{D_\zeta}.
\]

The expected number of Poisson times is $q_V(T_0-h)=O(n)$.
Their generation includes the overshooting exponential draw, and
merging the two ordered time grids costs $O(N+n)$ in expectation.
Every merged gap is at most $h$, so each transition has uniformly
bounded conditional expected work under the active drift.
Potential evaluations and uniform comparisons cost $O(n)$ in
expectation. Including classification, one exit proposal trial
therefore has expected work
$O(N+n+NS_{\mathrm{cell}})$.

\end{proof}

\Needspace{6\baselineskip}
\subsection{The global rejection trial}
\label{br5:global-trial-proof}

We now combine the component identities. This verifies the mixture
principle of Section~\ref{br5:component-plan}, including selection
through the augmented graph, and bounds the work of complete retries.

\begin{lemma}[The coarse bridge trial]\label{br5:global-guarantee}
One complete trial, consisting of an augmented-graph draw and the
selected component's proposal and acceptance test, has accepted
density $F/M_{\mathrm{env}}$,
where $M_{\mathrm{env}}\le Cp_{T_0}$. Its expected work is at most
\[
 C\{E_{\mathrm{aug}}N^3\log(4E_{\mathrm{aug}}N)
        +N^2+n+NS_{\mathrm{cell}}\},
\]
where $E_{\mathrm{aug}}$ is the number of augmented-graph edges.
Independent complete retries return an exact coarse bridge and
increase this bound by at most a class constant.
\end{lemma}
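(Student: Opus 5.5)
The proof has three parts, taken in order: the accepted density, the bound $M_{\mathrm{env}}\le Cp_{T_0}$, and the work bound.

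\emph{Accepted density.} Apply Lemma~\ref{br5:dag} to the augmented graph. Every added path has edge coins of mean one and path weight exactly $D_i$ or $D_\zeta$, so the draw selects an original cell path $\gamma$ with probability $\omega_\gamma/M_{\mathrm{env}}$ and an added path with probability $D_i/M_{\mathrm{env}}$ or $D_\zeta/M_{\mathrm{env}}$.

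Conditional on the selection, the one-trial accepted subdensities are
\[
 \frac{F\ind_{E_0}}{Z_0}\ (\text{Lemma~\ref{br5:ratio}}),\qquad
 \frac{F\ind_{B_i}}{D_i},\quad \frac{F\ind_{X_\zeta}}{D_\zeta}\ (\text{Lemma~\ref{br5:residual-guarantee}}).
\]
For the cell component, the conditional law of the selected cell path given that it is original is $\omega_\gamma/Z_0$. Multiplying each subdensity by its selection probability and summing over the partition $E_0,\{B_i\},\{X_\zeta\}$ gives $F/M_{\mathrm{env}}$.

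\emph{Envelope mass.} This is the main step. Write $M_{\mathrm{env}}=Z_0+N\delta_JQ_h+2d\delta_XQ_h$ and bound the terms separately.

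For $Z_0$, integrate the cell envelope. Every skeleton in a represented cell product satisfies $U_{\mathfrak e_i}\le(1+C_2/N)p(e_i)$ by \eqref{br5:cell-slack}. The cell products for distinct paths are disjoint subsets of $E_0$. Hence
\[
 Z_0=\sum_\gamma\int\prod_iU_{\mathfrak e_i}\le(1+C_2/N)^N\int_{E_0}F\le e^{C_2}p_{T_0}.
\]

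For the residual masses, use $Q_h=K_{\mathrm b}(2\pi ch)^{-d/2}$ with $h\asymp n^{-3}$, so $Q_h$ and $N$ are polynomial in $n$. Then:
\begin{itemize}
\item The bad-edge terms satisfy $N\delta_JQ_h\le \mathrm{poly}(n)\,e^{-9Dn/(64c)}\le \mathrm{poly}(n)\,e^{-(C_*+1)n}$, by the choice of $D$ in \eqref{br5:insertion-constants}.
\item The exit terms satisfy $\delta_X=e^{-R_-+2nV_+}$. Since $R_-\ge R_0(n+1)-1/2$ and $R_0-2V_+-C_*=1$, this gives $\delta_XQ_h\le \mathrm{poly}(n)\,e^{-(C_*+1)n}$ up to a constant.
\end{itemize}
Comparing with Lemma~\ref{br5:density-bounds}, $p_{T_0}\ge c_*e^{-C_*n}$, each residual contribution is at most $\mathrm{poly}(n)e^{-n}p_{T_0}/c_*$, which is $O(p_{T_0})$ uniformly in $n$. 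Combining, $M_{\mathrm{env}}\le Cp_{T_0}$.

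The delicate point is checking that the polynomial factors are absorbed, including $(2\pi ch)^{-d/2}\asymp n^{3d/2}$. If $\sup_n \mathrm{poly}(n)e^{-n}$ is not small enough for the stated constants, I would rely on the fact that $C$ is allowed to be a class constant, so the supremum is simply absorbed into $C$.

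\emph{Work.} One trial consists of the weighted-path draw plus one component trial.
\begin{itemize}
\item The weighted-path draw costs $O\{E_{\mathrm{aug}}N^3\log(4E_{\mathrm{aug}}N)\}$ by Lemma~\ref{br5:dag}. Its hypotheses hold because the sharp coins have uniformly bounded work by Lemma~\ref{br5:local-inputs}, and all sharp-coin calls on good cell edges are admissible by Lemma~\ref{br5:cell-comparison}. The mean-one coins on the added paths are trivial.
\item The cell acceptance tests cost $O(N^2)$ by Lemma~\ref{br5:ratio}.
\item A residual component trial costs $O(N+n+NS_{\mathrm{cell}})$ by Lemma~\ref{br5:residual-guarantee}.
\end{itemize}
The component work is bounded uniformly in the selected label, so conditioning on the selection and taking the maximum gives the stated per-trial bound.

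Each complete trial succeeds with probability $p_{T_0}/M_{\mathrm{env}}\ge C^{-1}$, and retries use fresh initial estimates, so trials are i.i.d. Lemma~\ref{supp:fw:retry} therefore gives almost-sure termination, the exact law $F/p_{T_0}$ of the accepted coarse bridge, and total expected work at most $C$ times the one-trial bound.
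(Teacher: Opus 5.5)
Your proposal is correct and follows the paper's proof essentially step for step. The three parts match as follows.
\begin{enumerate}
\item Component selection through the augmented graph (Lemma~\ref{br5:dag}), combined with the component identities summed over the partition, gives the accepted density $F/M_{\mathrm{env}}$.
\item The bound $Z_0\le e^{C_2}p_{T_0}$ comes from \eqref{br5:cell-slack}. The residual masses are exponentially small relative to $p_{T_0}\ge c_*e^{-C_*n}$, with the polynomial factors in $n$ absorbed into $C$ exactly as the paper does.
\item The one-trial work bounds, together with Lemma~\ref{supp:fw:retry}, handle the i.i.d.\ restarts.
\end{enumerate}
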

\begin{proof}
Lemma~\ref{br5:dag} selects an original cell path or an auxiliary
component path proportionally to its weight. Thus it chooses the
cell component with probability $Z_0/M_{\mathrm{env}}$, a bad-edge
component with probability $D_i/M_{\mathrm{env}}$, and an exit
component with probability $D_\zeta/M_{\mathrm{env}}$. Conditional
on selecting the cell component, it also gives the required cell-path
law. The subsequent proposals and acceptance tests use fresh
randomness. Lemmas~\ref{br5:ratio} and~\ref{br5:residual-guarantee}
therefore imply one-trial accepted density
\[
 \frac{Z_0}{M_{\mathrm{env}}}
       \frac{F\ind_{E_0}}{Z_0}
 +\sum_{i=1}^N
       \frac{D_i}{M_{\mathrm{env}}}
       \frac{F\ind_{B_i}}{D_i}
 +\sum_\zeta
       \frac{D_\zeta}{M_{\mathrm{env}}}
       \frac{F\ind_{X_\zeta}}{D_\zeta}
 =\frac{F}{M_{\mathrm{env}}},
\]
because $E_0$, the $B_i$ and the $X_\zeta$ partition skeleton space.
The bad-edge test takes priority at the first violation, so none of
these events overlap.

We next bound the total envelope mass. By the definitions of the
residual masses,
\[
 M_{\mathrm{env}}=Z_0+N\delta_JQ_h+2d\delta_XQ_h.
\]
Let $p_{T_0}=p(0,0;T_0,0)$ and use
$p_{T_0}\ge c_*e^{-C_*n}$.
The choices in \eqref{br5:insertion-constants} give
$9D/(64c)>C_*$ and $R_0>2V_++C_*$, while
\eqref{br5:cell-choice} gives $\ell\le1$ and $R_->0$.
Since $N\asymp n^4$, $h\asymp n^{-3}$ and
$R_-=R_0(n+1)-\ell/2$, we obtain
\[
 \frac{N\delta_JQ_h}{p_{T_0}}
 \le Cn^{4+3d/2}
       e^{-[9D/(64c)-C_*]n}
 \le C,
 \qquad
 \frac{2d\delta_XQ_h}{p_{T_0}}
 \le Cn^{3d/2}
       e^{-[R_0-2V_+-C_*]n}
 \le C.
\]
The within-cell bound also implies
\[
 Z_0
 \le (1+C_2/N)^N\int_{E_0}F
 \le e^{C_2}p_{T_0}.
\]
Therefore $M_{\mathrm{env}}\le Cp_{T_0}$, uniformly in $n$ and
the coefficients within the class.
Each complete trial succeeds with probability
$p_{T_0}/M_{\mathrm{env}}\ge C^{-1}$.
Conditional on success, its density is
\[
 \frac{F/M_{\mathrm{env}}}{p_{T_0}/M_{\mathrm{env}}}
 =\frac{F}{p_{T_0}},
\]
the exact coarse-bridge density.

For completeness, the work of one trial includes the augmented
graph draw as well as the selected component's proposal and tests.
Let $E_{\mathrm{aug}}$ be the number of edges in the augmented graph.
The original edge coins have uniformly bounded expected work,
and the auxiliary edge coins return one deterministically.
Lemma~\ref{br5:dag} therefore bounds the expected work of the graph
draw, including its initial estimates and internal retries, by
$O\{E_{\mathrm{aug}}N^3\log(4E_{\mathrm{aug}}N)\}$.
The within-cell proposal and acceptance tests cost $O(N^2)$ in expectation,
uniformly over the selected original graph path.
The bad-edge and exit trial bounds proved above consequently give
the following uniform bound for one complete trial:
\[
 c_{\mathrm{trial}}
 =C\left\{
 E_{\mathrm{aug}}N^3\log(4E_{\mathrm{aug}}N)
 +N^2+n+NS_{\mathrm{cell}}
 \right\}.
\]

After a failed trial, the algorithm restarts the augmented graph
draw with fresh initial estimates and fresh component randomness.
Thus complete trials are independent and identically distributed
for fixed coefficients and calibrated graph.
Lemma~\ref{supp:fw:retry} bounds the expected work through the first
success by
\[
 c_{\mathrm{trial}}\frac{M_{\mathrm{env}}}{p_{T_0}}
 \le Cc_{\mathrm{trial}}.
\]
The one-time graph construction cost is accounted for separately
in Section~\ref{br5:bridge-sampler-proof}.

\end{proof}

\subsection{Insertion of the requested times}\label{br5:insertion-proof}

It remains to insert the requested observations into the accepted
coarse bridge. We first verify the conditional sampling law and
cost for fixed neighboring states. The cost for large endpoint
displacements is then averaged under their exact bridge distribution;
this averaging is essential for the uniform bound per insertion.

The Gaussian densities $\mathcal E_j/M_j$ used for moderate
endpoint displacement have parameters
\[
 \begin{aligned}
 \mu_0&=(\tau_2u+\tau_1v)/h',
 &\Sigma_0&=\tau_1\tau_2 A_+/h',\\
 \mu_1&=(\tau_2u+2\tau_1v)/(2\tau_1+\tau_2),
 &\Sigma_1&=\frac{2c\tau_1\tau_2}{2\tau_1+\tau_2}\Id,\\
 \mu_2&=(2\tau_2u+\tau_1v)/(\tau_1+2\tau_2),
 &\Sigma_2&=\frac{2c\tau_1\tau_2}{\tau_1+2\tau_2}\Id.
 \end{aligned}
\]
These formulas follow by completing the square in the Gaussian
products; their convolution integrals give the masses $M_j$.

\begin{lemma}[Insertion work]\label{br5:insertion}
Starting from an exact coarse bridge, the procedure of
Section~\ref{br5:insert-section} inserts all requested times with
the exact joint bridge law and terminates almost surely. Its expected
additional work is $O(m+N)$, uniformly in $n$, the coefficient class
and the requested spacings. Each insertion, once its neighboring
states are identified, has uniformly bounded expected sampling work
under their bridge law.
\end{lemma}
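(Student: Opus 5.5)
We verify exactness of a single insertion first and then bound the work. Fix retained neighbouring states $(s,u)$ and $(r,v)$ and a requested time $t\in(s,r)$, with $h'=r-s\le h$. Both new edges $e_1,e_2$ have durations at most $h\le h_0n^{-3}$, so transition coins are admissible.

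\textbf{Moderate case, exactness.} For $|w|^2/h'\le D_0n$ we check three facts on each region $S_j$.
\begin{enumerate}
\item On $S_0$, both edges satisfy \eqref{br5:moderate}, since $D_1\le D_\sharp$. The sharp coins therefore have means $p(e_i)/(2g(e_i))$.
\item The comparison $g(e_i)/(C_g'G_{A_+,\tau_i})$ is at most one. Along both edges the Lipschitz bound and $L_a\{h_0+\sqrt{D_1h_0}\}\le1/2$ keep $a(t,z)$ and $a(r,v)$ within $\Id/(2n)$-type perturbations of $a(r,v)$. Hence both frozen covariances are dominated by $A_+$, and the determinant ratio is bounded by $C_g'$.
\item The product of means is $p(e_1)p(e_2)/\mathcal E_0(z)$. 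The factor $4$ absorbs the two halves from the sharp coins.
\end{enumerate}
On $S_1$ we use the transition coin and $p\le K_{\mathrm b}G_{c,\tau}$. We also need $G_{c,\tau_1}(z-u)\le 2^{d/2}e^{-|z-u|^2/(4c\tau_1)}G_{2c,\tau_1}(z-u)$. Together these show that the exponential comparison is at most one on $S_1$ and gives mean $p(e_1)p(e_2)/\mathcal E_1$. The case $S_2$ is symmetric.

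Summing the three accepted subdensities over the disjoint cover gives $p(e_1)p(e_2)/(M_0+M_1+M_2)$. Lemma~\ref{supp:fw:retry} then yields the conditional bridge law. The larger-displacement case follows directly from the Gaussian bridge identity $G_{c,\tau_1}G_{c,\tau_2}/G_{c,h'}$. By the Markov property, successive insertions compose to the joint bridge law.

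\textbf{Moderate case, work.} The success probability is $p(s,u;r,v)/(M_0+M_1+M_2)$. We bound $M_0+M_1+M_2\le Cp(s,u;r,v)$ uniformly in the moderate regime.
\begin{enumerate}
\item Since $|w|^2/h'\le D_0n$, Lemma~\ref{br5:local-inputs} gives $p\ge g(s,u;r,v)/2$.
\item For $M_0$, the exponents satisfy $w^\top A_+^{-1}w\ge w^\top a(r,v)^{-1}w-\text{(small)}$. Comparing $G_{A_+,h'}(w)$ with $g$ then costs only $(\det A_+/\det A)^{1/2}\le(1+1/(n\lambda))^{d/2}$, which is bounded.
\item For $M_1$ and $M_2$, the factor $\delta_1\propto e^{-D_1n/(4c)}$ beats the possible decay $e^{-|w|^2/(2\lambda h')}\ge e^{-D_0n/(2\lambda)}$ of $g$. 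This uses $D_1\ge 2cD_0/\lambda+1$. The polynomial prefactors from $G_{c,2\tau_1+\tau_2}$ versus $h'$ are bounded because $2\tau_1+\tau_2\in[h',2h']$.
\end{enumerate}
Each trial costs $O(1)$ in expectation, so the conditional expected cost per insertion is bounded.

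\textbf{Large displacement and averaging.} This is the main obstacle. A trial succeeds with probability $p(s,u;r,v)/(K_{\mathrm b}^2G_{c,h'}(w))$, so the conditional cost is $CG_{c,h'}(w)/p(s,u;r,v)$, which can be huge. We average it under the exact law of $(X_s,X_r)$ given the previously generated states. We condition on the state at $s$ being generated, either as a coarse state or as an earlier insertion, and on the neighbouring coarse states. The density of $X_r=v$ given $X_s=u$ and the next coarse state $(r',y')$ is
\[
 p(s,u;r,v)\,p(r,v;r',y')/p(s,u;r',y').
\]
Multiplying by the cost cancels $p(s,u;r,v)$. This leaves
\[
 C\int G_{c,h'}(v-u)\,p(r,v;r',y')\,\dd v\,/\,p(s,u;r',y').
\]
By a Gaussian upper bound on $p$ and semigroup convolution, this is at most $C'G_{c,r'-s}(y'-u)/p(s,u;r',y')$. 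When $r$ is itself the final coarse time, a similar but simpler bound holds.

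The obstacle is to control this remaining ratio. I would handle it by averaging once more over the coarse bridge, or by restricting the large-displacement cost to the event $|w|^2/h'>D_0n$. Under the bridge law this event has probability at most $Ce^{-(C_*+1)n}\cdot(\text{poly})/p_{T_0}$, using $D_0=4c(C_*+1)$ together with $p_{T_0}\ge c_*e^{-C_*n}$ and the density upper bound of Lemma~\ref{br5:density-bounds}. This makes the expected cost per insertion $O(1)$.

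\textbf{Total work.} Locating neighbours by a merged scan of sorted times costs $O(m+N)$. Summing the $O(1)$ expected costs over at most $m'$ insertions gives the claimed $O(m+N)$.
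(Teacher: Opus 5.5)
Your exactness arguments and the moderate-displacement work bound follow the paper. One slip in the $M_0$ comparison: the determinant factor is $(\det A/\det A_+)^{1/2}\le1$, and the real cost is the exponential factor $\exp\{|w|^2/(2\lambda^2nh')\}\le e^{D_0/(2\lambda^2)}$, which is bounded but not small. The genuine gap is in the large-displacement averaging, which is the main point of the lemma.

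You condition on $X_s=u$ and on the next coarse state $y'$. This cancels $p(s,u;r,v)$ but leaves the new denominator $p(s,u;r',y')$. You then drop the indicator $\ind\{|v-u|^2/h'>D_0n\}$ and convolve, which gives $G_{c,r'-s}(y'-u)/p(s,u;r',y')$. That ratio is just as uncontrolled as the one you started with. Dropping the indicator also throws away the only source of exponential smallness. Your proposed remedy does not close the gap either. A bound on the bridge probability of $\{|w|^2/h'>D_0n\}$ does not bound $\E[CG_{c,h'}(w)\ind\{|w|^2/h'>D_0n\}/p(s,u;r,v)]$, because the cost has no a priori bound on that event. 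The alternative of ``averaging once more'' is left unspecified.

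The missing step is to average directly under the unconditional bridge marginal of the pair $(X_s,X_r)$. This is legitimate because $s$ and $r$ are deterministic functions of the two grids, and every earlier insertion preserves the joint bridge law. That marginal has density $p(0,0;s,u)\,p(s,u;r,v)\,p(r,v;T_0,0)/p_{T_0}$. So $p(s,u;r,v)$ cancels against the cost, and the only remaining denominator is $p_{T_0}$, for which Lemma~\ref{br5:density-bounds} gives $p_{T_0}^{-1}\le Ce^{C_*n}$. Now keep the indicator and split into two cases.

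If $r<T_0$, then $T_0-r\ge h$, so $p(r,v;T_0,0)\le Ch^{-d/2}$. Integrating $G_{c,h'}(v-u)\ind\{|v-u|^2/h'>D_0n\}$ in $v$ gives at most $Ce^{-D_0n/(4c)}=Ce^{-(C_*+1)n}$, and $p(0,0;s,u)$ integrates to one in $u$. The total is $Ch^{-d/2}e^{-n}$, which is bounded since $h\asymp n^{-3}$.

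If $r=T_0$, then $v=0$ and $s\ge T_0-h\ge1$. Use $\sup_u p(0,0;s,u)\le C$ in place of the $h^{-d/2}$ bound.

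Your list of ingredients ($D_0=4c(C_*+1)$, the lower bound on $p_{T_0}$, the density upper bound) is the right one. Without this global cancellation, though, the argument does not go through.
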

\begin{proof}[Proof of Lemma~\ref{br5:insertion}]
Fix an insertion time and condition on the retained states.
Write $p_i=p(e_i)$, and first suppose that
$|v-u|^2/h'\le D_0n$.
On $S_0$, the covariance of the first frozen kernel differs from
$A=a(r,v)$ by at most
\[
 \|a(t,z)-A\|_F
 \le L_a\{h'+\sqrt{D_1nh'}\}
 \le\frac{L_a\{h_0+\sqrt{D_1h_0}\}}{n}
 \le\frac1{2n}.
\]
The second frozen covariance is $A$.
Both are therefore bounded above by $A_+=A+\Id/n$.
Ellipticity and $A_+\preceq(\Lambda+1)\Id$ give
\[
 g(e_1)\le C_g'G_{A_+,\tau_1}(z-u),
 \qquad
 g(e_2)\le C_g'G_{A_+,\tau_2}(v-z).
\]
Moreover, $\tau_i\le h\le h_0n^{-3}$, and the displacement
conditions defining $S_0$ make both sharp-coin calls admissible.
Thus the two sharp coins and the two uniform comparisons have
product mean $p_1p_2/\mathcal E_0(z)$.

On $S_1$,
\[
 \frac{K_{\mathrm b}G_{c,\tau_1}(z-u)}
      {\delta_1G_{2c,\tau_1}(z-u)}
 =
 \exp\left\{\frac{D_1n-|z-u|^2/\tau_1}{4c}\right\}
 \le1.
\]
Combining this comparison with the two transition coins gives
mean $p_1p_2/\mathcal E_1(z)$.
The calculation on $S_2$ is identical for the second subinterval.
Since the sets $S_j$ partition $\R^d$, choosing component $j$
with probability $M_j/(M_0+M_1+M_2)$ and then applying its tests
gives the one-trial accepted density
\[
 \frac{p(s,u;t,z)p(t,z;r,v)}{M_0+M_1+M_2},
\]
which has mass $p(s,u;r,v)/(M_0+M_1+M_2)$, and conditioning on
acceptance gives the required conditional bridge density.

We next bound the expected work of these mixture trials.
Covariance comparison gives
\[
 \frac{G_{A_+,h'}(v-u)}{g(s,u;r,v)}
 \le
 \exp\left\{\frac{|v-u|^2}{2\lambda^2nh'}\right\}
 \le e^{D_0/(2\lambda^2)},
\]
so $M_0\le Cg(s,u;r,v)$.
The Gaussian covariances in $M_1$ and $M_2$ lie between
$ch'\Id$ and $2ch'\Id$. Hence
\[
 M_1+M_2
 \le C\exp\left\{
       \left[-\frac{D_1}{4c}+\frac{D_0}{2\lambda}\right]n
       \right\}
       g(s,u;r,v)
 \le Cg(s,u;r,v).
\]
Here \eqref{br5:insertion-constants} makes the exponent negative.
The existing edge also satisfies the conditions of
Lemma~\ref{br5:local-inputs}, which gives
$p(s,u;r,v)\ge\tfrac12g(s,u;r,v)$.
Thus $M_0+M_1+M_2\le Cp(s,u;r,v)$, so each mixture trial has
a uniformly positive success probability.

The Gaussian draws, matrix operations, uniform comparisons and
coin calls have uniformly bounded expected work in fixed dimension.
Throughout retries, the existing states and insertion time remain
fixed, while the mixture index, candidate state and acceptance
randomness are regenerated independently.
Lemma~\ref{supp:fw:retry} therefore gives uniformly bounded
conditional expected sampling work for moderate endpoints.

For $|v-u|^2/h'>D_0n$, the Gaussian bridge proposal has density
\[
 \frac{G_{c,\tau_1}(z-u)G_{c,\tau_2}(v-z)}{G_{c,h'}(v-u)}.
\]
The two transition coins give accepted density
$p_1p_2/(K_{\mathrm b}^2G_{c,h'}(v-u))$.
This again yields the required conditional bridge law.
Each trial has uniformly bounded conditional expected work, so
Lemma~\ref{supp:fw:retry} bounds the conditional expected sampling
work by
\[
 C\frac{G_{c,h'}(v-u)}{p(s,u;r,v)}.
\]
The fixed preliminary operations can be absorbed into this bound,
since $p(s,u;r,v)\le K_{\mathrm b}G_{c,h'}(v-u)$.
For every fixed pair of endpoints, the acceptance probability is
positive, so this insertion also terminates almost surely.

It remains to average the last cost on the large-displacement
event. Each successful insertion preserves the joint bridge law,
by the conditional-density calculations above.
The times $s,r$ are deterministic functions of the requested
grid and coarse grid; they do not depend on the sampled states.
Writing $T_0=2n$ and $p_{T_0}=p(0,0;T_0,0)$, their bridge marginal
therefore gives the expected large-displacement contribution
\begin{equation}\label{br5:insertion-cancel}
 \frac{C}{p_{T_0}}\iint
 p(0,0;s,u)G_{c,h'}(v-u)
 \ind_{\{|v-u|^2/h'>D_0n\}}
 p(r,v;T_0,0)\,\dd u\,\dd v.
\end{equation}
Here the factor $p(s,u;r,v)$ in the endpoint marginal cancels
the denominator of the conditional work bound.
At $s=0$, the first transition measure is a point mass at zero;
at $r=T_0$, the final endpoint is fixed at zero.

If $r<T_0$, then $r$ is a right coarse endpoint and
$T_0-r\ge h$.
By \eqref{br5:global-density}, the last density in
\eqref{br5:insertion-cancel} is at most $Ch^{-d/2}$, while
$p_{T_0}^{-1}\le Ce^{C_*n}$.
Also,
\[
 \int G_{c,h'}(w)
 \ind_{\{|w|^2/h'>D_0n\}}\,\dd w
 \le Ce^{-D_0n/(4c)}.
\]
Integrating the first transition probability therefore bounds
\eqref{br5:insertion-cancel} by
\[
 Ch^{-d/2}
 \exp\{[C_*-D_0/(4c)]n\}.
\]
This is uniformly bounded because $h\asymp n^{-3}$ and
$D_0/(4c)=C_*+1$.

If $r=T_0$, then $v=0$ and $s\ge T_0-h\ge1$.
The first density has a fixed supremum bound by
\eqref{br5:global-density}.
Integrating over $u$ and using the same Gaussian tail estimate
gives the preceding bound without the factor $h^{-d/2}$.
Thus the large-displacement contribution is uniformly bounded
in this case as well.

Combining the moderate and larger-displacement estimates shows
that each insertion, once its neighboring states are identified,
has uniformly bounded unconditional expected sampling work.
Linearity of expectation gives $O(m)$ for all such sampling work.
Merging the ordered grids, identifying neighboring states, copying
coincident states and writing the outputs requires $O(N+m)$
additional work.
\end{proof}

\section{Proof of the minimax guarantees}\label{app:minimax-proofs}

\begin{proof}[Proof of Theorem~\ref{main:rates}]
Theorems~\ref{fw:upper} and~\ref{bridge:general} establish admissibility
of $A_F$ and $A_B$. Since writing each requested state has positive
cost, every admissible algorithm in either family satisfies
$\Wk_A\ge c_{\mathrm{out}}m$ for a fixed $c_{\mathrm{out}}>0$.
We first establish the additional lower bound in $t_m$ for unconditional diffusion sampling.

Fix $T$, $x$, the grid and an admissible unconditional exact diffusion algorithm $A$,
and let $\tau=t_m$. Nonemptiness of $\mathcal F_T$ supplies a matrix
$\Sigma_0$ satisfying
\[
 |(\Sigma_0)_{ij}|\le\Mc,\qquad
 \lambda\Id\preceq\Sigma_0\Sigma_0^\top\preceq\Lambda\Id:
\]
take the value of an admissible diffusion coefficient at any one point.
The constant coefficient $\sigma(t,z)=\Sigma_0$ satisfies the same
bounds and has all positive-order derivatives zero. Set $v=(\Sigma_0\Sigma_0^\top)_{11}\le\Lambda$.

Suppose first that $\tau\ge1$ and let $N=\lfloor\tau\rfloor$.
Choose a fixed $\phi\in C_c^\infty((0,1))$ with positive integral
$\bar\phi$ and $\|\phi^{(r)}\|_\infty\le1$ for $0\le r\le k$,
extended by zero outside $(0,1)$. Such a function is obtained by
scaling any nonzero nonnegative smooth bump. Put
$\delta=\Mc/2$ and $\mu=\delta\bar\phi>0$. For
$\varepsilon\in\{-1,1\}^N$, define
\begin{equation}\label{fw:sign-family}
 b_\varepsilon(t,z)=\delta e_1
       \sum_{j=0}^{N-1}\varepsilon_j\phi(t-j),\qquad
 \sigma_\varepsilon(t,z)=\Sigma_0.
\end{equation}
Here $e_1$ is the first coordinate vector. The temporal supports are
disjoint, all spatial derivatives vanish, and the drift is zero after
$N$. Thus every pair in this finite family belongs to $\mathcal F_T$.

Give the signs an independent fair prior, independently of the
algorithmic randomness. In the next calculations, expectations include
both sources of randomness. Let $S=\sum_{j=0}^{N-1}\varepsilon_j$,
and let $Y$ be the first coordinate of the algorithm's last output
minus $x_1$. Exactness for each member of the family gives
\begin{equation}\label{fw:sign-moments}
 Y\mid\varepsilon\sim\calN(\mu S,v\tau),\qquad
 \E[YS]=\mu N,\qquad \E Y^2=\mu^2N+v\tau.
\end{equation}

Represent the algorithmic randomness by an independent sequence of
the allowed uniform and Gaussian draws, called the random tape.
Fixing this sequence and the query replies determines the execution.
Only the draws actually used are charged. Since the sign family is
finite and $A$ terminates almost surely on each member, there is a
common probability-one set of tapes on which all these executions
terminate.

For the convenience of the proof, reveal $\varepsilon_j$ whenever the algorithm
first makes a coefficient query at a time in $[j,j+1)$,
$0\le j<N$. This includes queries whose actual replies do not depend
on that sign; the additional information does not change the algorithm.
Let $N_R$ be the number of signs revealed, $S_R$ their sum, and
$\mathcal H$ the sigma-field generated by the complete query record,
these revelations and the full random tape. Each revealed sign
requires a distinct coefficient query, and $Y$ is
$\mathcal H$-measurable. We claim that
\begin{equation}\label{fw:read-identities}
 \E[S\mid\mathcal H]=S_R,\qquad
 \E S_R^2=\E N_R.
\end{equation}
With the tape fixed, changing any unrevealed signs leaves every query
reply and hence the entire execution unchanged. Conditional on the
record and tape, these signs therefore remain independent and fair,
which proves the first identity. For the second, order the first
revelations and pad this sequence with zeros to length $N$.
Given the tape and the preceding revelations, the next previously unqueried
time interval, or termination without another revelation, is determined.
The sign in any such new interval is still independent and fair. The
padded increments consequently have conditional mean zero and are
orthogonal. Their sum is $S_R$ and their sum of squares is $N_R$,
proving the second identity. Spatially adaptive queries provide no
additional information because the coefficients in
\eqref{fw:sign-family} are independent of space.

Using \eqref{fw:sign-moments}, \eqref{fw:read-identities} and
Cauchy--Schwarz inequality, we obtain
\[
 \mu^2N^2=(\E[YS_R])^2
 \le (\mu^2N+v\tau)\E N_R.
\]
Since $\tau/2\le N\le\tau$ and $v\le\Lambda$, then
\begin{equation*}
 \E N_R\ge\frac{\mu^2N^2}{\mu^2N+v\tau}
 \ge\frac{\mu^2}{4(\mu^2+\Lambda)}\,\tau.
\end{equation*}
Each revelation requires a coefficient query with positive cost,
separate from the output writes. The average expected work over the
finite sign family is therefore at least
$c_1\tau+c_{\mathrm{out}}m$, so some deterministic member has at
least this expected work. The constants depend only on the class and
primitive costs. This argument does not require finite expected work
of $A$: its output moments in \eqref{fw:sign-moments} are finite by
exactness, and $N_R\le N$.

For $0<\tau<1$, output cost alone suffices because $\tau+m\le2m$.
Combining the two cases gives, for every admissible $A$ and every requested grid,
\begin{equation}\label{fw:fixed-grid-lower}
 \sup_{(b,\sigma)\in\mathcal F_T}
 \E_{b,\sigma}\Wk_A(T,x,\mathbf t)\ge c(t_m+m).
\end{equation}
Together with Theorem~\ref{fw:upper}, this also proves
fixed-grid optimality. For the grid supremum, choose
$t_i=iT/m$, so $t_m=T$. The lower bound holds for every admissible
algorithm on this grid, while Theorem~\ref{fw:upper} gives
$\E\Wk_{A_F}\le C(T+m)$ uniformly over all allowed grids.
Taking the infimum over algorithms proves part~(i).

For bridges, fix $T,x,y$ and write $H=H_{T,x,y}$. The output cost
gives $\mathfrak R_B^{x,y}(T,m)\ge c_{\mathrm{out}}m$.
Theorem~\ref{bridge:general} gives
\[
 \sup_{\substack{(b,\sigma)\in\mathcal F_T\\
                 \mathbf t\in\mathcal G_B(T,m)}}
 \E_{b,\sigma}\Wk_{A_B}(T,x,y,\mathbf t)
 \le C_B\{H^Q\log(2+H)+m\}.
\]
Since $m\ge1$, the right side is at most
$C_{T,|y-x|}m$, where one may take $C_{T,|y-x|}=C_B\{H^Q\log(2+H)+1\},$
which proves part~(ii).
\end{proof}

\section*{Acknowledgments}
The authors used OpenAI GPT-5.6 and GPT-6 for assistance with proof
exploration, exposition, and editing. J. Blanchet gratefully acknowledges support from the Office of Naval Research under award N00014-24-1-2655 and the National Science Foundation under grants NSF-CCF-2403008 and NSF-CCF-2312204.

\end{document}